\date{\today}
\newcommand{\vbar}{\overline{v}}
\newcommand{\myaddress}[1]{\indent {\sc #1}\par}
\newcommand{\myemail}[1]{\indent \emph{E-mail:} {\tt #1}}
\newcommand{\bX}{{\mathbf X}}
\newcommand{\R}{{\mathbb R}}
\newcommand{\Z}{{\mathbb Z}}
\newcommand{\C}{{\mathbb C}}
\newcommand{\G}{{\mathbb G}}
\newcommand{\T}{{\mathbb T}}
\newcommand{\Q}{{\mathbb{Q}}}
\newcommand{\I}{{\mathbb{I}}}
\newcommand{\A}{{\mathbb{A}}}
\newcommand{\PP}{{\mathbb{P}}}
\newcommand{\LL}{{\mathcal L}}
\newcommand{\cX}{{\mathcal X}}
\newcommand{\ix}{\cX}
\newcommand{\sX}{{\mathscr{X}}}
\newcommand{\one}{{\mathbf 1}}
\newcommand{\stSig}{{\boldsymbol{\Sigma}}}
\newcommand{\stsig}{{\boldsymbol{\sigma}}}
\newcommand{\dfr}{\displaystyle\frac}
\newcommand{\dsp}{\displaystyle}
\DeclareMathOperator{\Cir}{Cir}
\DeclareMathOperator{\eu}{eu}
\DeclareMathOperator{\BR}{BR}
\DeclareMathOperator{\CR}{CR}
\DeclareMathOperator{\SR}{SR}
\DeclareMathOperator{\Hom}{Hom}
\DeclareMathOperator{\Log}{Log}
\DeclareMathOperator{\Pic}{Pic}
\DeclareMathOperator{\Lie}{Lie}
\DeclareMathOperator{\Spec}{Spec}
\DeclareMathOperator{\Boxx}{Box}
\DeclareMathOperator{\Starr}{Star}
\DeclareMathOperator{\Tw}{Tw}
\DeclareMathOperator{\rank}{rank}
\newcommand{\bite}{\begin{itemize}}
\newcommand{\eite}{\end{itemize}}
\newcommand{\benu}{\begin{enumerate}}
\newcommand{\eenu}{\end{enumerate}}
\newcommand{\st}{\,|\,}
\newcommand{\bv}[1]{\text{\textbf{#1}}}
\newcommand{\vr}[1]{\left<#1\right>}
\newcommand{\modd}[1]{\mbox{ (mod }#1\mbox{)}}
\newcommand{\beq}{\begin{equation*}}
\newcommand{\eeq}{\end{equation*}}
\newcommand{\ol}{\overline}
\let\geq\geqslant
\let\leq\leqslant
\def\theequation{\@arabic\c@equation}
\numberwithin{equation}{section}
\renewcommand{\det}{\operatorname{det}}
\theoremstyle{plain}
\newtheorem{theorem}{Theorem}[section]
\newtheorem{lemma}[theorem]{Lemma}
\newtheorem{corollary}[theorem]{Corollary}
\newtheorem{proposition}[theorem]{Proposition}
\theoremstyle{definition}
\newtheorem{definition}[theorem]{Definition}
\newtheorem{example}[theorem]{Example}
\newtheorem{remark}[theorem]{Remark}
\begin{document}
\title[Inertrial Chow rings of toric stacks]{Inertial Chow rings of toric stacks}
\author{Thomas Coleman and Dan Edidin}
\thanks{Research of the second author partially supported by Simons Collaboration Grant 315460}
\begin{abstract}
  For any vector bundle $V$ on a toric Deligne-Mumford stack $\ix$ the
  formalism of \cite{EJK:16} defines two intertial products
  $\star_{V^{+}}$ and $\star_{V^{-}}$ on the Chow group  
of the inertia stack. We give an explicit presentation for the integral 
$\star_{V^+}$ and
  $\star_{V^-}$ Chow rings, extending earlier work
  of Boris-Chen-Smith \cite{BCS:05} and Jiang-Tsen \cite{JiTs:10} in
  the orbifold Chow ring case, which corresponds to $V = 0$.

We also describe an {\em asymptotic} product on the rational Chow group of the inertia stack obtained by letting the rank of the bundle $V$ go to infinity.

\end{abstract}
\maketitle
\section{Introduction}
In their landmark paper, Borisov, Chen and Smith \cite{BCS:05} defined
a {\em toric Deligne-Mumford (DM) stack} $\ix(\stSig)$ 
associated to a stacky fan $\stSig= (N, \Sigma, \beta)$. Here $N$ is a finitely generated abelian group,
$\Sigma$ is a simplicial fan in $N\otimes _\Z \Q$ with $n$-rays and
$\beta \colon \Z^n \to N$ is a homormorphism such that the image of
the standard basis of $\Z^n$ generates the rays of $\Sigma$. 
Having defined toric DM stacks, the authors gave a beautiful
presentation of the orbifold Chow ring of a toric DM stack $\ix(\stSig)$. 
The orbifold Chow ring was defined by Abramovich, Graber and Vistoli \cite{AGV:08} and is an algebraic analogue
of Chen-Ruan orbifold cohomology defined in \cite{ChRu:04}. 
Precisely, they prove that if $\ix(\stSig)$ is a toric DM stack with projective
coarse moduli space $X(\Sigma)$, the rational orbifold Chow ring is a quotient of the {\em deformed group algebra}
$\Q[N]^{\stSig}$ by a linear ideal.
In subsequent work, Jiang and Tseng \cite{JiTs:10} used Iwanari's calculation 
\cite{Iwa:09} of
the integral Chow ring of a toric DM stack to give a presentation for the integral orbifold Chow ring.

In this paper we turn our attention to describing
general {\em intertial products} on toric DM
stacks. The formalism of inertial products was introducted in
\cite{EJK:10} and further developed in \cite{EJK:16}. In the latter paper the
authors show that if $\ix$ is a smooth DM stack then any vector bundle
$V$ on $\ix$ defines two inertial products $\star_{V^+}$ and
$\star_{V^-}$ on the Chow groups of the inertia stack $I\ix$. When $V$
is trival the two products agree with the usual orbifold product. When
when $V = \T$ is the tangent bundle then $\star_{\T^-}$ is the
virtual product defined in \cite{GLSUX:07}.  Our main result is an
explicit presentation for the $\star_{V^+}$ and $\star_{V^{-}}$
inertial products for large class of vector bundles on a toric DM
stack $\ix(\stSig)$.

To do this we prove the following general result about inertial Chow rings 
 of toric DM stacks.
	
\begin{theorem} \label{thm.main}
  Let $\cX(\stSig)$ be a toric Deligne-Mumford stack 
associated to a stacky fan $\stSig = (N, \Sigma, \beta)$ such that
$\Sigma$ spans $N_\R$ and the image of $\beta$ generates $N_{tors}$.
Then if $\star$ is an
  inertial product on $\cX(\stSig)$, then there is an isomorphism of
  graded rings 
\begin{equation} A^*(\cX(\stSig),\star,\Z)\cong
  \dfr{R_\stSig}{\CR(\stSig)+\BR(\star,\stSig)}.
 \end{equation}
where $R_\stSig$ is an algebra over the {\em stacky
  Stanley-Reisner} ring $S_{\stSig}$ whose generators correspond to
the twisted sectors of $\cX(\stSig)$. The ring $R_\stSig$ and the
ideal $\CR(\stSig)$ can be written explicitly in terms of the
combinatorics of the stacky fan, but the box relations ideal
$\BR(\star,\stSig)$ depends also on the choice of inertial product
$\star$.
\end{theorem}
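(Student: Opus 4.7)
The plan is to reduce the statement to a synthesis of three independently understood pieces: (i) the additive decomposition of the Chow group of the inertia stack indexed by box elements of $\stSig$, (ii) the Jiang--Tseng/Iwanari presentation of the integral Chow ring of each twisted sector (which is itself a toric DM stack), and (iii) the explicit structure of the inertial product $\star$ as developed in \cite{EJK:16}, which on the level of classes pulled back to inertia can be expressed as an ordinary product twisted by an Euler-type class associated to $V$.

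Concretely, I would first recall from \cite{BCS:05} that the inertia stack $I\cX(\stSig)$ decomposes as a disjoint union $\coprod_{v\in \Boxx(\stSig)} \cX(\stSig_v)$, where each $\stSig_v$ is the stacky substack fan cut out by the minimal cone containing $v$. This gives the additive decomposition
\begin{equation*}
A^*(I\cX(\stSig),\Z) \;=\; \bigoplus_{v\in \Boxx(\stSig)} A^*(\cX(\stSig_v),\Z).
\end{equation*}
Applying Iwanari's presentation \cite{Iwa:09} sector by sector, each summand is a quotient of the Stanley--Reisner-type polynomial ring attached to $\stSig_v$ by a linear ideal encoding the characters of $N$. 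I would then build $R_\stSig$ as an $S_\stSig$-algebra by taking one generator $y_v$ for each $v\in \Boxx(\stSig)$ (matching the twisted sectors) together with the generators of $S_\stSig$ itself, arranged so that the $v$-graded piece, viewed modulo generators indexed by $v'\neq v$, reproduces the Stanley--Reisner ring for $\stSig_v$. The ideal $\CR(\stSig)$ is then built from (a) the Stanley--Reisner relations for each $\stSig_v$ and (b) the linear character relations, both of which translate directly out of the Jiang--Tseng presentation.

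The genuinely new content lies in identifying the ring structure. On $A^*(I\cX(\stSig))$ the product $\star$ is given, for $\alpha\in A^*(\cX(\stSig_g))$ and $\beta\in A^*(\cX(\stSig_h))$, by pulling both classes back to $\cX(\stSig_{g,h}):= \cX(\stSig_g)\times_{\cX(\stSig)}\cX(\stSig_h)$, multiplying by an Euler/obstruction class $\eu(\star,g,h)$ constructed from the bundle $V$ via the recipe of \cite{EJK:16}, and pushing forward to $\cX(\stSig_{gh})$. For toric stacks the representations of the isotropy groups split into characters indexed by rays, so this Euler class becomes an explicit monomial in the ray generators of $S_\stSig$ with exponents read off from fractional ages and the weights of $V$. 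The box relations ideal $\BR(\star,\stSig)$ is precisely the ideal generated by the identities
\begin{equation*}
y_g \cdot y_h \;-\; \eu(\star,g,h)\cdot y_{g+h}
\end{equation*}
for all compatible pairs $(g,h)$, together with the orthogonality relations $y_g\cdot y_h = 0$ when $g$ and $h$ do not lie in a common cone.

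The main obstacle is verifying that the monomial expression for $\eu(\star,g,h)$ derived from the formalism of \cite{EJK:16} indeed matches the pushforward-pullback formula under the generator-by-generator identification; equivalently, one must check that the multiplication on $R_\stSig/(\CR+\BR)$ is well-defined and associative, and that the evident surjection onto $A^*(\cX(\stSig),\star,\Z)$ is an isomorphism by comparing Hilbert series sector by sector. Associativity will follow formally from the general theory of inertial products, but producing the monomial Euler class in a form that is simultaneously combinatorially transparent and valid over $\Z$ (rather than only $\Q$, where fractional ages are unproblematic) is the delicate step and is where the hypothesis on the image of $\beta$ generating $N_{tors}$ enters.
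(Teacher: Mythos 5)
Your proposal follows essentially the same route as the paper: decompose $A^*(I\cX(\stSig))$ additively over $\Boxx(\stSig)$ with each sector a cyclic module over the stacky Stanley--Reisner ring, impose the vanishing $y^{v_1}\star y^{v_2}=0$ when $v_1,v_2$ share no cone, and identify $y^{v_1}\star y^{v_2}$ via the pullback--multiply--pushforward description of $\star$ together with the projection formula, with the obstruction/Euler class computed from the logarithmic trace as a monomial in the $\widetilde{x}_i$. The only cosmetic differences are that you fold the orthogonality relations into $\BR$ rather than $\CR$ and invoke a Hilbert-series comparison that the paper's module-level isomorphism already renders unnecessary.
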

In Proposition \ref{prop:br-v-plus-minus} we give an explicit description
of the ideals $\BR(\star_{V^{-}},\stSig)$ and $\BR(\star_{V^{+}},\stSig)$
when $V = \sum a_i {\mathcal L}_i$ where ${\mathcal L}_i$ are standard line bundles on $\ix(\stSig)$. As a corollary we obtain an explicit presentation for the virtual product.

Taking a limit over certain inertial products allows us to define (Theorem \ref{thm:asymptotic-vanishing}) two new associative products on the rational Chow groups of
$I\ix$ which we call the $\star_{+\infty}$ and $\star_{-\infty}$ products.

\begin{remark}
This article is based in large part on the first author's PhD thesis \cite{Col:16}
\end{remark}

\subsection{Outline of the paper}
In Section \ref{sec:equivChow} we begin by recalling the equivariant
intersection theory of \cite{EdGr:98}. We then show how the
equivariant projection formula can be used to compute the equivariant
Chow rings of stacks of the form $[Z/G]$ where $Z$ is the complement
of a union of a linear subspaces in a representation of a
diagonalizable group $G$ (Propositions \ref{prop.linearcomp.faithful},
\ref{prop.linearcomp.isogeny}). Since all toric stacks are naturally
expressed as quotients of this form, these results play an important
role in our calculations.

In Section \ref{sec:inertialproducts} we recall the formalism of
inertial products of \cite{EJK:10, EJK:16}. We also recall the
logarithmic trace map which is used to define the $\star_{V^+}$ and
$\star_{V^-}$ products. In Section \ref{ssec:fans,stacks} we recall
the construction of toric DM stacks and use the results of Section
\ref{sec:equivChow} to give a presentation for the ordinary Chow ring
of a toric DM stack (Propositions \ref{prop:Iwa} and \ref{prop:JiangTseng})
thereby giving simpler proofs of earlier results of Iwanari
\cite{Iwa:09} and Jiang-Tseng \cite{JiTs:10}.

In Section \ref{sec:toricinertia} we recall Borisov-Chen-Smith's description of the inertia of a toric DM stack $\ix(\stSig)$ in terms of the {\em box}
of the stacky fan $\stSig$. In the next section compute the logarithmic traces of the standard line bundles ${\mathcal L}_i$ in terms of the box.

Having laid the necessary foundation we state and prove our main results in the final two sections. In Section \ref{ssec:chow-results} we give our presentations for inertial Chow rings and in Section \ref{sec:asymptotic} we describe our new asymptotic product.

\section{Background material on equivariant Chow groups} \label{sec:equivChow}
Equivariant Chow groups were defined in \cite{EdGr:98}. We briefly
recall their definition and refer the reader to \cite{EdGr:98} for
more details.  If $G$ is a linear algebraic group acting on a scheme
or, more generally, algebraic space, we denote by $A^i_G(X)$ the
``codimension-$i$''equivariant Chow group. It is defined as the
codimension-$i$ Chow group of the space $X \times_G U$ where $U$ is an
open set in a representation $V$ on which $G$ acts freely and such
that $V \smallsetminus U$ has codimension more than $i$. It is proved
in \cite{EdGr:98} that this definition is independent of the choice
representation $V$ or open set $U$. Moreover, the equivariant
Chow group $A^k_G(X)$ is an invariant of the quotient stack $[X/G]$
and we use this as our definition of Chow groups of quotient stacks.

When $X$ is smooth, the spaces $X \times_G U$ are also smooth and we
can define an intersection product on equivariant Chow groups. In this
case we obtain a graded ring $A^*_G(X) = \oplus_{k =0}^\infty A^k_G(X)$
which we call the {\em equivariant Chow ring of $X$}. 

By their definition, equivariant Chow groups enjoy the same formal
properties as ordinary Chow groups. For example, there are
pushforwards for proper equivariant morphisms and pullbacks for flat
and lci morphisms, which are related by a projection
formula. Equivariant vector bundles define Chern classes which are
operations on the equivariant Chow groups.

An important formula for
this paper is the equivariant self-intersection formula, which states
that if $Y \stackrel{i} \hookrightarrow X$ is a regular embedding and
$\alpha \in A^*_G(Y)$ then $$i^* i_*\alpha = c_{top}(N_{Y/X}) \cap
\alpha.$$

If $V$ is an $r$-dimensional
representation of $G$ then $V$ defines an equivariant vector
bundle over a point and so we have corresponding Chern class $c_1(V), \ldots , c_r(V) \in A^*_G(pt)$. If $X$ is any $G$-space then we can pull these Chern classes back and obtain operations on $A^*_G(X)$. By abuse of notation we will also denote them as $c_i(V)$.

If $V$ is a representation of $G$ and $W \subset V$ is a $G$-invariant 
linear subspace then the normal bundle  to $W$ in $V$ 
is the bundle $W \times V/W$. In this case the projection formula reads
$$i^* i_* \alpha = c_{top}(V/W) \cap \alpha.$$

\subsection{Equivariant Chow groups for actions of diagonalizable groups}
All toric stacks have presentations as $[Z/G]$ where $G$ is a
diagonalizable group, and $Z$ is the complement of the union of a
finite collection $L_1, \ldots L_m$ of $G$-invariant linear 
subspaces in a representation $V$ of
$G$. In this section state and prove some propositions which give a
method for computing $A^*_G(Z)$ in terms of the
representation-theoretic data.
	
Precisely, let $\bX = {\bf X}(G)$ be the character group of $G$
and $A^*_G = \Z[\bX]$ be the algebra generated by this group.
By \cite{EdGr:98} we can identify this with the equivariant 
Chow ring $A^*_G(pt)$. 

\begin{proposition} \label{prop.linearcomp}
If each $L_i$ has codimension $k_i$ then
$$A^*_G(Z) = A^*_G/ \left(c_{k_1}(V/L_1), \ldots c_{k_m}(V/L_m)\right).$$
\end{proposition}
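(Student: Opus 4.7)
The plan is to compute $A^*_G(Z)$ by peeling off the linear subspaces $L_i$ one at a time via the excision sequence, using the fact that $G$-equivariantly each $L_i$ and $V$ are vector bundles over a point, so their equivariant Chow rings are canonically identified with $A^*_G$ via homotopy invariance. The payoff of this setup is that the self-intersection formula cited just above the proposition computes the pushforward along $L_i \hookrightarrow V$ very explicitly.

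\textbf{Single subspace case.} First I would handle $m=1$ to illustrate the mechanism. Apply equivariant excision to the open inclusion $V \smallsetminus L_1 \hookrightarrow V$ to obtain
\begin{equation*}
A^*_G(L_1) \xrightarrow{i_*} A^*_G(V) \to A^*_G(V \smallsetminus L_1) \to 0.
\end{equation*}
Homotopy invariance identifies both $A^*_G(L_1)$ and $A^*_G(V)$ with $A^*_G = \Z[\bX]$, and under this identification the pullback $i^*$ is the identity. The self-intersection formula gives $i^* i_* \alpha = c_{k_1}(V/L_1) \cdot \alpha$, so because $i^*$ is invertible we conclude $i_*\alpha = c_{k_1}(V/L_1) \cdot \alpha$. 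Hence the image of $i_*$ is the principal ideal $(c_{k_1}(V/L_1))$ and the case $m=1$ is settled.

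\textbf{Induction on $m$.} Set $Z_j = V \smallsetminus (L_1 \cup \cdots \cup L_j)$, and assume inductively that $A^*_G(Z_{j-1}) \cong A^*_G / (c_{k_1}(V/L_1), \ldots, c_{k_{j-1}}(V/L_{j-1}))$. The Cartesian square
\begin{equation*}
\begin{array}{ccc}
L_j \cap Z_{j-1} & \hookrightarrow & Z_{j-1} \\
\downarrow & & \downarrow \\
L_j & \hookrightarrow & V
\end{array}
\end{equation*}
has open immersions as vertical maps and regular closed immersions as horizontal ones. Flat base change says the induced diagram of pullbacks and pushforwards commutes, and both vertical pullbacks are surjective. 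Therefore the image of the bottom pushforward equals the image of $A^*_G(L_j) \to A^*_G(V) \to A^*_G(Z_{j-1})$, which by the $m=1$ analysis is the ideal generated by the image of $c_{k_j}(V/L_j)$. Excision for $L_j \cap Z_{j-1} \hookrightarrow Z_{j-1}$ then yields
\begin{equation*}
A^*_G(Z_j) \cong A^*_G(Z_{j-1}) / \bigl(c_{k_j}(V/L_j)\bigr) \cong A^*_G / \bigl(c_{k_1}(V/L_1), \ldots, c_{k_j}(V/L_j)\bigr),
\end{equation*}
completing the induction at $j = m$.

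\textbf{Main obstacle.} The only nontrivial point is the inductive step: a priori one would need to understand $A^*_G(L_1 \cup \cdots \cup L_m)$, which is singular and whose structure depends on the combinatorics of the intersections of the $L_i$. The trick that avoids this is to strip off one $L_j$ at a time, so that at each stage one only excises a smooth locally closed subvariety $L_j \cap Z_{j-1}$, and then to use flat base change to transport the single-subspace calculation to the open set $Z_{j-1}$. Everything else is bookkeeping with the self-intersection formula and the identification $A^*_G(\text{affine space}) = A^*_G$.
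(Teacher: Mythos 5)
Your proof is correct and follows essentially the same route as the paper: homotopy invariance to identify $A^*_G(L)$ and $A^*_G(V)$ with $A^*_G$, excision to get the surjection, and the self-intersection formula (via the projection formula) to identify the kernel as the principal ideal $(c_{top}(V/L))$, then induction. The paper dispatches the inductive step with a single sentence, while you usefully spell it out via flat base change on the Cartesian square, but the underlying argument is the same.
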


\begin{proof}
Let $L$ be  single linear subspace. Since $L$ and $V$ are $G$-vector bundles
over a point, the homotopy property of equivariant Chow groups implies
that the pullbacks $A^*_G \to A^*_GL$ and $A^*_G \to A^*_G V$ are isomorphisms.
Hence, if $i \colon L \hookrightarrow V$ is the inclusion then $i^*$ is also 
an isomorphism. 
Let $j \colon V \smallsetminus L \to V$ be the open inclusion.
The excision short exact sequence yields a surjection of
equivariant Chow rings $A^*_G(V) = A^*_G \twoheadrightarrow A^*_G(Z)$ 
whose kernel 
is $i_*A^*_G(L)$. Since $i^*$ is an isomorphism, the projection formula
implies that $i_*A^*_G(L) = i_*([L])$ where $[L]$ is the fundamental class.
By the self-intersection formula, $i^*i_*([L]) = c_{top}(N_LV)$. Since the normal bundle to $L$ in $V$ is just the representation $V/L$ we see that
$i^*i_*([L]) = c_{top}(V/L)$, so $A^*_G(Z) =  A^*_G/\left(c_{top}(V/L)\right)$.

The general case follows by induction.
\end{proof}

When the representation $V$ of $G$ is faithful then we can identify
$G$ as a closed subgroup of $\G_m^n$ where $n = \dim V$. The inclusion
$G \hookrightarrow \G_m^n$ induces a surjection of character groups
$\Z^n \to \bX$. If we let $x_i$ be the first Chern class of the image in $\bX$
 of the standard basis
vector $e_i$ then $A^*_G$ is generated by $x_1, \ldots , x_n$
and $A^*_G = \Z[x_1, \ldots , x_n]/C(G)$ where
$C(G)$ is the linear ideal representing the relations between the images 
of the $e_i$ in the finitely generated abelian group $\bX$.
Choose coordinates $z_1, \ldots , z_n$ such that the action of $\G_m^n$
(and thus $G$) is diagonalized with respect to them.
With this notation, a 
$G$-invariant linear subspace $L$ has ideal $(z_{i_1}, \ldots  , z_{i_k}))$
for some subset $\{i_1, \ldots , i_n\}$ of $\{1, \ldots , n\}$.
The normal bundle to the hyperplane $z_i =0$ is
$x_i = c_1(e_i)$ so 
$c_{top}(V/L) = x_{i_1} \ldots x_{i_k}$ 
and  we can restate Proposition \ref{prop.linearcomp}
as.
\begin{proposition} \label{prop.linearcomp.faithful} Let $V$ is a
  faithful representation of a diagonalizable group $G$. Let $Z = V
  \smallsetminus \bigcup \left(L_1 \cup \ldots L_m\right)$ where $L_k$
  is the linear subspace $V(z_{k_1}, \ldots ,  z_{k_{r_k}})$.
  Then $$A^*_G(Z) \simeq \Z[x_1, \ldots x_n]/(C(G) + (\{x_{k_1} \ldots
  x_{k_{r_k}}\}_{k=1,\ldots , m}\})
$$
\end{proposition}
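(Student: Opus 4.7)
The plan is to apply Proposition \ref{prop.linearcomp} directly, with the whole content of the argument being a translation of the Chern classes $c_{top}(V/L_k)$ into explicit monomials in the generators $x_i$.

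First, following the discussion preceding the statement, I would use the faithful representation to embed $G \hookrightarrow \G_m^n$ and identify
$$A^*_G \;\cong\; \Z[x_1,\ldots,x_n]/C(G),$$
where the $x_i$ are the first Chern classes of the images in $\bX$ of the standard characters $e_i$ of $\G_m^n$, and $C(G)$ is the linear ideal describing the kernel of the surjection $\Z^n \twoheadrightarrow \bX$.

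Second, after diagonalizing the $\G_m^n$-action in the coordinates $z_1,\ldots,z_n$, each coordinate subspace $L_k = V(z_{k_1},\ldots,z_{k_{r_k}})$ is $G$-invariant, and the quotient representation decomposes as a direct sum of one-dimensional $G$-representations:
$$V/L_k \;\cong\; \bigoplus_{j=1}^{r_k} \C_{e_{k_j}},$$
where $\C_\chi$ denotes the line on which $G$ acts through the character $\chi$. By the Whitney sum formula applied to this decomposition of equivariant vector bundles over a point, one then has
$$c_{top}(V/L_k) \;=\; \prod_{j=1}^{r_k} c_1(\C_{e_{k_j}}) \;=\; x_{k_1}\cdots x_{k_{r_k}}.$$

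Third, I would substitute these monomials into the presentation of Proposition \ref{prop.linearcomp} to obtain
$$A^*_G(Z) \;\cong\; \Z[x_1,\ldots,x_n]/\bigl(C(G) + (x_{k_1}\cdots x_{k_{r_k}})_{k=1,\ldots,m}\bigr),$$
as desired. There is no real obstacle here; the only point requiring care is keeping track of the correspondence between the eigen-coordinates $z_i$ and the corresponding characters $e_i$ under the chosen diagonalization, so that the normal bundle of the hyperplane $\{z_i = 0\}$ is indeed the character $e_i$ whose first Chern class is $x_i$. Once that bookkeeping is in place, the conclusion is immediate from the previous proposition and the Whitney formula.
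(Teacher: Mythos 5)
Your proposal is correct and follows essentially the same route as the paper: the paper likewise treats this proposition as a restatement of Proposition \ref{prop.linearcomp}, obtained by identifying $A^*_G$ with $\Z[x_1,\ldots,x_n]/C(G)$ via the faithful embedding $G\hookrightarrow \G_m^n$ and computing $c_{top}(V/L_k)$ as the monomial $x_{k_1}\cdots x_{k_{r_k}}$ from the splitting of $V/L_k$ into the character lines $e_{k_j}$. Your explicit invocation of the Whitney sum formula just makes precise the bookkeeping the paper leaves implicit.
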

Finally, we consider the situation where the action of $G$ is not faithful
but the morphism $G \to \G_m^n$ factors as $G \stackrel{\iota} \to G \hookrightarrow \G_m$ where $\iota$ is an isogeny with finite kernel and cokernel
and the second map is an immersion.
In this case let $\widetilde{x}_i$ be the first Chern class of
the image of $e_i$ under the composition of maps of character groups
${\bf X}(\G^n_m) 
\to \bX \to \bX$.
We obtain the following description of the equivariant Chow ring
in this case.
\begin{proposition} \label{prop.linearcomp.isogeny}
With the notation as in the previous paragraph
$$A^*_G(Z) = \Z[x_1, \ldots , x_n]/(C(G) + (\{\widetilde{x}_{k_1} \ldots \widetilde{x}_{k_{r_k}}\}_{k=1,\ldots , m}\})
$$
\end{proposition}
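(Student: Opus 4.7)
The plan is to apply Proposition~\ref{prop.linearcomp} directly to the $G$-action and then translate the resulting normal-bundle Chern classes into the language of the composition ${\bf X}(\G_m^n)\to \bX$. The proof of Proposition~\ref{prop.linearcomp} used only equivariant excision together with the self-intersection formula; neither step needed faithfulness of the action, so the conclusion
\[
A^*_G(Z) \;=\; A^*_G\big/\bigl(c_{top}(V/L_1),\ldots,c_{top}(V/L_m)\bigr)
\]
applies verbatim in the present isogeny setting.

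The next step is to compute each $c_{top}(V/L_k)$ explicitly. Because the action of $G$ on $V$ factors through the diagonal $\G_m^n$-action, the coordinate decomposition $V=\bigoplus_{i=1}^n \A^1\cdot z_i$ is simultaneously a decomposition of $V$ into one-dimensional $G$-subrepresentations. The $i$-th summand carries the character obtained by pulling $e_i\in {\bf X}(\G_m^n)$ back along $G\stackrel{\iota}{\to} G'\hookrightarrow \G_m^n$, that is, the image $\widetilde{e}_i\in \bX$ under ${\bf X}(\G_m^n)\to {\bf X}(G')\to\bX$; hence its first Chern class is $\widetilde{x}_i$. Each $L_k=V(z_{k_1},\ldots,z_{k_{r_k}})$ is then the sum of the coordinate subrepresentations indexed by the complementary coordinates, so
\[
V/L_k \;=\; \bigoplus_{j=1}^{r_k} V_{k_j},
\qquad
c_{top}(V/L_k) \;=\; \widetilde{x}_{k_1}\,\widetilde{x}_{k_2}\cdots\widetilde{x}_{k_{r_k}}
\]
by Whitney multiplicativity. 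Substituting these into the quotient above yields the claimed ideal of relations.

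It remains to identify $A^*_G$ with $\Z[x_1,\ldots,x_n]/C(G)$ under $x_i\mapsto \widetilde{x}_i$: for a diagonalizable group with character group $\bX$, the equivariant Chow ring is the symmetric algebra on $\bX$ modulo the linear relations among any chosen set of generators, and here the natural choice is the $n$-tuple $(\widetilde{e}_1,\ldots,\widetilde{e}_n)$, whose additive relations are by definition encoded in $C(G)$.

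The main obstacle I anticipate is this last identification. Since $\iota$ may have nontrivial cokernel, the composition ${\bf X}(\G_m^n)\to \bX$ need not be surjective, so a priori the $\widetilde{x}_i$ do not obviously generate all of $A^*_G$. One would handle this either by restricting to the subring generated by the $\widetilde{x}_i$ (which is all that contributes after imposing the normal-bundle relations, since those kill precisely the characters appearing in $V$) or, in the intended toric DM-stack applications, by invoking the stacky-fan hypotheses (e.g.\ that the image of $\beta$ generates $N_{tors}$) that force the $\widetilde{e}_i$ to generate $\bX$, rendering the displayed presentation literally valid.
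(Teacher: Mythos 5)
Your main line of argument is exactly the one the paper intends: the paper gives no separate proof of this proposition, presenting it as an immediate consequence of Proposition~\ref{prop.linearcomp} once one observes that excision, homotopy invariance and the self-intersection formula never used faithfulness, and that the coordinate line $z_i$ carries the character pulled back along $G\to\G_m^n$, whose first Chern class is by definition $\widetilde{x}_i$, so that $c_{top}(V/L_k)=\widetilde{x}_{k_1}\cdots\widetilde{x}_{k_{r_k}}$. You also correctly isolate the one real subtlety, namely that $\iota^*$ has finite cokernel so the $\widetilde{x}_i$ need not generate $\bX(G)$, and hence the statement only makes literal sense once one has $n$ honest generators $x_1,\dots,x_n$ of $\bX(G)$ with $C(G)$ their relation ideal and the $\widetilde{x}_i$ expressed in terms of them.

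One caution: of the two fixes you offer for that subtlety, only the second is viable. Passing to the subring generated by the $\widetilde{x}_i$ does not compute $A^*_G(Z)$: imposing the monomial relations $\prod_j\widetilde{x}_{k_j}=0$ on all of $A^*_G$ is not the same as quotienting the subring they generate, and Example~\ref{ex:noneffective} (where $G=\C^\times\times\mu_2$ and $A^*_G(Z)$ retains a torsion class invisible to the $\widetilde{x}_i$) shows the two genuinely differ. What the paper actually relies on is your second option: in the toric application the hypothesis that $b_1,\dots,b_n$ generate $N_{tors}$ guarantees (Lemma~\ref{lem:Li-gen-char-gp}) that $\bX(G)$ is generated by $n$ classes $x_i$ with the $\widetilde{x}_i$ given by the associated formula, which is exactly how Proposition~\ref{prop:JiangTseng} invokes this result.
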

Now suppose that $V' \subset V$ be a subrepresentation of $G$ and let
$Z' = Z \cap V'$. Pullback on the closed embedding $Z' \stackrel{j} \hookrightarrow Z$
makes $A^*_G(Z')$ into an $A^*_G(Z)$-module. For each linear subspace $L_i$
let $E_i$ denote the quotient of normal bundles 
$\displaystyle{(V/L_i)/(V'/V'\cap L_i)}$.
\begin{proposition} \label{prop.relativecase}
The pullback $j^* \colon A^*_G(Z) \to A^*_G(Z')$ is surjective and $\ker j^*$
is the ideal generated by $\{ c_{top} (E_i)\}_{i=1}^m$.
\end{proposition}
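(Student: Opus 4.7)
The plan is to derive the result from the presentation of Proposition \ref{prop.linearcomp} combined with the Whitney sum formula applied to a short exact sequence of $G$-equivariant normal bundles.

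First, applying Proposition \ref{prop.linearcomp} to both open complements yields
\[
A^*_G(Z) \cong A^*_G/(c_{top}(V/L_i))_i \quad \text{and} \quad A^*_G(Z') \cong A^*_G/(c_{top}(V'/(V'\cap L_i)))_i.
\]
The ambient rings $A^*_G(V)$ and $A^*_G(V')$ are both canonically identified with $A^*_G = A^*_G(pt)$ via homotopy invariance for linear inclusions, so $j^*$ is the natural projection between these two quotients of $A^*_G$. In particular, $j^*$ is surjective, and its kernel is the image in $A^*_G(Z)$ of the ideal of $A^*_G$ generated by $\{c_{top}(V'/(V' \cap L_i))\}_i$.

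Next, for each $i$ I would consider the $G$-equivariant composition $V' \hookrightarrow V \twoheadrightarrow V/L_i$. Its kernel is $V' \cap L_i$, so it induces a short exact sequence of representations
\[
0 \to V'/(V' \cap L_i) \to V/L_i \to E_i \to 0.
\]
Whitney's product formula then gives the factorization
\[
c_{top}(V/L_i) = c_{top}(V'/(V' \cap L_i)) \cdot c_{top}(E_i)
\]
in $A^*_G$. Since the left-hand side already vanishes in $A^*_G(Z)$, this produces the relation $c_{top}(V'/(V' \cap L_i)) \cdot c_{top}(E_i) = 0$ there.

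The final step, which I expect to be the main subtlety, is to match the two generating sets $\{c_{top}(V'/(V' \cap L_i))\}_i$ and $\{c_{top}(E_i)\}_i$ of the same ideal in $A^*_G(Z)$. The Whitney factorization supplies one inclusion directly, and the reverse inclusion should be read off from the explicit monomial description of $A^*_G$ given in Propositions \ref{prop.linearcomp.faithful} and \ref{prop.linearcomp.isogeny}, under which the two sets of top Chern classes correspond to complementary factorizations of the same coordinate monomial. Carrying out this bookkeeping on the level of coordinates is where the argument requires care.
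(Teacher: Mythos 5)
Your first two paragraphs reproduce exactly the paper's (one-line) argument: both $A^*_G(Z)$ and $A^*_G(Z')$ are presented as quotients of $A^*_G$ by Proposition \ref{prop.linearcomp}, the pullback $j^*$ is the induced projection between these quotients, hence surjective, and its kernel is the image in $A^*_G(Z)$ of the ideal generated by the classes $c_{top}(V'/(V'\cap L_i))$. That part is correct, and it is the whole proof.

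The ``final step'' you defer is where the argument breaks, and it is not a matter of coordinate bookkeeping. The Whitney relation $c_{top}(V/L_i)=c_{top}(V'/(V'\cap L_i))\cdot c_{top}(E_i)$, together with the vanishing of the left-hand side in $A^*_G(Z)$, only says that the \emph{product} of the two classes is zero there; a relation $ab=0$ in a ring yields no containment in either direction between the ideals $(a)$ and $(b)$, so ``the Whitney factorization supplies one inclusion directly'' is false. Moreover the two ideals genuinely differ in general: take $G=\G_m^3$ acting with its standard characters on $V=\C^3$, with $L_1=V(z_1,z_2)$ and $V'=V(z_2)$. Then $A^*_G(Z)=\Z[x_1,x_2,x_3]/(x_1x_2)$ and $A^*_G(Z')=\Z[x_1,x_2,x_3]/(x_1)$, so $\ker j^*$ is the ideal generated by $x_1=c_{top}(V'/(V'\cap L_1))$, whereas $c_{top}(E_1)=x_2$, and $(x_1)\neq(x_2)$ in $\Z[x_1,x_2,x_3]/(x_1x_2)$. (Even more drastically, if $V'\cap L_i=0$ with $\dim V'=\codim L_i$ then $E_i=0$ and $c_{top}(E_i)=1$ generates the unit ideal, while $\ker j^*$ is zero.) In coordinates one has $c_{top}(V'/(V'\cap L_k))=\prod_{i\in M(L_k)\smallsetminus M(V')}x_i$ and $c_{top}(E_k)=\prod_{i\in M(L_k)\cap M(V')}x_i$; the kernel is generated by the first family, which is precisely what Proposition \ref{prop.relativecase.xxx} records and what your second paragraph already establishes. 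So the correct course is to stop there: the class $c_{top}(E_i)$ in the statement names the complementary factor, and the generating set of $\ker j^*$ should be $\{c_{top}(V'/(V'\cap L_i))\}_{i=1}^m$, i.e.\ the top Chern classes of the normal bundles of $V'\cap L_i$ in $V'$.
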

\begin{proof}
This follows the descriptions of $A^*_G(V)$ and $A^*_G(V')$ given by Proposition \ref{prop.linearcomp}.
\end{proof}
We can combine Proposition \ref{prop.relativecase} with Propositions
\ref{prop.linearcomp.faithful} and \ref{prop.linearcomp.isogeny} to
obtain the an explicit description of $A^*_G(V')$ as a quotient of
$A^*_G(V)$.  To do so we need to introduce some notation. As above,
fix coordinate $z_1, \ldots , z_n$ on $V$ such that the action of $G$
is diagonalized with respect to the them. Given a linear subspace $L$,
let $M(L) = \{i| z_i|_L =0\}$. (In other words, $I(L) = ( \{z_i\}_{i
  \in M(L)})$.) Then $N_LV = \oplus_{z_i \in M(L)} {\mathcal O}(z_i)$
and we have the following proposition.
\begin{proposition} \label{prop.relativecase.xxx} If $G$ acts
  faithfully on $V$ then $$A^*_G(V') = A^*_G(V)/(\{\prod_{i_k \in
    M(L_k) \smallsetminus M(V')} x_{i_k}\}_{k=1, \ldots , m})$$ and if $G$ acts as
    the composition of an isogney with a faithful action then
$$A^*_G(V') = A^*_G(V)/(\{\prod_{i_k \in M(L_k) \smallsetminus M(V')} \tilde{x}_{i_k}\}_{k=1, \ldots , m})$$
\end{proposition}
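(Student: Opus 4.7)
The plan is to combine Proposition \ref{prop.relativecase} with the explicit presentations of $A^*_G(Z)$ from Propositions \ref{prop.linearcomp.faithful} and \ref{prop.linearcomp.isogeny}, reducing the claim to a character-theoretic computation of one top Chern class per subspace.

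Because $G$ acts diagonally with respect to $z_1,\ldots,z_n$, both $V'$ and each $L_k$ are $G$-invariant coordinate subspaces. Writing $V = \bigoplus_{j=1}^n \C_{e_j}$ for the character decomposition, one has
$$V' = \bigoplus_{j \notin M(V')} \C_{e_j}, \qquad L_k = \bigoplus_{j \notin M(L_k)} \C_{e_j},$$
and therefore $V' \cap L_k = \bigoplus_{j \notin M(L_k) \cup M(V')} \C_{e_j}$. The equivariant normal bundle to $V'\cap L_k$ in $V'$ accordingly splits as a sum of character lines
$$V'/(V' \cap L_k) \;=\; \bigoplus_{j \in M(L_k) \smallsetminus M(V')} \C_{e_j},$$
and under the identification $A^*_G = \Z[x_1,\ldots,x_n]/C(G)$ of Proposition \ref{prop.linearcomp.faithful} (respectively under the isogeny-case identification of Proposition \ref{prop.linearcomp.isogeny}), its top Chern class is precisely $\prod_{j \in M(L_k)\smallsetminus M(V')} x_j$ (respectively $\prod_{j \in M(L_k)\smallsetminus M(V')} \widetilde{x}_j$).

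By Proposition \ref{prop.relativecase}, these products generate the kernel of $j^*\colon A^*_G(Z) \to A^*_G(Z')$, and the defining relations $\prod_{i_k \in M(L_k)} x_{i_k}$ of $A^*_G(Z)$ are automatically multiples of them (since $M(L_k)\smallsetminus M(V') \subseteq M(L_k)$), hence are absorbed into the ideal. Substituting into the presentation from Proposition \ref{prop.linearcomp.faithful} or \ref{prop.linearcomp.isogeny} yields the stated formula. The only real bookkeeping — and the main place an error could slip in — is tracking which characters survive in the quotient $V'/(V'\cap L_k)$: they are those indexed by coordinates vanishing on $L_k$ but not already on $V'$, i.e., by $M(L_k)\smallsetminus M(V')$. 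Once this index-set convention is in hand the remainder of the argument is a direct substitution.
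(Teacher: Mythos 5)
Your proof is correct and follows essentially the route the paper intends: compare the presentations of $A^*_G(Z)$ and $A^*_G(Z')$ furnished by Proposition \ref{prop.linearcomp} (in the form of Propositions \ref{prop.linearcomp.faithful} and \ref{prop.linearcomp.isogeny}), compute $c_{top}\bigl(V'/(V'\cap L_k)\bigr)=\prod_{j\in M(L_k)\smallsetminus M(V')}x_j$, and note that the old generators $\prod_{j\in M(L_k)}x_j$ are multiples of the new ones. One caveat on your citation: Proposition \ref{prop.relativecase} as literally stated presents $\ker j^*$ as generated by $c_{top}(E_k)$ with $E_k=(V/L_k)/(V'/(V'\cap L_k))$, whose top Chern class would be $\prod_{j\in M(L_k)\cap M(V')}x_j$ rather than the class you use, so the justification should rest on your direct substitution into the two presentations (which is sound and is what actually yields the stated formula) rather than on the literal wording of that proposition.
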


\section{Inertial products}\label{sec:inertialproducts}
Let $G$ be an algebraic group acting on a scheme $X$.
We recall the formalism of inertial products of \cite{EJK:16}. 
However, since our application is to toric stacks, we assume that $G$ is diagonalizable throughout. 

\subsection{Inertia spaces}
\begin{definition}
	The \emph{inertia space} for the action of $G$ on $X$ is defined as
	\beq
	I_GX=\{(g,x)\st gx=x\}\subset G\times X,
	\eeq
	and the \emph{$l$-th higher inertia space} is the $l$-tuple fiber product over $X$
	\beq
	I_G^lX=\{(g_1,\dots,g_l,x)\st g_ix=x\text{ for }i=1,\dots,l\}\subset G^l\times X.
	\eeq
\end{definition}

\begin{definition}
	If $\sX$ is defined to be the quotient stack $[X/G]$, then the \emph{inertia stack} of $\sX$ is 
	\beq
	I\sX=[I_GX/G],
	\eeq
	and the \emph{$l$-th higher inertia stack} is
	\beq
	\mathbb{I}^l\sX=[\mathbb{I}^l_GX/G].
	\eeq
\end{definition}
	
\begin{definition}
	Let $X$ be a variety with the action of a group $G$. For any $l$-tuple $\bv g=(g_1,\dots, g_{l})\in G^{l}$, define the twisted sector 
	\beq
	X^{\bv g}=\{(g_1,\dots,g_l,x)\in\mathbb{I}^l_GX\}.
	\eeq
\end{definition}
	
Note that in the case $\bv g=\text{id}\in G$, we do not consider the sector $X^\bv g=X$ to be twisted.
	
\begin{remark}\label{rmk:IlIl+1}
	For convenience purposes, we will occasionally identify $\I^l_GX$ with the open and closed subset 
	\beq
	\{(g_1,\dots,g_{l+1},x)\st g_1g_2\cdots g_{l+1}=1\}
	\eeq
	of $\I^{l+1}_GX$.
\end{remark}
	
\begin{definition}
	Take any $(g_1,g_2,g_3,x)\in\I^2_GX$ where, as in Remark~\ref{rmk:IlIl+1}, $g_1g_2g_3=1$. Then for $i=1,2,3$, we define $e_i:\I^2_GX\to I_GX$ be the evaluation morphism $(g_1,g_2,g_3,x)\mapsto(g_i,x)$. Additionally, we define $\mu:\I^2_GX\to I_GX$ be the morphism $(g_1,g_2,g_3,x)\mapsto(g_1g_2,x)$.
\end{definition}

\begin{definition}\label{def:inertial-product}
	Given a class $c\in A^*_G(\I^2X)$, we define the \emph{inertial product with respect to }$c$ to be
	\beq
	x\star_cy:=\mu_*(e_1^*x\cdot e_2^*y\cdot c),
	\eeq
	where $x,y\in A^*_G(I_GX)$.
\end{definition}
We say that a 
$G$-equivariant vector bundle $\mathscr{R}$ on $\I^2X$ is associative 
if the inertial product
with respect to  $c=\eu(\mathscr{R})$ is associative. The basic example
of an associative vector bundle is the obstruction bundle used to define the orbifold product. This, and a plethora of other associative bundles can be constructed using the logarithmic trace construction of \cite{EJK:10, EJK:16}.

\subsection{Logarithmic trace and inertial products}
Let $V$ be a rank-$n$ vector bundle on $X$ which is
$G$-equivariant. Let $g$ be an element of a group
$G\subseteq(\C^\times)^n$ which acts trivially on $X$ and whose action
is an automorphism of the fibers of $V\to X$. Under these conditions,
the eigenbundles for the action of $g$ will be $G$-subbundles of $E$.
	
Denote the eigenvalues for the action of $g$ on $V$ by
$\exp(2\pi\sqrt{-1}\lambda_k)$ for $1\leq k\leq r$, and without loss
of generality we can say $0\leq\lambda_k<1$ for each $k$. We will use
$E_k$ to denote the eigenbundle corresponding to $\lambda_k$.

\begin{definition} \cite[Definition 4.1]{EJK:10}\label{def:logtrace}
The \emph{logarithmic trace} of $V$ for the action of
  $g$ is \beq L(g)(V) = \sum_{k=1}^r\lambda_kV_k\in K_{G}(X)\otimes\R
  \eeq .
\end{definition}

\begin{remark}
  The assumption that $G$ is diagonalizable implies that the full 
group $G$ acts
  on $X^g$ for any $g\in G$. This simplifies
  Definitions~\ref{def:logtrace} and \ref{def:logrestr} as compared to
  those found in \cite{EJK:10, EJK:16}.
\end{remark}

\begin{example}\label{ex:log-trace}
	Let $Z=\C^4-\{(0,0,0,0)\}$, and let $G=\C^\times$ act on $Z$, via
	\beq
	g\cdot(z_1,z_2,z_3,z_4)=\left(g^2z_1, g^4z_2, g^5z_3, g^6z_4\right). 
	\eeq
	Let $V$ be a vector bundle on $Z$ with subbundles $L_i$ corresponding to the divisors where $z_i=0$. That is, $V$ is the tangent bundle $T_V=L_1+L_2+L_3+L_4$.
	
	First, note that $I_GZ=\coprod Z^g$, where the union is over the twelve elements $g\in G=\C^\times$ which fix at least one point of $Z$. In order to fix a point of $Z$, $g$ must be either a second, fourth, fifth or sixth root of unity. 
	
	Let's look at $g=e^{2\pi\sqrt{-1}/3}\in G$, and compute its logarithmic trace. Note that it only makes sense to consider the action of $g$ on $V$ if we restrict to $V|_{Z^g}$. 
	
	Under the induced action of $\alpha$ on $Z$, we have
	\beq
		g\cdot \left( z_1,z_2,z_3,z_4 \right) 
		= 
		\left(e^{4\pi\sqrt{-1}/3}z_1,\, 
			e^{2\pi\sqrt{-1}/3}z_2,\, 
			e^{4\pi\sqrt{-1}/3}z_3,\, 
			z_4
		\right). 
	\eeq
	So $g$ has three eigenvalues and eigenbundles for its action on $V|_{Z^g}$: 
	\beq
	\begin{array}{ccl}
	\lambda_1 = 1 & \text{for} & E_1 = L_4\\
	\lambda_2 = e^{4\pi\sqrt{-1}/3} & \text{for} & E_2 = L_1\text{ or }E_2=L_3\\
	\lambda_3 = e^{2\pi\sqrt{-1}/3} & \text{for} & E_3 = L_2
	\end{array}
	\eeq
	Thus, the logarithmic trace of $g$ on $V|_{Z^g}$ is
	\beq
		L(g)(V|_{Z^g}) = \frac23L_1+\frac13L_2+\frac23L_3.
	\eeq
	Notice that the coefficient on $L_4$ in the logarithmic trace is 0; this is a direct consequence of the fact that $Z^g = \{(0,0,0,z_4)\st z_4\neq0\}$. 
\end{example}

\begin{definition}\cite[Definition 5.3]{EJK:10}\label{def:logrestr}
	Let $\bv g$ be an $l$-tuple $(g_1,\dots g_l)$ such that there exists a finite subgroup of $G$ containing every $g_i$ for $1\leq i\leq l$, and with the additional stipulation that $g_1\cdots g_l=1$. Then the \emph{logarithmic restriction} of $E$ is the class in $K_G(X^{\bv g})$ defined by the formula 
	\begin{equation}
	V(\bv g)=\sum_{i=1}^l L(g_i)(V|_{X^\bv g})+V^{\bv g}-V|_{X^\bv g}.
	\end{equation}
	The assignment $LR:V\mapsto V(\bv g)$ is called the \emph{logarithmic restriction} map, and takes non-negative elements $E\in K_G(X)$ to non-negative elements $LR(V)\in K_G(X^{\bv g})$.
\end{definition}

\begin{example}\label{ex:log-restr}
	We'll build on Example~\ref{ex:log-trace}, with the same $Z$, $G$, $\alpha$ and $V$; additionally, let $\bv g = \left(e^{\pi\sqrt{-1}/3}, e^{2\pi\sqrt{-1}/3}, e^{\pi\sqrt{-1}}\right)$. Then we have $Z^{\bv g} = \{(0,0,0,z_4)\st z_4\neq 0\}\subseteq Z$, which in turn gives $V^{\bv g} = L_4$. 
	
	Then the logarithmic restriction of $V$ at $\bv g$ is
	\begin{align*}
		&V(\bv g) \\
		&= L\left(e^{\pi\sqrt{-1}/3}\right)\left(V|_{Z^\bv g}\right)+L\left(e^{2\pi\sqrt{-1}/3}\right)\left(V|_{Z^\bv g}\right)+L\left(e^{\pi\sqrt{-1}}\right)\left(V|_{Z^\bv g}\right)+V^{\bv g}-V|_{Z^\bv g} \\
		&= \left(\dfr13L_1+\dfr23L_2+\dfr56L_3\right)+\left(\frac23L_1+\frac13L_2+\frac23L_3\right) + \dfr12L_3\\
		& + L_4 - (L_1+L_2+L_3+L_4)\\
		&= L_3.
	\end{align*}
\end{example}

\begin{proposition}\cite[Proposition 4.0.10]{EJK:16}\label{def:E-plus-minus}
	Let $V$ be a $G$-equivariant bundle on $X$, and let $g_1$, $g_2$ lie in a common compact subgroup of $G$. Then the virtual bundles 
	\begin{equation}
	V^+(g_1,g_2)=L(g_1)(V|_{X^{g_1,g_2}})+L(g_2)(V|_{X^{g_1,g_2}})-L(g_1g_2)(V|_{X^{g_1,g_2}})
	\end{equation}
	and
	\begin{equation}
	V^-(g_1,g_2)=L(g_1^{-1})(V|_{X^{g_1,g_2}})+L(g_2^{-1})(V|_{X^{g_1,g_2}})-L(g_1^{-1}g_2^{-1})(V|_{X^{g_1,g_2}})
	\end{equation}
	are represented by non-negative integral elements in $K_G(X^{g_1,g_2})$.
\end{proposition}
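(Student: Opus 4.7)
The plan is to reduce the statement to a direct eigenvalue-by-eigenvalue bookkeeping argument, exploiting the fact that $G$ is diagonalizable and that $g_1, g_2$ lie in a common compact (hence finite) abelian subgroup. Because the subgroup they generate is abelian and acts fiberwise on $V|_{X^{g_1,g_2}}$, the actions of $g_1$ and $g_2$ can be simultaneously diagonalized, giving a decomposition
\begin{equation*}
V|_{X^{g_1,g_2}} = \bigoplus_{k} W_k
\end{equation*}
into common eigenbundles, where $g_1$ acts on $W_k$ by $e^{2\pi\sqrt{-1}\alpha_k}$ and $g_2$ acts by $e^{2\pi\sqrt{-1}\beta_k}$, with $0\le \alpha_k,\beta_k <1$. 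The element $g_1g_2$ then acts on $W_k$ by $e^{2\pi\sqrt{-1}(\alpha_k+\beta_k)}$, and so contributes the fractional part $\{\alpha_k+\beta_k\}$ to its logarithmic trace.

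Next I would plug these numbers into the definition of $V^+(g_1,g_2)$. The coefficient of $W_k$ in $V^+(g_1,g_2)$ equals $\alpha_k+\beta_k-\{\alpha_k+\beta_k\} = \lfloor\alpha_k+\beta_k\rfloor$, which is $0$ when $\alpha_k+\beta_k<1$ and $1$ when $1\le\alpha_k+\beta_k<2$. Hence $V^+(g_1,g_2)$ is literally the sub-bundle $\bigoplus_{\alpha_k+\beta_k\ge 1} W_k$ of $V|_{X^{g_1,g_2}}$, which is non-negative and integral in $K_G(X^{g_1,g_2})$.

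For $V^-(g_1,g_2)$, I would carry out the analogous calculation, noting that the log-trace coefficient of $g_1^{-1}$ on $W_k$ is $\{-\alpha_k\}$ (equal to $1-\alpha_k$ if $\alpha_k>0$ and $0$ otherwise), with similar formulas for $g_2^{-1}$ and $g_1^{-1}g_2^{-1}$. Case-checking the four possibilities (whether each $\alpha_k,\beta_k$ is zero, and whether $\alpha_k+\beta_k$ is below, equal to, or above $1$) shows that the coefficient of $W_k$ in $V^-(g_1,g_2)$ is always either $0$ or $1$, so $V^-(g_1,g_2)$ is again a sub-bundle of $V|_{X^{g_1,g_2}}$.

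The only step requiring any real care is justifying the simultaneous diagonalization in the equivariant/family setting: since $X^{g_1,g_2}$ is the common fixed locus and the cyclic group $\langle g_1,g_2\rangle$ is finite abelian acting $\C$-linearly on the fibers of $V|_{X^{g_1,g_2}}$, its character decomposition gives a global $G$-stable splitting into common eigenbundles, which is what the argument needs. Once that is established, everything else is the elementary fractional-part identity $\{a\}+\{b\}-\{a+b\}\in\{0,1\}$, and its analogue for $\{-a\}+\{-b\}-\{-a-b\}$.
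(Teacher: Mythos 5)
Your argument is correct. The paper does not actually prove this proposition---it is imported verbatim from \cite[Proposition 4.0.10]{EJK:16}---but your eigenvalue-by-eigenvalue reduction to the fractional-part identities $\{a\}+\{b\}-\{a+b\}\in\{0,1\}$ and $\{-a\}+\{-b\}-\{-a-b\}\in\{0,1\}$ is precisely the computation the paper itself carries out in Lemma~\ref{lem:computeLi+} for the line bundles $\mathcal{L}_i$ (with $q_{1,i},q_{2,i}$ playing the role of your $\alpha_k,\beta_k$), so your route is essentially the standard one. One cosmetic slip: a compact subgroup of $G$ need not be finite and $\langle g_1,g_2\rangle$ need not be cyclic, but since $G$ is diagonalizable the commuting pair $g_1,g_2$ is still simultaneously diagonalizable fiberwise over the common fixed locus, which is all your eigenbundle decomposition requires.
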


\begin{definition}\label{def:R-plus-minus}
  Let $\bv g=(g_1,g_2)\in G^2$. Define classes $R^+V$ and $R^-V$ in
  $K_G(\I^2X)$ by setting the component of $R^+V$
  (resp. $R^-V$) in $K_G((X^{\bv g}))$ to be $V^+(g_1,g_2)$
  (resp. $V^-(g_1,g_2)$).
\end{definition}

\begin{theorem}\cite[Theorem 4.0.12]{EJK:16}\label{thm:inertial-is-assoc}
  Let $\T$ be the $G$-equivariant vector bundle corresponding to
the tangent bundle of $[X/G]$ and let $\mathscr{R}^+V$ be the vector bundle $LR(\T)+R^+V$ for any
  $G$-equivariant vector bundle $V$ on $X$. Then the inertial product
  with respect to $\eu(\mathscr{R}^+V)$, which we call $\star_{V^+}$,
  is associative. Similarly, $\eu(\mathscr{R}^-V)=\eu(LR(\T)+R^-V)$,
  called $\star_{V^-}$, also defines an associative inertial product.
\end{theorem}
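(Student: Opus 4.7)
The plan is to reduce associativity of $\star_{V^+}$ to a K-theoretic cocycle identity on the triple inertia $\I^3 X$, and then deduce the $\star_{V^-}$ statement by the symmetry $g \mapsto g^{-1}$. With Proposition \ref{def:E-plus-minus} in hand, each fiber of $R^+V$ on $\I^2 X$ is represented by a non-negative integral K-theory class, and by construction (Definition \ref{def:logrestr}) so is $LR(\T)$; hence $\mathscr{R}^+V = LR(\T)+R^+V$ genuinely represents a $G$-equivariant vector bundle on $\I^2 X$, so $\eu(\mathscr{R}^+V)$ is a well-defined class to plug into Definition \ref{def:inertial-product}.

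Next I set up the cocycle. Let $e_{1,2},\,e_{2,3},\,e_{12,3},\,e_{1,23} \colon \I^3 X \to \I^2 X$ denote the four evaluation morphisms corresponding to the two ways of bracketing a triple $(g_1,g_2,g_3)$. The standard manipulation (exactly as in Abramovich-Graber-Vistoli for the orbifold product) using the projection formula together with functoriality of pushforward along $\mu$ reduces associativity of $\star_c$ to the cocycle condition
\beq
e_{1,2}^* c \cdot e_{12,3}^* c \;=\; e_{2,3}^* c \cdot e_{1,23}^* c \qquad \text{in } A^*_G(\I^3 X).
\eeq
Since $c = \eu(\mathscr{R}^+V)$ is an Euler class, it is enough to verify the additive identity
\beq
e_{1,2}^*\mathscr{R}^+V + e_{12,3}^*\mathscr{R}^+V \;=\; e_{2,3}^*\mathscr{R}^+V + e_{1,23}^*\mathscr{R}^+V
\eeq
in $K_G(\I^3 X)$.

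Finally, the identity splits into two pieces. The $LR(\T)$ summand satisfies the cocycle by the same computation that establishes associativity of the orbifold product in the $V=0$ case. The $R^+V$ summand is pure telescoping: at a triple $(g_1,g_2,g_3)$ both sides reduce, after cancellation of the internal $\pm L(g_1 g_2)V$ and $\pm L(g_2 g_3)V$ contributions, to $L(g_1)V + L(g_2)V + L(g_3)V - L(g_1 g_2 g_3)V$ on the common fixed locus, and are thus equal. The $\star_{V^-}$ cocycle is identical after replacing each $g_i$ by $g_i^{-1}$, using that $G$ is abelian so that $(g_1 g_2)^{-1} = g_1^{-1} g_2^{-1}$. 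The main obstacle is not the telescoping itself, which is essentially formal once the bookkeeping is correct, but rather isolating from the orbifold-product argument the precise K-theoretic cocycle satisfied by $LR(\T)$ and verifying that it is purely additive, so that the $R^\pm V$ correction can be added freely without breaking associativity.
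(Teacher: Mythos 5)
The paper offers no proof of this statement; it is quoted from \cite[Theorem 4.0.12]{EJK:16}, so there is no internal argument to compare against. That said, your overall strategy --- decompose $\mathscr{R}^{\pm}V$ as $LR(\T)$ plus the correction $R^{\pm}V$, observe that the correction telescopes on the triple inertia to the symmetric class $L(g_1)(V)+L(g_2)(V)+L(g_3)(V)-L(g_1g_2g_3)(V)$ (resp.\ its inverse analogue), and conclude that adding it cannot disturb whatever identity makes the orbifold product associative --- is exactly the argument of \cite{EJK:16}. The telescoping itself is sound: the restriction of $L(g)(V|_{X^{g_1,g_2}})$ to $X^{g_1,g_2,g_3}$ is $L(g)(V|_{X^{g_1,g_2,g_3}})$ because the eigenbundle decomposition restricts, Proposition~\ref{def:E-plus-minus} guarantees that $R^{\pm}V$ is an honest bundle so its Euler class multiplies out of $\eu(\mathscr{R}^{\pm}V)$, and commutativity of $G$ gives $(g_1g_2)^{-1}=g_1^{-1}g_2^{-1}$ for the minus case.

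The one genuine inaccuracy is your claimed reduction of associativity to the naive cocycle $e_{1,2}^*c\cdot e_{12,3}^*c=e_{2,3}^*c\cdot e_{1,23}^*c$. The fiber squares computing $e_1^*\mu_*$ (and $e_2^*\mu_*$) over the triple inertia are not transverse in general --- $X^{g_1,g_2}$ and $X^{g_1g_2,g_3}$ need not meet properly inside $X^{g_1g_2}$ --- so the excess intersection formula inserts Euler classes of excess normal bundles $E_{12,3}$ and $E_{1,23}$, and the correct criterion is $e_{1,2}^*c\cdot e_{12,3}^*c\cdot\eu(E_{12,3})=e_{2,3}^*c\cdot e_{1,23}^*c\cdot\eu(E_{1,23})$. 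This is precisely the content of the $V=0$ case: $LR(\T)$ is engineered so that $e_{1,2}^*LR(\T)+e_{12,3}^*LR(\T)+E_{12,3}$ equals a class symmetric in $(g_1,g_2,g_3)$, not so that it satisfies the coboundary-free cocycle you wrote, which is false for $c=\eu(LR(\T))$. Your conclusion survives because the $R^{\pm}V$ correction contributes the same K-theory class, hence the same multiplicative Euler factor, to both sides of the corrected identity; but as written the displayed cocycle condition would mislead anyone trying to fill in the base case you defer to the orbifold-product literature.
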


\begin{remark} \cite{EJK:16}\label{def:orb-prod}
	The \emph{orbifold product} $\star_{orb}$ is defined by taking $V=0$ in Definition~\ref{def:inertial-product} and Theorem~\ref{thm:inertial-is-assoc} (with either $\mathscr{R}^+$ or $\mathscr{R}^-$). 
	The \emph{virtual product} $\star_{virt}$ of \cite{GLSUX:07} 
is the $\star_{\T^-}$ product.
\end{remark}

\section{Stacky fans and toric stacks}\label{ssec:fans,stacks}

\subsection{Definitions and setup}
We recall the construction of toric DM stacks from stacky fans following \cite{BCS:05}.
	
A stacky fan $\stSig$ is a triple $(N,\Sigma,\beta)$, where $N$ is a
finitely generated abelian group of rank $d$, $\Sigma$ is a simplicial
fan with $n$ rays, $\rho_1, \ldots , \rho_n$
 in the $d$-dimensional $\Q$-vector space
$N_\Q:=N\otimes_\Z\Q$, and $\beta$ is a homomorphism from $\Z^n$ to
$N$ defined by a choice of elements
$b_1, \ldots , b_n \in N$ such that the image of $b_i$ in $N \otimes \Q$
generates the ray $\rho_i$. We call $b_1,\dots b_n$ the \emph{distinguished points} of $\stSig$ in $N$. 

Let $X_\Sigma$ be the toric variety associated to the fan
$\Sigma$. Let $T_N:=\Hom(N^*,\C^\times)$ and
$T_L:=\Hom((\Z^n)^*,\C^\times)\cong(\C^\times)^{n}$.  Then the natural
map $\beta^*:N^*\to (\Z^n)^*$ induces a morphism of tori
$T_\beta:T_L\to T_N$.

Let $N=\Z^d\oplus\Z/{m_1}\oplus\dots\oplus\Z/{m_r}$ be the invariant
factor form of $N$. We define $\beta_{aug}:\Z^n\oplus\Z^r\to\Z^{d+r}$
to be the map represented by the matrix \beq
\left[\begin{array}{cccccc}
    b_{1,1} &  & b_{n,1} & 0 & 0 & 0 \\
    & \ddots & 	 & 0 & \ddots & 0 \\
    b_{1,d}   & 	 & b_{n,d} & 0 & 0 & 0 \\
    b_{1,d+1} &  & b_{n,d+1} & m_1 & 0 & 0\\
    & \ddots &  & 0 & \ddots & 0\\
    b_{1,d+r} & & b_{n,d+r} & 0 & 0 & m_r
		\end{array}\right]
	\eeq 
	where the $i$-th column of $\beta_{aug}$ is a lift of $b_i$ under the natural surjection $\Z^d\oplus\Z^r\to N$. 
	
	For convenience, we will represent the integers $b_{i,d+j}$ (which are elements of $\Z/{m_l}$) using elements of the set $\{0,1,\dots,m_l-1\}$ for $l=1,\dots,r$, but the construction is independent of choice (cf Remark \ref{rem.choiceindep}).
	
	Define a map $E^\beta:(\C^\times)^{n+r}\to(\C^\times)^{d+r}$
        by exponentiating $\beta_{aug}$. That is,
	\begin{equation*}
		E^\beta(\gamma_1,\dots,\gamma_n,s_1,\dots,s_r) = \left(\prod_{i=1}^n {\gamma_i}^{b_{i,1}},\dots,\prod_{i=1}^n {\gamma_i}^{b_{i,d}},\,{s_1}^{m_1}\prod_{i=1}^n {\gamma_i}^{b_{i,d+1}},\dots,{s_r}^{m_r}\prod_{i=1}^n {\gamma_i}^{b_{i,d+r}} \right).
	\end{equation*}
	Let $G$ be the kernel of the map $E^\beta$. As a subgroup of
        $(\C^\times)^{n+r}$:\begin{equation}\label{eq:def-G}
          G=\left\{(\gamma_1,\dots,\gamma_n,s_1,\dots,s_r)\left|\,
			\begin{aligned} 
			&\prod_{i=1}^n {\gamma_i}^{b_{i,j}}=1\text{ for }1\leq j\leq d, \text{ and }\\ 
			{s_l}^{m_l}&\cdot\prod_{i=1}^n {\gamma_i}^{b_{i,d+l}}=1\text{ for }1\leq l\leq r 
			\end{aligned}
		\right.\right\}.
		\end{equation}

Let $\C[z_{\rho}\st \rho\in\Sigma(1)]=\C[z_1,\dots,z_n]$ be the total
coordinate ring of $X_\Sigma$. There is a natural action of $G$ on
$\A^n = \Spec \C[z_1, \ldots , z_n]$ given by \beq
(\gamma_1,\dots,\gamma_n,s_1,\dots,s_n)\cdot(z_1,\dots,z_n) =
(\gamma_1z_1,\dots,\gamma_nz_n).  \eeq Let $Z = \A^n \smallsetminus
 V(J_{\Sigma})$
where $J_{\Sigma}:=\vr{\left.\prod_{\rho_i\not\subset\sigma}z_i\right|
  \sigma\in\Sigma}$ is the irrelevant ideal.  Since $V(J_{\Sigma})$ is
a union of coordinate subspaces, $Z=\A^n\setminus V(J_{\Sigma})$ is
$G$-invariant so there is an action of $G$ on $Z$.

The \emph{toric stack associated to $\stSig$} is defined to be the
stack quotient $\cX(\stSig):=[X_\Sigma/G]\cong[Z/G]$ under this
action. With the assumptions of our setup $\cX(\stSig)$ will be a
smooth, separated Deligne-Mumford stack \cite[Proposition 3.2]{BCS:05}.

\begin{remark}\label{rmk:G}
	If $N$ is torsion-free, we can simplify this explicit construction. Since $r=0$, we get
	\begin{equation}\label{eq:def2-G}
	G=\left\{(\gamma_1,\dots,\gamma_n)\left|\, 
			\prod_{i=1}^n {\gamma_i}^{b_{i,j}}=1\text{ for }1\leq j\leq d
	\right.\right\}
	\end{equation} In this case the action of $G$ on $\A^n$ corresponds
to a faithful representation. Thus, the stabilizer of a general point is trivial
and the quotient stack $[Z/G]$ is an effective orbifold.
\end{remark}

If $N$ has torsion then $\sX(\stSig)$ then the action of $G$ on $Z$ is not 
generically free as the following example shows.
\begin{example}\label{ex:p64intro}
	Let $N=\Z\oplus\Z/2$ and let $\Sigma$ be the complete fan in $N_\Q\cong \Q$. Define $\beta$ by the distinguished points $b_1=(2,1)$ and $b_2=(-3,0)$ in $N$. This defines a stacky fan $\stSig=(N, \Sigma,\beta)$. Notice that $b_1$ and $b_2$ are \emph{not} minimal generators of $\rho_1$ and $\rho_2$ over $N_\Q$.
	
	As $N$ is not a lattice, we must compute $G$ using \eqref{eq:def-G}. We have the homomorphism $\beta_{aug}$:
		\beq
		\left[\begin{array}{rrr} 2&-3&0\\ 1&0&2 \end{array}\right]:\Z^3\to\Z^2
		\eeq
	Hence,
	\begin{align*}
	G=\ker(E^\beta)=&\left\{(\gamma_1,\gamma_2,s_1)\in(\C^\times)^3\st {\gamma_1}^2{\gamma_2}^{-3}=1, \gamma_1{s_1}^2=1 \right\}.
	\end{align*}
	
	Computing $J_\Sigma$ the usual way, we have $Z=\C^2\,\setminus\{(0,0)\}$, so the $G$-action on $Z$ is 
	\beq
	(\gamma_1,\gamma_2,s_1)\cdot(z_1,z_2)=(\gamma_1z_1,\gamma_2z_2).
	\eeq
	But since $G\cong\left\{\left.(\lambda^6,\lambda^4,\lambda^{-3})\,\right|\,\lambda\in\C^\times\right\}\cong\C^\times$, the $G$-action is equivalent to the $\C^\times$-action 
	\beq
	\lambda\cdot(z_1,z_2)=(\lambda^6z_1,\lambda^4z_2).
	\eeq
	That is, $\cX(\stSig)$ is the weighted projective stack $\PP(6,4)$, which of course is not a reduced orbifold.
\end{example}

\begin{remark} \label{rem.choiceindep}
  In Example~\ref{ex:p64intro}, we made a specific choice of
  distinguished points $b_1=(2,1)$ and $b_2=(-3,0)$, 
 If we had chosen any other equivalent lift under the
  surjection $\Z\oplus\Z\to\Z\oplus\Z/2$, we would have obtained the
  same stack. As an extreme example, we can pick $b_1=(2,87)$ and
  $b_2=(-3,54)$ which gives us \beq
  G\cong\left\{(\lambda^6,\lambda^4,\lambda^{-369})|\lambda\in\C^\times\right\},
  \eeq which yields the same stack $\cX(\stSig)=\PP(6,4)$.
\end{remark}

\subsection{Characters and Line bundles} \label{sec.charline}

Let $\cX(\stSig)=[Z/G]$ be a toric Deligne-Mumford stack
associated to the stacky fan $\stSig=(N, \Sigma, \beta)$ 
and let $X(\Sigma)$ be the underlying toric variety.

A line bundle on $[Z/G]$ is a $G$-equivariant line bundle on
$Z$. Since $Z$ is an open set in a representation of $G$, $\Pic([Z/G])
= A^1_G(Z)$ is generated by characters of $G$. In, addition if we
assume that the fan $\Sigma$ has maximal dimension (for example if
$X(\Sigma)$ is proper) then the complement of $Z$ in
$\A^{|\Sigma(1)|}$ has codimension at least two, so $\Pic([Z/G]) =
\Pic^G(\A^{|\Sigma(1)|}) = \bX(G) = N^\vee.$ When $[Z/G]$ is a reduced
orbifold, meaning that the generic stabilizer is trivial (ie the
action of $G$ is effective) then $Pic(\sX(\stSig)) = Pic(X(\Sigma)$. In
this case if we let $L_i$ be the line bundle on $X(\Sigma)$ corresponding
to the ray $\rho_i$ then $Pic(\sX(\stSig))$ is generated by the
pullback of these line bundles to $\sX(\stSig)$ which we also denote
by $L_i$. If we identify ${\bf X}(\G_m^n) = \Z^n$ and let $e_i$ be
character corresponds to the $i$-th standard basis vector, then $L_i$
corresponds to the image of $e_i$ in ${\bf X}(G)$ under the surjective
morphism $\Z^n \to {\bf X}(G)= N^\vee$.

In general the action of $G$ on $Z$ is not effective. However, if we
assume that $b_1 \ldots , b_n$ generate $N_{tors}$ then Jiang and Tseng
\cite[Section 3]{JiTs:10} prove that the action of $G$ on $\A^n$ corresponds to a
map $G \stackrel{\iota} \to G \stackrel{i} \hookrightarrow \G_m^n$
where $\iota$ is an isogeny and $i$ is an immersion.  The quotient
$[Z/_{i} G]$ where the subscript $_i$ indicate that the action is the
faithful action of $G$ produces the {\em rigidification} of
$\sX(\stSig)_{red}$ of $\sX(\stSig)$ in the sense of \cite{AGV:08}.
Let $\LL_i$ be the image of $e_i$ under the composite map $\Z^n \to
\bX(G) \stackrel{\iota^*} \to \bX(G)$. Equivalently, $\LL_i$ is the
pullback of the line bundle corresponding to the ray $\rho_i$ in the
underlying toric variety. If $z_1, \ldots , z_n$ are coordinates on
$\A^n$ then ${\mathcal O}(z_i) = \LL_i$.

\begin{lemma}\cite[Section 3.3] {JiTs:10} \label{lem:Li-gen-char-gp}
If $\beta_1, \ldots , \beta_n$ generate $N_{tors}$ then
$\Pic(\cX(\stSig))$ is generated by the line bundles $L_1, \ldots L_n$.
\end{lemma}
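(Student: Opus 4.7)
The plan is to identify $\Pic(\cX(\stSig))$ with a character group and then exploit the factorization $G\xrightarrow{\iota}\overline G\xrightarrow{i}\G_m^n$ (isogeny followed by closed immersion) produced by the hypothesis. Since $\Sigma$ spans $N_\R$, the complement $\A^n\setminus Z$ has codimension at least two, so the equivariant Picard computations of Section~\ref{sec:equivChow} give $\Pic(\cX(\stSig))=\Pic^G(Z)=\Pic^G(\A^n)=\bX(G)$. Using the presentation of $G$ as $\ker E^\beta$, the character group $\bX(G)$ is the cokernel of $\beta_{aug}^\vee\colon\Z^{d+r}\to\Z^{n+r}$, with generators $e_1,\dots,e_n$ together with auxiliary generators $s_1,\dots,s_r$ coming from the torsion factors of $N$, modulo the ``free'' relations $\sum_i b_{i,j}e_i=0$ for $j\le d$ and the ``torsion'' relations $\sum_i b_{i,d+l}e_i+m_ls_l=0$ for $l\le r$. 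Each $\mathcal L_i$ is the class of $e_i$, so the lemma is equivalent to the assertion that every $s_l$ is a $\Z$-combination of the $e_i$'s modulo these relations.

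The factorization from the hypothesis immediately handles the surjection $\Z^n\twoheadrightarrow\bX(\overline G)$, since a closed immersion of diagonalizable groups dualizes to a surjection of character groups; in particular the $\mathcal L_i$'s already generate $\bX(\overline G)$. To transfer this generation to $\bX(G)$, I would use the hypothesis a second time: since $b_1,\dots,b_n$ generate $N_{tors}$, for each $l$ one can choose integers $a_{i,l}$ so that $\sum_i a_{i,l}b_i$ equals the $l$th standard generator of $N_{tors}$ in $N$. Coordinatewise this produces integer identities $\sum_i a_{i,l}b_{i,j}=0$ for $j\le d$ together with congruences $\sum_i a_{i,l}b_{i,d+l'}\equiv\delta_{l,l'}\pmod{m_{l'}}$. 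Inserting these into the $r$ torsion relations and inverting the resulting $r\times r$ system, which is upper-triangular modulo torsion, expresses each $s_l$ as an integer combination of the $e_i$'s.

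The main obstacle is precisely this last integer inversion: one needs the coupled system of congruences to be invertible over $\Z$, not only over $\Q$. The cleanest packaging is to compute the Smith normal form of $\beta_{aug}$ and observe that under the generation hypothesis all invariant factors coming from the torsion block are units, so the $s_l$-directions become redundant in the cokernel. This yields $\bX(G)=\langle\mathcal L_1,\dots,\mathcal L_n\rangle$ and completes the argument.
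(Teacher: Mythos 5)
Your argument proves the wrong statement. You identify the $L_i$ of the lemma with the classes of the characters $e_i$ restricted to $G$, i.e.\ with $\LL_i=\mathcal{O}(z_i)$, and then try to show that these generate $\bX(G)=\Pic(\cX(\stSig))$. That assertion is false whenever $N$ has torsion, and no hypothesis on the $b_i$ can repair it. In your own presentation $\bX(G)=\Z^{n+r}/R$ with $R$ generated by $\sum_i b_{i,j}e_i$ for $j\le d$ and $\sum_i b_{i,d+l}e_i+m_ls_l$ for $l\le r$, the only relations touching the $s_l$ do so through the multiples $m_ls_l$; hence the quotient of $\bX(G)$ by the subgroup generated by the images of $e_1,\dots,e_n$ is $\Z^r/(m_1\Z\oplus\cdots\oplus m_r\Z)\cong\operatorname{Ext}^1(N,\Z)\cong N_{tors}$, which is nonzero exactly when $N$ has torsion. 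Concretely, for $\PP(6,4)$ of Example~\ref{ex:p64intro} one has $\LL_1=\mathcal{O}(6)$ and $\LL_2=\mathcal{O}(4)$, which generate only the index-two subgroup $2\Z\subset\Z=\Pic(\PP(6,4))$, whereas the lemma's $L_1,L_2$ have $c_1(L_1)=3t$ and $c_1(L_2)=2t$ and do generate. This is precisely why the paper distinguishes $L_i$ (with $x_i=c_1(L_i)$) from $\LL_i$ (with $\tilde x_i=c_1(\LL_i)$) and records the associated formula $\tilde x_i=\sum_k f_{k,i}x_k$ with $\det(f_{k,j})$ nonzero but not a unit.

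The difficulty you flag at the end is therefore not a technical inversion problem but a genuine obstruction: your congruences only ever express $m_ls_l$, never $s_l$ itself, as a combination of the $e_i$ modulo $R$. The Smith normal form of $\beta_{aug}$ cannot rescue this, because SNF changes basis in the target $\Z^{n+r}$ as well as in the source and so carries no information about whether the images of the particular elements $e_1,\dots,e_n$ generate the cokernel. What the hypothesis that $b_1,\dots,b_n$ generate $N_{tors}$ actually buys --- and what \cite{JiTs:10} establish in the cited Section 3.3 --- is an isomorphism of $\Pic(\cX(\stSig))=\bX(G)$ with $\operatorname{coker}(N^*\to(\Z^n)^*)$; the $L_i$ are \emph{defined} as the images of the standard basis under that isomorphism (equivalently, they are obtained by inverting the associated formula), not as the tautological characters $\LL_i$. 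Any complete proof must first construct these $L_i$; note the paper itself supplies no proof, importing the statement from \cite{JiTs:10}.
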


\begin{remark}
Let $x_i  = c_1(L_i)$ and $\tilde{x}_i = c_1(\LL_i)$, then
$$	\tilde{x}_i=\sum_{k=1}^n f_{k,i}x_i$$
for some integers $f_{k,i}$ such that $\det(f_{k,j}) \neq 0$. Jiang and Tseng refer to this equation as the {\em associated formula} for the $\tilde{x}_i$.
\end{remark}	
\begin{example}
  The stack $\cX(\stSig)=\PP(6,4)$ of Example~\ref{ex:p64intro} has
  distinguished points $b_1=(2,1)$ and $b_2=(-3,0)$ in
  $N=\Z\oplus\Z/2$. The underlying toric variety is $\PP(3,2)$, with
  line bundles $L_1$ and $L_2$ having Chern classes $x_1:=c_1(L_1)=3t$
  and $x_2:=c_1(L_2)=2t$ where $t$ is the first Chern class
of the defining character of $G =\C^\times$. 
The associated formula of the stacky fan
  $\stSig$ is \beq 
\tilde{x}_1=2x_1 = 6t 
\quad\quad \tilde{x}_2=
2x_2= 4t 
\eeq
\end{example}

\begin{example}\label{ex:noneffective}
	Now let $N=\Z\oplus\Z/2$ and let $\Sigma$ be the complete fan in $N_\Q\cong \Q$ but define $\beta$ by the distinguished points $b_1=(2,0)$ and $b_2=(-3,0)$ in $N$ so the image of $\beta_1, \beta_2$ does not generate $N_{tors}$
In this case our augmentation homorphism is 
		\beq
		\left[\begin{array}{rrr} 2&-3&0 \\ 0&0&2 \end{array}\right]:\Z^3\to\Z^2
		\eeq
	Hence,
	\begin{align*}
	G=\ker(E^\beta)=&\left\{(\gamma_1,\gamma_2,s_1)\in(\C^\times)^3\st {\gamma_1}^2{\gamma_2}^{-3}=1, {s_1}^2=1 \right\}.
	\end{align*}
so $G = \C^\times \times \mu_2$ with $G$ acting by 
$(\lambda, \mu) (z_1, z_2) = (\lambda^2 z_1, \lambda^3 z_2)$.
Thus $\cX(\stSig) = \PP(2,3) \times B\mu_2$ and $\Pic(\cX(\stSig)) = \Z \oplus \Z_2$. Here we have $L_i = {\mathcal L_i}$ but $L_1, L_2$ do not generate 
the full Picard group because they do not detect the torsion.
\end{example}

\subsection{The Chow ring of a toric stack}
Here, we use Propositions \ref{prop.linearcomp.faithful} and
 \ref{prop.linearcomp.isogeny} to give simple proofs of the results 
of \cite{Iwa:09, JiTs:10} on the integral
Chow rings of toric stacks. 
Note that, unlike \cite{JiTs:10} we do not
require that the stack be proper but we require that every maximal
cone of $\Sigma$ has dimension $d = \rank N$. In particular this
implies that the vectors $b_1, \ldots , b_n$ span $N \otimes \R$.

\begin{proposition}\cite[Theorem 2.2]{Iwa:09} \label{prop:Iwa} 
If $N$ is torsion free and $b_1, \ldots ,  b_n$ span $N \otimes \R$
then $A^*(\sX(\stSig) = \Z[x_1, \ldots x_n]/\left(I_{\stSig} + C(\stSig)\right)$.

where $I_\stSig$ is the ideal generated by monomials 
$$\{x_{i_1}, \ldots , x_{i_k}: \rho_{i_1} , \ldots ,\rho_{i_k} \text{do not lie in a cone of }\Sigma\}.$$

and 
$C(\stSig)$ is the ideal generated by the linear relations
$$\left(\sum_{i=1}^n \theta(\ol{b}_i) x_i\right)_{\theta \in N^*}$$
\end{proposition}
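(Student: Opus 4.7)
The plan is to realize $\cX(\stSig)$ as a quotient of the type handled by Proposition~\ref{prop.linearcomp.faithful} and then identify the two ideals appearing in the resulting presentation with $C(\stSig)$ and $I_\stSig$.

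First I would observe that by Remark~\ref{rmk:G}, since $N$ is torsion free the group $G$ is defined by \eqref{eq:def2-G} and $\cX(\stSig) = [Z/G]$ where $Z = \A^n \smallsetminus V(J_\Sigma) \subset \A^n$ is the complement of a union of $G$-invariant coordinate subspaces in the faithful representation $\A^n$ of $G$. In particular $A^*(\cX(\stSig),\Z) = A^*_G(Z)$, and Proposition~\ref{prop.linearcomp.faithful} applies.

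Next I would decompose $V(J_\Sigma)$ into coordinate linear subspaces. A point of $\A^n$ lies in $Z$ iff $\{i\st z_i=0\}\subseteq\{i\st \rho_i\subset\sigma\}$ for some $\sigma\in\Sigma$; consequently the coordinate subspace $V_S = V(\{z_i\}_{i\in S})$ is contained in $V(J_\Sigma)$ precisely when the set of rays $\{\rho_i\st i\in S\}$ does not lie in a common cone. Applying Proposition~\ref{prop.linearcomp.faithful} to these $V_S$ (it suffices to take those $S$ that are minimal with this property) produces exactly the monomial generators of $I_{\stSig}$, so the Stanley--Reisner--type summand is $I_\stSig$.

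Finally I would identify the linear ideal $C(G)$ with $C(\stSig)$. Because $N$ is torsion free and the $b_i$ span $N_\R$, the dual $\beta^*\colon N^* \to (\Z^n)^*$ is injective, so $T_\beta\colon T_L \to T_N$ is surjective and dualizing the exact sequence $1\to G\to T_L\to T_N\to 1$ gives
\begin{equation*}
0 \to N^* \xrightarrow{\beta^*} (\Z^n)^* \to \bX(G) \to 0.
\end{equation*}
Since $\beta^*(\theta) = \sum_i \theta(b_i)\, e_i^*$, taking $c_1$ shows that the relations among the generators $x_i = c_1(e_i^*)$ of $A^*_G(pt)$ are exactly $\sum_i \theta(b_i)\, x_i = 0$ for $\theta\in N^*$, which is the definition of $C(\stSig)$. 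Combining the two identifications yields the claimed presentation.

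The step I expect to require the most care is the identification of $V(J_\Sigma)$ with the correct union of coordinate subspaces, since the irrelevant ideal is given by monomial generators indexed by \emph{cones} while the hypothesis of Proposition~\ref{prop.linearcomp.faithful} asks for an explicit list of linear subspaces; one must verify that running through the minimal generators of $J_\Sigma$ (equivalently, through the minimal subsets of rays not contained in any cone) gives exactly the monomials defining $I_\stSig$.
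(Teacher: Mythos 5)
Your proposal is correct and follows essentially the same route as the paper: both arguments reduce the computation to Proposition~\ref{prop.linearcomp.faithful}, identify the linear ideal $C(G)$ with $C(\stSig)$ by recognizing $\bX(G)$ as the cokernel of $\beta^*\colon N^*\to(\Z^n)^*$, and identify the monomial ideal with $I_\stSig$ via the decomposition of $V(J_\Sigma)$ into coordinate subspaces. The one substantive difference is in that last step, and there your version is the more careful one. The paper asserts that $V(J_\Sigma)$ is the union of the subspaces $L_\sigma=V(\{z_i\st \rho_i\notin\sigma\})$ as $\sigma$ runs over the maximal cones; taken literally this is not the irreducible decomposition (for $\PP^1$ it would give $V(z_1)\cup V(z_2)$ and hence the ideal $(x_1,x_2)$, whereas $V(J_\Sigma)=\{(0,0)\}$ and the correct monomial ideal is $(x_1x_2)$). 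Your decomposition of $V(J_\Sigma)$ into the subspaces $V_S$ indexed by the (minimal) subsets $S$ of rays not contained in any cone is the correct one, and it is exactly what is needed to match the stated generators of $I_\stSig$; you are also right to flag this as the step requiring the most care. The treatment of the linear relations is the same in both arguments, with your added remark about surjectivity of $T_\beta$ being harmless (the cokernel description of $\bX(G)$ holds in any case by exactness of the character-group functor on diagonalizable groups).
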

\begin{proof}
We use Proposition \ref{prop.linearcomp.faithful}. 

The definition of $G$ implies that $\bX(G)$ is
cokernel of the map $N^*=(\Z^d)^* \stackrel{B^*} \to (\Z^n)^*$.  Thus 
$\bX(G)$ is the quotient of $\Z^n$ by the 
linear relations $\{\sum_{i} \theta(b_i) x_i\}_{\theta \in N^*}$,
so $C(\stSig)= C(G)$ where $C(G)$ is as in Proposition \ref{prop.linearcomp.faithful}.

If we use coordinates $z_1, \ldots , z_n$ on $\A^n$ then
$V(J_{\Sigma})$ decomposes as the union of the linear subspaces
$\{L_\sigma\}=
V(z_{k_1}, \ldots , z_{k_r})$ where $\rho_{k_1}, \ldots \rho_{k_r}$
are the rays of $\Sigma(1) \smallsetminus \sigma(1)$ and $\sigma$ runs through all maximal cones. Thus, 
$I_\stSig$ is exactly the ideal generated by the products
$x_{k_1}x_{k_2} \ldots x_{k_{r}}$ as in Proposition
  \ref{prop.linearcomp.faithful}.

\end{proof}

\begin{proposition}\cite[Theorem 1.1]{JiTs:10} \label{prop:JiangTseng} 
Now let $N$ be an arbitrary finitely generated abelian group
and assume that $b_1, \ldots , b_n$ generate $N_{tors}$
Then $$A^*(\sX(\stSig) = \Z[x_1, \ldots x_n]/\left(I_{\stSig} + C(\stSig)\right).$$

where $I_\stSig$ is the ideal generated by monomials 
$$\{\widetilde{x}_{i_1}, \ldots , \widetilde{x}_{i_k}: \rho_{i_1} , \ldots ,\rho_{i_k} \text{do not lie in a cone of }\Sigma\}.$$
where $\tilde{x}_i$ is the first Chern class of the image of the character
$e_i$ in ${\mathbf X}(G)$
and 
$C(\stSig)$ is the ideal generated by the linear relations
$$\left(\sum_{i=1}^n \theta(\overline{b}_i) x_i\right)_{\theta \in N^*}$$

\end{proposition}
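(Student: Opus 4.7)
The plan is to mimic the proof of Proposition \ref{prop:Iwa}, substituting Proposition \ref{prop.linearcomp.isogeny} for Proposition \ref{prop.linearcomp.faithful}. Under the present hypothesis that $b_1,\ldots,b_n$ generate $N_{tors}$, the discussion in Section \ref{sec.charline} (following \cite[Section 3]{JiTs:10}) ensures that the representation $G\to\G_m^n$ factors as $G\stackrel{\iota}{\to}\ol{G}\hookrightarrow\G_m^n$, with $\iota$ an isogeny and the second map a closed immersion. Proposition \ref{prop.linearcomp.isogeny} then identifies $A^*(\sX(\stSig))=A^*_G(Z)$ with a quotient $\Z[x_1,\ldots,x_n]/(C(G)+\mathcal{J})$, where $\mathcal{J}$ is the ideal generated by the monomials $\widetilde{x}_{k_1}\cdots\widetilde{x}_{k_r}$ indexed by the irreducible components of $V(J_\Sigma)$.

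Two identifications remain. First, one checks that $C(G)=C(\stSig)$. From the explicit description of $G$ in \eqref{eq:def-G}, the character group $\bX(G)$ is presented as a quotient of $\Z^{n+r}$ by the rows of $\beta_{aug}$, and an element $(a_1,\ldots,a_n)\in\Z^n$ lies in the kernel of $\Z^n\to\bX(G)$ iff the vector $(a_1,\ldots,a_n,0,\ldots,0)\in\Z^{n+r}$ lies in the row span of $\beta_{aug}$. Because each $m_l\ne 0$, any torsion row of $\beta_{aug}$ (those indexed by $l\in\{1,\ldots,r\}$) is forced to enter with zero coefficient, so only the first $d$ rows can contribute, and one reads off $a_i=\theta(b_i)$ for some $\theta\in N^*=\Hom(N,\Z)$. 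Since any such $\theta$ automatically factors through $N/N_{tors}$, we have $\theta(b_i)=\theta(\ol{b}_i)$, and the resulting relations are exactly those defining $C(\stSig)$. Second, the monomial ideal $\mathcal{J}$ coincides with the stacky Stanley--Reisner ideal $I_\stSig$ by the same combinatorial bookkeeping as in the proof of Proposition \ref{prop:Iwa}: $V(J_\Sigma)$ is the union of the coordinate subspaces indexed by the maximal cones of $\Sigma$, and the resulting products of the $\widetilde{x}_i$ generate the same ideal as the non-face family appearing in the statement.

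The main obstacle is the identification $C(G)=C(\stSig)$; once one has carefully traversed the augmentation matrix $\beta_{aug}$ and confirmed that its torsion rows do not create new linear relations among $x_1,\ldots,x_n$, the remainder of the argument is a direct transcription of the torsion-free case. This is precisely why the mild hypothesis that $b_1,\ldots,b_n$ generate $N_{tors}$ is enough: it guarantees the isogeny factorization of the $G$-action, which is the only input beyond the torsion-free argument.
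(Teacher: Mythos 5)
Your proof is correct and follows essentially the same route as the paper's: invoke the Jiang--Tseng isogeny factorization $G\to \ol{G}\hookrightarrow \G_m^n$ guaranteed by the hypothesis on $N_{tors}$, and then apply Proposition \ref{prop.linearcomp.isogeny}. The only difference is that you spell out the identification $C(G)=C(\stSig)$ via the rows of $\beta_{aug}$, a verification the paper leaves implicit in its citation of Jiang--Tseng.
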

\begin{proof}
  The key point, proved by Jiang and Tseng, is that the assumption
  that $b_1, \ldots , b_n$ means that the representation $G \to
  \G_m^n$ factors as an isogeny $G \to G$ followed by faithful
  representation $G \hookrightarrow \G_m^n$ corresponding to the map
  $\Z^n \to \overline{N}= N/N_{tors}$. The proposition now follows
  from Proposition \ref{prop.linearcomp.isogeny}.
\end{proof}

\section{The interia of a toric stack} \label{sec:toricinertia}

\subsection{The box of a stacky fan}
Let $\stSig = (N, \Sigma, \beta)$ be a stacky fan such that $\Sigma$ spans
$N \otimes \Q$. Following \cite{BCS:05} we introduce $\Boxx(\stSig)$, which plays an important role in 
the presentations for our inertial Chow rings. 

	Let $\ol{N}:=N/N_{tor}$ and for any $v\in N$, we use $\ol{v}$ to denote the image of $v$ in the natural projection $N\to\ol N$. In a slight abuse of notation, we will sometimes refer to $\ol{v}$ as being in $\Sigma$. 
	
	Denote by $\sigma(\ol{v})$ the unique minimal cone
        $\sigma\in\Sigma$ which contains $\ol{v}$. More generally,
        $\sigma(\ol{v_1},...,\ol{v_k})$ is the minimal cone in
        $\Sigma$ containing each of $\ol{v_1},\dots,\ol{v_k}$.
	
\begin{definition}\label{def:box}
  For any cone $\sigma\in\Sigma$, we define $\Boxx(\stsig)$ to be the
  set of all elements $v\in N$ such that $\ol{v}=\sum_{i|\rho_i \in
    \sigma(\ol{v})(1)} q_i\ol{b_i}$ with $q_i \in [0,1) \cap \Q$
(Here we use $\sigma(\ol{v})(1)$ to denote the rays of the cone $\sigma(\ol{v})$.)

Define $\Boxx(\stSig)$ to be the union of $\Boxx(\stsig)$ for all
  cones $\sigma\in\Sigma$.
\end{definition}
	
For any cone $\sigma\in\Sigma$, define
\begin{equation}\label{eq:defNsigma}
N_\sigma:=\vr{b_i\st\rho_i\in\sigma}
\end{equation}
as a subgroup of $N$, and define $N(\sigma)$ be the finite quotient
group $N/N_\sigma$. The set $\Boxx(\stsig)$ has a natural bijection
with $N(\sigma)$, 
and gives a choice of coset representatives for the quotient 
group.

\begin{example}\label{ex:p64box}
  Let $\cX(\stSig)$ be the toric stack $\PP(6,4)$, constructed in
  Example~\ref{ex:p64intro}.  The squares in Figure~\ref{fig:p64}
  represent the eight elements of $\Boxx(\stSig)$. For example, since
  $\ol N=\Z$, we have that $v=(1,1)\in N$ is in $\Boxx(\stSig)$
  because $\ol{v}=\frac12\ol{b_1}\in\Boxx(\sigma_1)$, where $\sigma_1$
  is the one-dimensional cone in the positive direction on the
  horizontal axis.
	
	Notice also that $(5,1)\in N$, but $(5,1)\notin \Boxx(\stSig)$. The box element which is equivalent to $(5,1)$ in $N(\sigma_1)$ is $(1,1)$, since $(5,1)=2b_1+(1,1)\in N$.
\end{example}

 \begin{figure} \label{fig:p64box}
 	\centering
 	\includegraphics[width=3.8in]{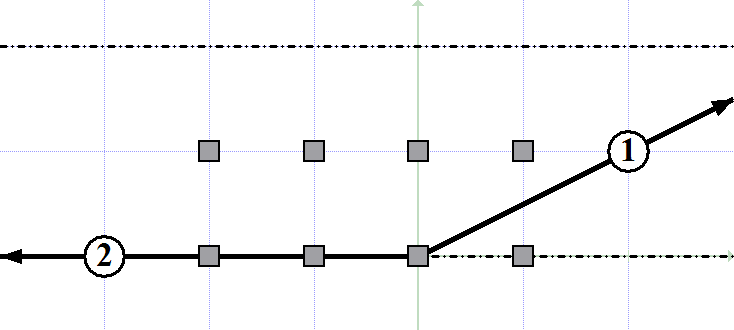}
 	\caption{The stacky fan of Example~\ref{ex:p64intro}, which yields the toric stack $\PP(6,4)$. The vertical axis here is representing $\Z/2$; that is to say, the two dotted lines are identified. The squares are elements of $\Boxx(\stSig)$.}
 	\label{fig:p64}
 \end{figure}

The set $\Boxx(\stsig)$ inherits the abelian group structure of
$N(\sigma)$. We give an example to illustrate how the addition works.
	
\begin{example}\label{ex:boxadd}
	We'll revisit the stacky fan of Example~\ref{ex:p64intro}
	.  We have $\Boxx(\sigma_1)=\{0,v_1, v_2, v_3\}$, where $0=(0,0)$, $v_1=(1,1)$, $v_2=(1,0)$ and $v_3=(0,1)$. Then we have the following addition table for $\Boxx(\boldsymbol{\sigma_1})$:
	\begin{equation}\label{eq:p64boxaddtable}
	\begin{array}{c|cccc}
		+&0&v_1&v_2&v_3\\
		\cline{1-5}
		0 &0 &v_1 &v_2 &v_3\\
		v_1 &v_1 & v_3 &0 & v_2\\
		v_2&v_2 &0 &v_3 & v_1 \\
		v_3& v_3& v_2& v_1&0
	\end{array}
	\end{equation}
\end{example}

\begin{proposition}\cite[Lemma 4.6]{BCS:05}\label{prop:box-corr}
	If $\Sigma$ is a complete fan, then the elements $v\in\Boxx(\stSig)$ are in one-to-one correspondence with elements $g\in G$ which fix a point of $Z$.
\end{proposition}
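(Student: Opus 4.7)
The plan is to exhibit a bijection between $\Boxx(\stSig)$ and the set $G_{\mathrm{fix}}$ of elements of $G$ which fix some point of $Z$. Since $G$ acts on $Z=\A^n\smallsetminus V(J_\Sigma)$ via $(\gamma,s)\cdot(z_1,\ldots,z_n) = (\gamma_1 z_1,\ldots,\gamma_n z_n)$, the coordinates $s_l$ do not appear in the action, so $(\gamma,s)$ fixes a point of $Z$ iff there is some $z\in Z$ with $\gamma_i = 1$ for every $i$ with $z_i\neq 0$. Using the description of $Z$ as the complement of $V(J_\Sigma)$, this is equivalent to the existence of a cone $\sigma\in\Sigma$ with $\gamma_i = 1$ for every $\rho_i \notin \sigma(1)$.

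To construct the forward map $\phi\colon\Boxx(\stSig)\to G_{\mathrm{fix}}$, take $v\in\Boxx(\sigma(\ol v))$ and write the unique expression $\ol v = \sum_{\rho_i\in\sigma(\ol v)(1)} q_i\ol b_i$ with $q_i\in[0,1)\cap\Q$ (setting $q_i=0$ for the remaining rays). Lift $v$ to $\tilde v\in\Z^{d+r}$ using the canonical representatives $v_{d+l}\in\{0,\ldots,m_l-1\}$ of its torsion coordinates. In $\Q^{d+r}$, the difference $\tilde v - \sum_i q_i b_i$ has vanishing first $d$ components (because $\ol v = \sum q_i \ol b_i$ in $\ol N$), and the remaining $r$ components can be written uniquely as $(m_1 c_1,\ldots,m_r c_r)$ with $c_l\in[0,1)$. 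Set $\phi(v)=\bigl(e^{2\pi\sqrt{-1}\,q_1},\ldots,e^{2\pi\sqrt{-1}\,q_n},\,e^{2\pi\sqrt{-1}\,c_1},\ldots,e^{2\pi\sqrt{-1}\,c_r}\bigr)$. The integrality $\tilde v\in\Z^{d+r}$ together with the defining equations of $G$ in \eqref{eq:def-G} shows $\phi(v)\in G$, and because $\gamma_i = 1$ whenever $\rho_i\notin\sigma(\ol v)(1)$, it lies in $G_{\mathrm{fix}}$.

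For the inverse $\psi$, given $(\gamma,s)\in G_{\mathrm{fix}}$, write $\gamma_i = e^{2\pi\sqrt{-1}\,q_i}$ and $s_l = e^{2\pi\sqrt{-1}\,c_l}$ uniquely with $q_i,c_l\in[0,1)$. The hypothesis forces $\{\rho_i : q_i>0\}$ to be contained in the rays of some cone of $\Sigma$, and since $\Sigma$ is simplicial these rays span a face $\tau\in\Sigma$. The first block of equations defining $G$ gives $\sum_i q_i b_{i,j}\in\Z$ for every $j\leq d$, so $\ol v := \sum_i q_i\ol b_i$ lies in $\ol N$; the second block gives $m_l c_l + \sum_i q_i b_{i,d+l}\in\Z$, producing torsion coordinates $v_{d+l}\in\Z/m_l$ and hence a well-defined $v\in N$. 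By construction $\ol v$ lies in the relative interior of $\tau$ with rational coefficients in $[0,1)$, so $\sigma(\ol v)=\tau$ and $v\in\Boxx(\tau)$.

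Checking that $\phi\circ\psi$ and $\psi\circ\phi$ are the identity is then a direct computation: at each step the representation of a rational number modulo $\Z$ with values in $[0,1)$ is unique, so the maps invert one another. The main subtlety, and the place where the write-up will require the most care, is the bookkeeping for the torsion: the coordinates $s_l$ of $G$ are invisible in the action on $Z$, so they cannot be read off directly from a fixed point of $Z$, yet they are uniquely determined by the torsion part of $v$ via the last $r$ rows of $\beta_{aug}$. Keeping the choices of lifts $v_{d+l}\in\{0,\ldots,m_l-1\}$ and the representatives $c_l\in[0,1)$ consistent is the only nontrivial step.
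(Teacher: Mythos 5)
Your proof is correct and takes essentially the same route as the paper's: it identifies the elements of $G$ fixing a point of $Z$ as those with $\gamma_i=1$ off the rays of some cone, sends $v$ with $\ol{v}=\sum_i q_i\ol{b_i}$ to $\gamma_i=e^{2\pi\sqrt{-1}\,q_i}$, and pins down the $s_l$ from the torsion coordinates via the last $r$ rows of $\beta_{aug}$ — the paper merely presents the correspondence in the direction $g\mapsto v$ first and then verifies surjectivity by the same construction you use for $\phi$. One cosmetic slip worth fixing in the write-up: the components $v_{d+l}-\sum_i q_i b_{i,d+l}$ need not lie in $[0,m_l)$, so your $c_l$ need not lie in $[0,1)$; this is harmless because only $e^{2\pi\sqrt{-1}\,c_l}$ (i.e., $c_l$ modulo $\Z$) enters the definition of $\phi(v)$ and the verification that it lies in $G$.
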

\begin{proof}
	Let $G_\sigma$ be the subgroup of $G$ defined by the equations $\gamma_i=1$ for all $\rho_i\notin\sigma$. That is, 
	\begin{equation}
	G_\sigma=\left\{(\gamma_1,\dots,\gamma_n,s_1,\dots,s_r)\left|\, 
		\begin{aligned}\label{eq:Gsig}
		&\prod_{\rho_i\in\sigma} {\gamma_i}^{b_{i,j}}=1\text{ for }1\leq j\leq d, \text{ and}\\ 
		{s_l}^{m_l}&\cdot\prod_{\rho_i\in\sigma} {\gamma_i}^{b_{i,d+l}}=1\text{ for }1\leq l\leq r,\text{ and} \\
		& \gamma_i=1\text{ for all }\rho_i\notin\sigma
		\end{aligned}
	\right.\right\}.
	\end{equation}
	The set of elements of $G$ which fix a point of $Z$ will be  $\bigcup_{\sigma\in\Sigma}G_\sigma$.
	
	We'll prove the statement for $g\in G_\sigma$ and $v\in \Boxx(\stsig)$; taking unions will complete the proof.
	
	For $1\leq j\leq d$, recall that $b_{i,j}$ denotes the $j$-th entry of the distinguished point $b_i$ corresponding to the ray $\rho_i$. We claim that for $g=(\gamma_1,\dots,\gamma_n,s_1,\dots,s_r)\in G_\sigma$, the correspondence is $g\leftrightarrow v$, where $v$ is given by
	\begin{equation}\label{eq:gvcorr}
		\left(
		\sum_{i=1}^n b_{i,1}\frac{\Log(\gamma_i)}{2\pi\sqrt{-1}},
		...,
		\sum_{i=1}^nb_{i,d}\frac{\Log(\gamma_i)}{2\pi\sqrt{-1}},
		\dfr{m_1\Log(s_1)-\Log({s_1}^{m_1})}{2\pi\sqrt{-1}},
		...,
		\dfr{m_1\Log(s_r)-\Log({s_r}^{m_r})}{2\pi\sqrt{-1}}
		\right),
	\end{equation}
	with the convention that for any $\zeta\in\C^\times$ with $|\zeta|=1$, we use the principal  branch of the natural logarithm, $0\leq\dfr{\Log \zeta}{2\pi\sqrt{-1}}<1$. 
	
	We first show that the way we defined $v$ makes it an element of $N$. First, we have
	\begin{equation}\label{eq:392403}
	\, \dfr{1}{2\pi\sqrt{-1}}\sum_{i=1}^n b_{i,j}\Log (\gamma_i)
	\equiv \, \dfr{1}{2\pi\sqrt{-1}}\sum_{i=1}^n \Log\left({\gamma_i}^{b_{i,j}}\right) 
	\equiv \,0\modd1.
	\end{equation}
	We also have $(m_l\Log(s_l)-\Log({s_l}^{m_l}))/2\pi\sqrt{-1}\in\{0,\dots,m_l-1\}=\Z/m_l$. So we have $v\in N$.
	
	Next, we show that there exist rational numbers $q_i\in[0,1)\cap\Q$ such that $\ol{v}=\sum_{\rho_i\in\sigma}q_i\ol{b_i}$. Indeed, 
	\beq
	\ol{v}= \left(
		\sum_{i=1}^n b_{i,1}\frac{\Log(\gamma_i)}{2\pi\sqrt{-1}},
		\dots,
		\sum_{i=1}^nb_{i,d}\frac{\Log(\gamma_i)}{2\pi\sqrt{-1}}
		\right),
	\eeq
	and if we set $q_i:=\frac{\Log(\gamma_i)}{2\pi\sqrt{-1}}$, we have
	\beq
	\ol{v}= \left(
		\sum_{i=1}^n b_{i,1}q_i,
		\dots,
		\sum_{i=1}^nb_{i,d}q_i
		\right) = \sum_{i=1}^nq_i\ol{b_i} = \sum_{\rho_i\in\sigma}q_i\ol{b_i},
	\eeq
	with the last equality due to the fact that $q_i=0$ whenever $\rho_i\notin\sigma$. So $v\in\Boxx(\stsig)$.
	
	It remains to show that $g\to v$ is onto. Take any $v\in\Boxx(\stsig)$. So we have 
	\beq
	v=\left(\sum_{\rho_i\in\sigma}q_ib_{i,1} , \dots, \sum_{\rho_i\in\sigma}q_ib_{i,d}, p_1,\dots,p_r\right),
	\eeq
	for some $q_i\in[0,1)\cap\Q$, $1\leq i\leq n$ and for some $p_l\in\{0,\dots,m_l-1\}=\Z/m_l$, $1\leq l\leq r$. We will show there is an element of $G_\sigma$ which is sent to $v$ under the correspondence in (\ref{eq:gvcorr}).
	
	Set $\gamma_i:=e^{2\pi\sqrt{-1}q_i}$ and $s_l:=\left(\prod_{i=1}^n e^{-2\pi\sqrt{-1}q_ib_{i,d+l}}\right)^{1/m_l}e^{2\pi\sqrt{-1}p_l/m_l}$. We claim that $(\gamma_1,\dots,\gamma_n,s_1,\dots,s_r)$ is the element of $G_\sigma$ such that $g\leftrightarrow v$.
	
	Indeed, for $1\leq j\leq d$ we have 
	\beq
	\prod_{\rho_i\in\sigma} {\gamma_i}^{b_{i,j}} 
	= \prod_{\rho_i\in\sigma} \left(e^{2\pi\sqrt{-1}q_i}\right)^{b_{i,j}}  
	=\exp\left(2\pi\sqrt{-1}\sum_{\rho_i\in\sigma} q_ib_{i,j}\right)=1;
	\eeq
	for $1\leq l\leq r$ we have
	\begin{align*}
	{s_l}^{m_l}\prod_{\rho_i\in\sigma}{\gamma_i}^{b_{i,d+l}} 
	&= \left(\left(\prod_{i=1}^n e^{-2\pi\sqrt{-1}q_ib_{i,d+l}}\right)^{1/m_l}e^{2\pi\sqrt{-1}p_l/m_l}\right)^{m_l}\prod_{\rho_i\in\sigma}{\gamma_i}^{b_{i,d+l}}   \\
	&= \left(\prod_{i=1}^n e^{-2\pi\sqrt{-1}q_ib_{i,d+l}}\right)e^{2\pi\sqrt{-1}p_l}\prod_{\rho_i\in\sigma}e^{2\pi\sqrt{-1}q_ib_{i,d+l}}   \\
	&= e^{2\pi\sqrt{-1}p_l} \\
	&= 1	;
	\end{align*}
	and for $\rho_i\notin\sigma$ we clearly have $q_i=0$ and thus $\gamma_i=1$.

\end{proof}
	
\begin{remark}
	Along with this statement, we have that every non-zero box element corresponds with a twisted sector in $I_GZ$.
\end{remark}

\begin{example}\label{ex:p64corr}
	Recall the diagram in Figure~\ref{fig:p64} which illustrates the stacky fan for the toric stack $\sX=\PP(6,4)$.
	By Proposition~\ref{prop:box-corr}, we have the following correspondences under (\ref{eq:gvcorr}):
	\beq
	\begin{array}{lcl} 
		\underline{\Boxx(\stSig)} && \underline{G\subset(\C^\times)^3} \\
		v_0=(0,0)&\leftrightarrow&g_0=(1,1,1)\\
		v_1=(1,0) &\leftrightarrow&g_1=(-1,1,e^{\pi\sqrt{-1}/2})\\
		v_2=(1,1) &\leftrightarrow& g_2=(-1,1,e^{3\pi\sqrt{-1}/2})\\
		v_3=(0,1) &\leftrightarrow& g_3=(1,1,-1)\\
		v_4=(-1,0) &\leftrightarrow& g_4=(1,e^{2\pi\sqrt{-1}/3},1)\\
		v_5=(-1,1) &\leftrightarrow& g_5=(1,e^{2\pi\sqrt{-1}/3},-1)\\
		v_6=(-2,0) &\leftrightarrow& g_6=(1,e^{4\pi\sqrt{-1}/3},1)\\
		v_7=(-2,1) &\leftrightarrow& g_7=(1,e^{4\pi\sqrt{-1}/3},-1)\\
		\end{array}
	\eeq
	Note that $\stSig$ contains two one-dimensional cones and one zero-dimensional cone, and by definition 
	\begin{align*}
	\Boxx(\stSig)=&\Boxx(\boldsymbol\sigma_1) \bigcup \Boxx(\boldsymbol\sigma_2)\bigcup \Boxx(\bv 0)\\
	=&\{v_0,v_1,v_2,v_3\} \bigcup \{v_0,v_3, v_4, v_5, v_6, v_7\} \bigcup\{v_0,v_3\}.
	\end{align*}
\end{example}

\begin{remark} \label{rem.doublebox}
The construction of $\ix(\stSig)$ as a quotient $[Z/G]$ implies that
if $v_1, v_2 \in \Boxx(\stSig)$ correspond to elements $g_1, g_2 \in G$
then $Z^{g_1,g_2} \neq \emptyset$ if and only if $v_1, v_2$ lie in a common cone
of the fan $\Sigma$. In particular we can index the double inertia by pairs $\{(v_1, v_2)|
\text{$v_1$ and $v_2$ lie in a common cone of $\sigma$}\}$. We refer to this subset
of\\ $\Boxx(\stSig) \times \Boxx(\stSig)$ as the {\em double box} and denotes it
by $\Boxx^2(\stSig)$.
\end{remark}

\subsection{The Chow group of the inertia as a module for the stacky Stanley-Reisner ring}

All inertial products are defined on the $\SR_{\stSig}$-module
$A^*(I\cX(\stSig))$. (Recall that $\SR_{\stSig}$ is the integral Chow ring of the toric stack $\cX(\stSig)$.) 
Before describing the multiplication for the
inertial Chow rings we recall the module strucutre.  We already know
that components of the inertia stack are indexed by box elements. If
we express $\sX(\stSig)$ as a quotient stack $[Z/G]$ then if $v \in
\Boxx(\stSig)$ corresponds to $g \in G$ then the component of
$I\sX(\stSig)$ corresponding to $v$ is the quotient stack $[Z^g/G]=
\sX(\stSig/\sigma(\vbar))$ where $\sigma(\vbar)$ is the minimal cone
of $\Sigma$ containing $\vbar$ \cite[Lemma 4.6]{BCS:05}.  
The Chow group of this component is a
cyclic $A^*([Z/G])$-module and we can describe this module structure
using a combination of toric and equivariant methods.

\begin{proposition} \label{prop.sectorChow}If $v$ is a box element then there is an isomorphism of $\SR_\stSig$-modules 
$A^*(\cX(\stSig/\sigma(\vbar))) \simeq \SR_\stSig/I_{\stSig,v}$
where $I_{\stSig,v}$ is the ideal of $\SR(\stSig)$ defined by
the relations
$$\{\widetilde{x}_{i_1}, \ldots , \widetilde{x}_{i_k}: \rho_{i_1} , \ldots ,\rho_{i_k} \text{do not lie in a cone of }\Starr(\sigma(\vbar)\}.$$
where $\Starr{\sigma(\vbar)}$ refers to the cones of $\Sigma$ containing $\sigma(\vbar)$. 
\end{proposition}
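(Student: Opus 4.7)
The plan is to compute $A^*([Z^g/G])$ directly by applying Proposition~\ref{prop.linearcomp.isogeny} to a $G$-invariant coordinate subspace of $\A^n$, and then to match the resulting kernel of the pullback from $\SR_{\stSig}$ with the combinatorially defined $I_{\stSig,v}$. By Proposition~\ref{prop:box-corr} the box element $v$ corresponds to $g \in G$ acting on the coordinates by $g\cdot z_i = \gamma_i z_i$, with $\gamma_i \neq 1$ precisely when $\rho_i \in T := \sigma(\vbar)(1)$: the fractional coefficients $q_i$ in \eqref{eq:gvcorr} must be strictly positive on the rays of $\sigma(\vbar)$ by minimality of that cone. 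Hence $Z^g = Z \cap V'$ where $V' = V(z_i : \rho_i \in T)$, and the relevant sector is $[Z \cap V'/G]$.

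Next, I apply Proposition~\ref{prop.linearcomp.isogeny} to the subrepresentation $V'$, removing the $G$-invariant linear subspaces $V' \cap L$ as $L$ ranges over the irreducible components of $V(J_\Sigma)$. Each such $L$ has the form $V(z_i : \rho_i \in S)$ for some minimal subset $S \subseteq \Sigma(1)$ not contained in any cone of $\Sigma$; the intersection $V' \cap L$ is the subspace of $V'$ cut out by $\{z_i : \rho_i \in S \setminus T\}$, proper in $V'$ because $S$ is not a subset of $\sigma(\vbar)(1)$. Its normal bundle in $V'$ has weights $\{e_i : \rho_i \in S \setminus T\}$, so Proposition~\ref{prop.linearcomp.isogeny} yields
\begin{equation*}
A^*([Z^g/G]) \;\cong\; A^*_G \Big/ \Big( C(\stSig) + \big( \textstyle\prod_{\rho_i \in S \setminus T}\widetilde{x}_i \,:\, S \text{ not in a cone of } \Sigma \big) \Big).
\end{equation*}

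Finally, I verify that this ideal reduces in $\SR_\stSig$ to $I_{\stSig,v}$. The ideal $I_\stSig$ is already contained in the new ideal, since $\prod_{\rho_i \in S}\widetilde{x}_i$ is a multiple of $\prod_{\rho_i \in S \setminus T}\widetilde{x}_i$ for any $S$. Each surviving generator $\prod_{\rho_i \in S \setminus T}\widetilde{x}_i$ lies in $I_{\stSig,v}$ because $(S \setminus T)\cup T \supseteq S$ fails to lie in any cone of $\Sigma$, so $S \setminus T$ lies in no cone of $\Starr(\sigma(\vbar))$. Conversely, any generator $\prod_{\rho_i \in S'}\widetilde{x}_i$ of $I_{\stSig,v}$ satisfies that $S' \cup T$ is not in a cone of $\Sigma$; setting $S := S' \cup T$ gives a set not in a cone with $S \setminus T \subseteq S'$, so the presentation generator attached to $S$ divides $\prod_{\rho_i \in S'}\widetilde{x}_i$. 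The $\SR_\stSig$-module structure on both sides is the one inherited from ring multiplication via the pullback. The main delicacy is this combinatorial matching of the two ideals, which is a relative version of the Stanley--Reisner condition for the star of $\sigma(\vbar)$.
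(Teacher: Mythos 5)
Your argument is correct and follows essentially the same route as the paper: identify $Z^g$ as $Z$ intersected with the coordinate subspace $V(z_i:\rho_i\in\sigma(\vbar)(1))$ and invoke the relative linear-complement computation (Proposition~\ref{prop.relativecase.xxx}, equivalently your direct use of Proposition~\ref{prop.linearcomp.isogeny} on the subrepresentation). The only difference is that you spell out the combinatorial identification of the generators $\prod_{\rho_i\in S\setminus T}\widetilde{x}_i$ with the Stanley--Reisner-type ideal $I_{\stSig,v}$ for $\Starr(\sigma(\vbar))$, a step the paper leaves implicit in its citation.
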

\begin{proof}
The stack $\cX(\stSig) = [Z/G]$ where $Z = \A^d \smallsetminus V(J_{\Sigma})$
where $J_{\Sigma}:=\vr{\left.\prod_{\rho_i\not\subset\sigma}z_i\right|
  \sigma\in\Sigma}$ is the irrelevant ideal. Now if $g \in G$ corresponds to
a box element $v$ with minimal cone $\sigma(\vbar)$ then 
$Z^g = V(z_{i_1, \ldots , i_k})$ where $\rho_{i_1}, \ldots \rho_{i_k}$ are the rays
of $\sigma(\vbar)$. The proposition now follows from 
Proposition \ref{prop.relativecase.xxx}.
\end{proof}

\section{Toric computations} 


Let $\stSig=(N, \Sigma,\beta)$ be a stacky fan with associated toric
stack $\sX=\cX(\stSig)=[Z/G]$, as constructed in
Section~\ref{ssec:fans,stacks}. 
Let
$g=(\gamma_1,\dots,\gamma_n,s_1,\dots,s_r)\in G$ act on $Z$ by the
diagonal action of the first $n$ components, as defined in
Section~\ref{ssec:fans,stacks}.

	Let $\mathcal{L}_1,\dots,\mathcal{L}_n$ be the line bundles 
associated to the divisors where $z_i=0$
as in Section \ref{sec.charline}
. Writing $\ix(\stSig) = [Z/G]$ as above then
$\T Z = \mathcal{L}_1 + \ldots + \mathcal{L}_n$ and the vector bundle
$\T\ix$ corresponding to the tangent bundle of $\ix$ fits into an 
exact sequence of equivariant vector bundles \cite[Lemma A.1]{EdGr:05}
$$0 \to T\ix \to TZ \to \Lie(G)$$
Since $G$ is diagonalizable $\Lie(G)$ is the trivial bundle so
$T\ix$ can be written in $K$-theory as $\sum_{i=1}^n\LL_i - m\one$
where $\one$ is the class of the trivial one-dimensional representation of $G$
and $m$ is a non-negative integer.
A similar analysis shows that the class corresponding to $I\ix(\stSig)$ can be written on each component of $I\ix(\stSig)$ as a sum of a subset of the $\mathcal{L}_i$ minus  a positive multiple of $\one$. Since logarithmic trace of the trivial bundle $\one$ is 0, we can make computations as if $\T\ix = \T Z = \sum_{i=1}^n \LL_i$.


The following Lemma tells us which of the $\mathcal{L}_i$ appear in the expression for $\T I\ix(\stSig)$ on each component.	
\begin{lemma}\label{lem:equiv}
	Let $\sX$ be the stack $\cX(\stSig)=[Z/G]$ associated to the stacky fan $\stSig=(N, \Sigma, \beta)$. Let $I\sX=[I_GZ/G]$ be the inertia stack, and $\T_{I\sX}$ be the class in $K_G(I_GZ)$ corresponding to the tangent bundle. 
Then for any $i$, $1\leq i\leq$n, and for any box element $v\in\Boxx(\stSig)$ and its corresponding $g=(\gamma_1,\dots,\gamma_n,s_1,\dots,s_r)\in G$, the following statements are equivalent:
		\benu
		\item $q_i\neq0$
		\item $\rho_i\subseteq\sigma(\ol v)$
		\item $\gamma_i\neq1$
		\item $z_i=0$ for all $z=(z_1,...,z_n)\in Z^g$
		\item $\mathcal{L}_i$ has coefficient 0 in $\left.\left(\T_{I\sX}\right)\right|_{Z^g}$
		\eenu
\end{lemma}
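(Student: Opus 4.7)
The plan is to show a small cycle of implications, using the explicit correspondence between box elements and group elements established in Proposition \ref{prop:box-corr}, together with the K-theoretic description of $\T_{I\sX}$ given just before the lemma.

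First I would handle (1)$\Leftrightarrow$(2) directly from the definition of $\Boxx(\stSig)$. By Definition \ref{def:box}, $\ol v=\sum_{\rho_i\in\sigma(\ol v)(1)}q_i\ol{b_i}$ with $q_i\in[0,1)\cap\Q$, so by convention $q_i=0$ for $\rho_i\notin\sigma(\ol v)$. The nontrivial direction uses minimality: if some $q_j=0$ for $\rho_j\in\sigma(\ol v)$, then $\ol v$ would lie in the proper face of $\sigma(\ol v)$ spanned by the remaining rays, contradicting the minimality of $\sigma(\ol v)$. So $q_i\neq 0$ iff $\rho_i\subseteq\sigma(\ol v)$.

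Next (1)$\Leftrightarrow$(3) follows from formula \eqref{eq:gvcorr} in the proof of Proposition \ref{prop:box-corr}: in the inverse correspondence $\gamma_i=e^{2\pi\sqrt{-1}q_i}$, and because $q_i\in[0,1)\cap\Q$, we have $\gamma_i=1$ exactly when $q_i=0$.

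For (3)$\Leftrightarrow$(4) I use the diagonal action
\[
g\cdot(z_1,\dots,z_n)=(\gamma_1z_1,\dots,\gamma_nz_n).
\]
So $z\in Z^g$ iff $(\gamma_i-1)z_i=0$ for every $i$. Thus if $\gamma_i\neq 1$, every $z\in Z^g$ has $z_i=0$. Conversely if $\gamma_i=1$, then $z_i$ is unconstrained on $Z^g$; combined with the fact that $\sigma(\ol v)\in\Sigma$, the coordinate subspace $V(\{z_j:\rho_j\in\sigma(\ol v)\})$ meets $Z$ in a nonempty open set on which $z_i\neq 0$, so $z_i$ is not identically zero on $Z^g$.

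Finally I would address (2)$\Leftrightarrow$(5) using the discussion preceding the lemma. On the component $[Z^g/G]$ of $I\ix$, the fixed locus $Z^g$ is cut out of $Z$ by the regular sequence $\{z_i:\rho_i\in\sigma(\ol v)\}$, so its conormal bundle is $\bigoplus_{\rho_i\in\sigma(\ol v)}\LL_i|_{Z^g}$. Therefore, in $K_G(Z^g)$,
\[
\T_{Z^g}=\sum_{i=1}^n\LL_i|_{Z^g}-\sum_{\rho_i\in\sigma(\ol v)}\LL_i|_{Z^g}=\sum_{\rho_i\notin\sigma(\ol v)}\LL_i|_{Z^g},
\]
and subtracting $\Lie(G)\cong m\cdot\one$ (which contributes no $\LL_i$) gives $\T_{I\sX}|_{Z^g}$. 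Hence the coefficient of $\LL_i$ is $0$ iff $\rho_i\in\sigma(\ol v)$, which is (2).

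The arguments are all essentially bookkeeping once the correspondence (\ref{eq:gvcorr}) and the K-theoretic description of $\T_{I\sX}$ are in hand. The one subtlety to watch is the converse direction in (3)$\Rightarrow$(4): showing that $\gamma_i=1$ forces the existence of some $z\in Z^g$ with $z_i\neq 0$, which reduces to verifying that the relevant coordinate subspace is not entirely contained in $V(J_\Sigma)$; this follows because $\sigma(\ol v)$ is an honest cone of $\Sigma$.
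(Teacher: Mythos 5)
Your proposal is correct. The genuinely new content of this lemma relative to \cite{BCS:05} is the fifth condition, and there you and the paper argue identically: the component $Z^g$ is the intersection of $Z$ with the coordinate subspace $V(\{z_j : \rho_j\in\sigma(\ol v)\})$, its normal bundle in $Z$ is $\bigoplus_{\rho_i\in\sigma(\ol v)}\mathcal{L}_i|_{Z^g}$, and subtracting this from $\T_Z|_{Z^g}=\sum_i\mathcal{L}_i$ in $K$-theory (the $\Lie(G)$ term being trivial) shows the coefficient of $\mathcal{L}_i$ vanishes exactly when $\rho_i\subseteq\sigma(\ol v)$. The only divergence is that the paper disposes of conditions (1)--(4) in one line by citing Lemma 4.6 of \cite{BCS:05} and then proves (4)$\Leftrightarrow$(5), whereas you reprove (1)--(4) from scratch (minimality of $\sigma(\ol v)$ for (1)$\Leftrightarrow$(2), the principal-branch convention in \eqref{eq:gvcorr} for (1)$\Leftrightarrow$(3), and the diagonal action for (3)$\Leftrightarrow$(4)) and then link (5) to (2) instead of (4). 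Your version is more self-contained and correctly isolates the one nontrivial point in the converse of (3)$\Rightarrow$(4), namely that the relevant coordinate subspace is not contained in $V(J_\Sigma)$ because $\sigma(\ol v)$ is an actual cone of $\Sigma$; the paper's version is shorter at the cost of importing the hypothesis of completeness from the cited lemma. One cosmetic slip: the bundle $\bigoplus_{\rho_i\in\sigma(\ol v)}\mathcal{L}_i|_{Z^g}$ you exhibit is the \emph{normal} bundle, not the conormal bundle (which would be its dual); your subsequent $K$-theory computation subtracts the correct object, so nothing is affected.
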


\begin{proof}
  Lemma 4.6 of \cite{BCS:05} shows that the first four statements are
  equivalent when $\Sigma$ is a complete simplicial fan, so it will
  suffice for us to show that the fifth is equivalent to the
  fourth. The normal bundle to the map $I_GZ \to Z$ is $\sum{\mathcal{L}_i}$, where
  the sum is over all $i$ such that $z_i=0$ for all $z\in Z^g$. (Note that
the map $I_GX \to X$ is a closed embedding on each component of $I_GZ$.)
Thus,
  the restricion of the tangent bundle is
  $\left.\left(\T_{I_GZ}\right)\right|_{Z^g}=\sum{\mathcal{L}_i}$,
  where the sum is over all $i$ such $z_i\neq0$ for at least one $z\in
  Z^g$. That is, the coefficient on $\mathcal{L}_i$ is zero if and
  only if $z_i=0$ for all $z\in Z^g$.
\end{proof}

\begin{example}
  Consider the stack $\sX=\PP(6,4)$ of Example~\ref{ex:p64corr}. We
  have $\ol{v_5}=-1=0\ol{b_1}+\frac13\ol{b_2}$, so $q_{5,1}=0$ and
  $q_{5,2}\neq0$ (where the notation $q_{j,i}$ describes the rational
  coefficient for $\ol{v_j}$ on $b_i$). That is to say,
  $\sigma(\ol{v_5})$ contains $\rho_2$ and not $\rho_1$. The
  corresponding group element $g_5=(1,e^{2\pi\sqrt{-1}/3},-1)\in G$
  indeed has $\gamma_1=1$ and $\gamma_2\neq1$.  Thus
  $Z^{g_5}=\{(z_1,0)\st z_1\neq0\}$. Finally, the restriction of the tangent bundle here
  is just $\mathcal{L}_1$. In short, the five conditions of
  Lemma~\ref{lem:equiv} are true for $i=2$ and false for $i=1$.
	
  On the other hand, if we instead take the box element $v_2=(1,1)$
  corresponding to $g_2=(-1,1,e^{3\pi\sqrt{-1}/2})$, the five
  conditions of Lemma~\ref{lem:equiv} are true for $i=1$ and false for
  $i=2$; accordingly, the restriction of the tangent bundle is $\mathcal{L}_2$.
	
	Lastly, if we take the box element $v_3=(0,1)$ corresponding to $g_3=(1,1,-1)$, we have all five conditions false for both $i=1$ and $i=2$, and the 
restriction of the tangent bundle is $\mathcal{L}_1+\mathcal{L}_2$. This last example illustrates an important type of box element, one in which $q_{j,i}=0$ for all $i$. These correspond to elements $g \in G$ which fix all of $Z$.
      \end{example}

\begin{corollary}\label{cor:ev-pullback}
	If $e_1:\T I^2\ix \to \T I\ix$ and $e_2:\T I^2\ix\to\T\ix$ are the evaluation maps and $(v_1,v_2)\in\Boxx(\stSig)\times\Boxx(\stSig)$, then the coefficient of $\mathcal{L}_i$  in $\left.\left(e_1^*\T_{I_GZ}\right)\right|_{Z^{g_1,g_2}}$ (resp. $\left.\left(e_2^*\T_{I_GZ}\right)\right|_{x^{g_1,g_2}}$) is 1 if and only if $q_{1,i}=0$ (resp. $q_{2,i}=0$).
\end{corollary}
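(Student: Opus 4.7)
The plan is to reduce the statement to a direct application of Lemma \ref{lem:equiv} via the geometry of the evaluation maps. Unlike the lemma itself, the corollary requires no new combinatorial input; it is essentially a restriction compatibility check.

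First I would unwind the definitions of the evaluation morphisms. The map $e_1 \colon \I^2_G Z \to I_G Z$ sends $(g_1, g_2, z) \mapsto (g_1, z)$, so on the component indexed by $(v_1, v_2) \in \Boxx^2(\stSig)$ it factors as the closed immersion $Z^{g_1, g_2} \hookrightarrow Z^{g_1}$ composed with the inclusion of the $v_1$-component of $I_G Z$. Hence $e_1^* \T_{I_G Z}\bigl|_{Z^{g_1, g_2}}$ is literally the further restriction of $\T_{I_G Z}\bigl|_{Z^{g_1}}$ to $Z^{g_1, g_2}$. The same holds for $e_2$ with the roles of $v_1$ and $v_2$ interchanged.

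Next I would observe that the $G$-equivariant line bundles $\mathcal{L}_i$ on $Z$ pull back to $G$-equivariant line bundles on any $G$-invariant closed subscheme, and these pullbacks are precisely the $\mathcal{L}_i$ on the subscheme (with the same name by abuse of notation, as set up in Section \ref{sec.charline}). Therefore the $K$-theoretic decomposition of the tangent bundle into a sum of the $\mathcal{L}_i$'s is preserved under restriction: the coefficient of $\mathcal{L}_i$ in $\T_{I_G Z}\bigl|_{Z^{g_1}}$ equals the coefficient of $\mathcal{L}_i$ in its further restriction to $Z^{g_1, g_2}$.

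Combining these two observations, the coefficient of $\mathcal{L}_i$ in $e_1^* \T_{I_G Z}\bigl|_{Z^{g_1, g_2}}$ equals the coefficient of $\mathcal{L}_i$ in $\T_{I_G Z}\bigl|_{Z^{g_1}}$. By Lemma \ref{lem:equiv} applied to $v_1$ (condition (5) versus condition (1)), this coefficient is $1$ precisely when $q_{1,i} = 0$. The argument for $e_2$ is identical, giving the condition $q_{2,i} = 0$. The only possible obstacle is ensuring that the restriction genuinely preserves the $\mathcal{L}_i$-decomposition when $Z^{g_1, g_2}$ is a proper subscheme of $Z^{g_1}$, but this is immediate from the fact that $\mathcal{L}_i$ is defined on all of $Z$ and each line bundle $\mathcal{L}_i$ in $\T_{I_G Z}\bigl|_{Z^{g_1}}$ corresponds to a coordinate $z_i$ on $Z$ that does not vanish identically on $Z^{g_1}$; the same is then true or false on $Z^{g_1, g_2}$ according to whether $z_i$ vanishes identically there, which is exactly the condition governed by Lemma \ref{lem:equiv} applied to $v_2$ if we were analyzing $e_2$.
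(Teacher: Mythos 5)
Your proposal is correct and matches the paper's (implicit) reasoning: the paper states this as an immediate consequence of Lemma~\ref{lem:equiv}, and your reduction — that $e_1$ on the $(v_1,v_2)$-component is just the closed immersion $Z^{g_1,g_2}\hookrightarrow Z^{g_1}$, so the pullback restricts the $K$-theory class $\sum_{q_{1,i}=0}\mathcal{L}_i$ without altering its coefficients — is exactly the intended argument. (Your closing sentence about whether $z_i$ vanishes on $Z^{g_1,g_2}$ is irrelevant to $e_1^*$ and belongs rather to Corollary~\ref{cor:TI2GX}, but it does not affect the validity of the proof.)
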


As a corollary we can also descrbe which $\mathcal{L}_i$ appear in the expression for $\T I^2\ix$
\begin{corollary}\label{cor:TI2GX}
	The coefficient of $\mathcal{L}_i$ in $\T_{I^2_GZ}$ associated to $(v_1,v_2)\in\Boxx(\stSig)\times\Boxx(\stSig)$ is 1 if and only if $q_{1,i}=q_{2,i}=0$.
\end{corollary}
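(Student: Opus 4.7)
The plan is to treat this as a direct extension of Lemma~\ref{lem:equiv} from single to double inertia. The key observation is that on the component of $I^2_GZ$ indexed by $(v_1,v_2)$ with corresponding pair $(g_1,g_2)\in G^2$, the underlying fixed locus is $Z^{g_1,g_2}=Z^{g_1}\cap Z^{g_2}$, and the map $I^2_GZ\to Z$ is a closed embedding on each component. Hence $\T_{I^2_GZ}$ restricted to this component is the tangent bundle of $Z^{g_1,g_2}$ viewed inside $Z$.

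First I would write $\T Z=\sum_{i=1}^n \mathcal{L}_i$ as in the setup, and then, exactly as in the proof of Lemma~\ref{lem:equiv}, argue that the restriction of $\T Z$ to $Z^{g_1,g_2}$ decomposes as a direct sum over those indices $i$ for which the coordinate $z_i$ is \emph{not} identically zero on $Z^{g_1,g_2}$. Thus the coefficient of $\mathcal{L}_i$ in $\T_{I^2_GZ}$ on this component is $1$ precisely when $z_i \not\equiv 0$ on $Z^{g_1,g_2}$.

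Next I would translate the condition $z_i\not\equiv 0$ on $Z^{g_1,g_2}$ into a condition on the $\gamma_{j,i}$. Since $g_j=(\gamma_{j,1},\dots,\gamma_{j,n},s_{j,1},\dots,s_{j,r})$ acts diagonally by scaling $z_i$ by $\gamma_{j,i}$, the locus $Z^{g_j}$ is cut out by $\{z_i=0 : \gamma_{j,i}\neq 1\}$, and hence $Z^{g_1,g_2}$ is cut out by $\{z_i=0 : \gamma_{1,i}\neq 1 \text{ or } \gamma_{2,i}\neq 1\}$. Consequently $z_i\not\equiv 0$ on $Z^{g_1,g_2}$ if and only if $\gamma_{1,i}=\gamma_{2,i}=1$. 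Applying the equivalence (1)$\Leftrightarrow$(3) from Lemma~\ref{lem:equiv} to each of $g_1$ and $g_2$ individually converts this into $q_{1,i}=q_{2,i}=0$, which is the desired statement.

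There is no real obstacle here; the content is already contained in Lemma~\ref{lem:equiv} and Corollary~\ref{cor:ev-pullback}, and the only task is to intersect the fixed loci and observe that the conditions for $\mathcal{L}_i$ to appear combine by an ``and.'' The one subtlety worth spelling out cleanly in the proof is that the $K$-theoretic expression $\T\ix = \sum \mathcal{L}_i - m\one$ involves the trivial summand coming from $\Lie(G)$, but this does not affect the coefficient of any $\mathcal{L}_i$ on any inertia component, so one may safely argue as if $\T\ix = \sum \mathcal{L}_i$, exactly as was done in Lemma~\ref{lem:equiv}.
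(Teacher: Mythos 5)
Your proof is correct and follows essentially the same route as the paper: both identify $Z^{g_1,g_2}$ as the locus cut out by $\{z_i=0 : \gamma_{1,i}\neq 1 \text{ or } \gamma_{2,i}\neq 1\}$, read off the tangent bundle as the sum of $\mathcal{L}_i$ over indices with $\gamma_{1,i}=\gamma_{2,i}=1$, and then invoke the equivalence $\gamma_{j,i}=1 \Leftrightarrow q_{j,i}=0$ from Lemma~\ref{lem:equiv}. Your added remark about the trivial $\Lie(G)$ summand not affecting the coefficients is a sensible clarification but not a departure from the paper's argument.
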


\begin{proof}
	Adapting Lemma~\ref{lem:equiv}, we have that a point $(z,g_1,g_2)\in\mathbb{I}_G^2X$ is in the fixed locus of both $g_1=(\gamma_{1,1},\dots,\gamma_{1,n},s_{1,1},\dots,s_{1,r})$ and $g_2=(\gamma_{2,1},\dots,\gamma_{2,n},s_{2,1},\dots,s_{2,r})$ if and only if
	\beq
	z\in\{(z_1,...,z_n)\in Z\st z_i=0 \text{ if } \gamma_{1,i}\neq1\text{ or }\gamma_{2,i}\neq1\}
	\eeq
	\beq
	=\{(z_1,...,z_n)\in Z\st z_i=0\text{ if }q_{1,i}\neq0\text{ or }q_{2,i}\neq0\}
	\eeq
	The first presentation of this set indicates that the tangent bundle to this set at $(z,g_1,g_2)$ is 
	\beq
	\T_{I_G^2Z}=\dsp\sum_{\gamma_{1,i}=\gamma_{2,i}=1}\mathcal{L}_i\ .
	\eeq
	The second presentation indicates that the indexing set on this sum is equivalent to the set of $i$ where $q_{1,i}=q_{2,i}=0$.
\end{proof}

\begin{example}
	Take our usual example, $\ix=\PP(6,4)$, in the notation as in Example~\ref{ex:p64corr}. Then for the pair $(v_4,v_3)\in\Boxx(\stSig)\times\Boxx(\stSig)$, we have $e_1^*\T_{I_GZ}=\mathcal{L}_2$ and $e_2^*\T_{I_GZ}=\mathcal{L}_1+\mathcal{L}_2$ by Corollary~\ref{cor:ev-pullback}, and $\T_{I_G^2Z}=\mathcal{L}_2$ by Corollary~\ref{cor:TI2GX}.
\end{example}

\begin{lemma}\label{lem:box-trace}
	Let $v$ be the box element corresponding to some $g\in G$, where $\ol{v}=\sum_{i=1}^nq_i\ol{b_i}\in\Boxx(\stSig)$ for some $0\leq q_i<1$, $i=1, 2,\dots, n$. Then for all $i\in\{1,\dots,n\}$,
	\beq
	L(g)(\mathcal{L}_i)=q_i\mathcal{L}_i,
	\eeq
	where $L(g)$ is the logarithmic trace of Definition~\ref{def:logtrace}.
\end{lemma}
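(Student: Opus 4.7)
The plan is to unwind the definitions: combine the explicit correspondence $g \leftrightarrow v$ from the proof of Proposition~\ref{prop:box-corr} with the fact that each $\mathcal{L}_i$ is just the restriction to $G$ of the $i$-th coordinate character of $\G_m^n$, and then read off the eigenvalue of $g$ on the line bundle $\mathcal{L}_i$.

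First I would recall from Section~\ref{sec.charline} that $\mathcal{L}_i = \mathcal{O}(z_i)$ is the pullback along $G \to \G_m^n$ of the character $e_i$. Consequently, if $g = (\gamma_1, \ldots, \gamma_n, s_1, \ldots, s_r) \in G$, then $g$ acts on the fiber of $\mathcal{L}_i$ by multiplication by $\gamma_i$. In particular, $\mathcal{L}_i$ is a one-dimensional eigenbundle for the action of $g$, with eigenvalue $\gamma_i$.

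Next I would invoke the correspondence (\ref{eq:gvcorr}) from the proof of Proposition~\ref{prop:box-corr}: if the box element $v$ with $\ol{v} = \sum_i q_i \ol{b_i}$ corresponds to $g$, then $q_i = \Log(\gamma_i)/(2\pi\sqrt{-1})$, equivalently $\gamma_i = e^{2\pi\sqrt{-1}\,q_i}$, with $q_i \in [0,1) \cap \Q$. Thus the eigenvalue of $g$ on $\mathcal{L}_i$ is precisely $\exp(2\pi\sqrt{-1}\,q_i)$, and the corresponding $\lambda$ from Definition~\ref{def:logtrace} is $q_i$ itself (since it already lies in $[0,1)$).

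Applying Definition~\ref{def:logtrace} to $V = \mathcal{L}_i$, which has the single eigenbundle $\mathcal{L}_i$ with $\lambda = q_i$, immediately yields
\[
L(g)(\mathcal{L}_i) = q_i\,\mathcal{L}_i,
\]
as required. There is really no main obstacle here; the only subtlety is bookkeeping with the explicit formula (\ref{eq:gvcorr}) to make sure the $i$-th rational coefficient of $\ol{v}$ in the expansion $\sum q_i \ol{b_i}$ matches the $i$-th coordinate branch of the logarithm of $\gamma_i$, which is exactly how the correspondence was set up.
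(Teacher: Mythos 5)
Your proof is correct and follows essentially the same route as the paper: identify the eigenvalue of $g$ on $\mathcal{L}_i$ as $\gamma_i$, use the correspondence from Proposition~\ref{prop:box-corr} to write $\gamma_i=e^{2\pi\sqrt{-1}q_i}$ with $q_i\in[0,1)$, and apply Definition~\ref{def:logtrace}. The only difference is that you spell out the character-theoretic identification of $\mathcal{L}_i$ slightly more explicitly than the paper does.
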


\begin{proof}
	For the usual action of $g=(\gamma_1,\dots,\gamma_n,s_1,\dots,s_r)$
	\beq
	g\cdot(z_1,\dots,z_n)=(\gamma_1z_1,\dots,\gamma_nz_n),
	\eeq
	the eigenvalue on the bundle $\mathcal{L}_i$ is $\gamma_i$. By Proposition~\ref{prop:box-corr}, we have $\gamma_i=e^{2\pi\sqrt{-1}q_i}$. Thus $q_i\mathcal{L}_i$ is the logarithmic trace.
\end{proof}

\begin{example}
	Once again recycling Example~\ref{ex:p64corr}, we have $L(g_2)(\mathcal{L}_1)=\frac12\mathcal{L}_1$ and $L(g_2)(\mathcal{L}_2)=0$. Similarly, $L(g_7)(\mathcal{L}_1)=0$ and $L(g_7)(\mathcal{L}_2)=\frac23\mathcal{L}_2$. We also have $L(g_3)(\mathcal{L}_1)=L(g_3)(\mathcal{L}_2)=0$, which nicely illustrates the fact that $L(g)$ depends on $\ol{v}$ but not on $v$. That is to say, logarithmic trace ignores torsion.
\end{example}

We now compute the classes ${\mathcal{L}_i}^+$ and ${\mathcal{L}_i}^-$ in $K_G(\I_G^2Z)$ of Proposition~\ref{def:E-plus-minus}.

\begin{lemma}\label{lem:computeLi+}
	Let $\mathcal{L}_i$ be the $G$-equivariant line bundle associated to the $i$-th component of the action of $G$ on $Z$. For $j=1,2$, let $g_j=(\gamma_{j,1},\dots,\gamma_{j,n},s_{j,1},\dots,s_{j,r})$ be an element of $G$, and let $v_j$ be its corresponding element in $\Boxx(\stSig)$, where $q_{j,i}$ is defined by $\ol{v_j}=\sum_{i=1}^nq_{j,i}\ol{b_{j,i}}$ (for $1\leq i\leq n$ and $j=1, 2$).
	
	Then for each $i$, $1\leq i\leq n$, exactly one of the three cases holds: 
	\benu
	\item 
		\benu
		\item At least one of $q_{1,i},q_{2,i}$ is zero, and
		\item ${\mathcal{L}_i}^+(g_1,g_2)=0$, and
		\item ${\mathcal{L}_i}^-(g_1,g_2)=0$.
		\eenu
	\item 
		\benu
		\item $q_{1,i}+q_{2,i}<1$ with $q_{1,i},q_{2,i}$ both nonzero, and
		\item ${\mathcal{L}_i}^+(g_1,g_2)=0$, and
		\item ${\mathcal{L}_i}^-(g_1,g_2)=\mathcal{L}_i$.
		\eenu
	\item 
		\benu
		\item $q_{1,i}+q_{2,i}\geq1$, and 
		\item ${\mathcal{L}_i}^+(g_1,g_2)=\mathcal{L}_i$, and
		\item ${\mathcal{L}_i}^-(g_1,g_2)=0$.
		\eenu
	\eenu
\end{lemma}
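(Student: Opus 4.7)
The strategy is to unfold the definitions of $\mathcal{L}_i^\pm(g_1,g_2)$ from Proposition~\ref{def:E-plus-minus} and directly evaluate them using Lemma~\ref{lem:box-trace}, reducing the lemma to a case analysis on $q_{1,i}+q_{2,i}\in[0,2)$.

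First I would observe that every logarithmic trace in Proposition~\ref{def:E-plus-minus} is well-defined on $\mathcal{L}_i|_{Z^{g_1,g_2}}$: indeed $g_1$, $g_2$, $g_1g_2$, and their inverses all act trivially on $Z^{g_1,g_2}$ since for any $z\in Z^{g_1,g_2}$ one has $g_1z=g_2z=z$, and hence $(g_1g_2)z=z$. The eigenvalue of $g_1g_2$ on $\mathcal{L}_i$ is the character $e_i$ evaluated at $g_1g_2$, namely $\gamma_{1,i}\gamma_{2,i}=e^{2\pi\sqrt{-1}(q_{1,i}+q_{2,i})}$ by Proposition~\ref{prop:box-corr}. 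To apply Definition~\ref{def:logtrace} we must write this eigenvalue as $e^{2\pi\sqrt{-1}\lambda}$ with $\lambda\in[0,1)$, so $\lambda=q_{1,i}+q_{2,i}$ if the sum lies in $[0,1)$ and $\lambda=q_{1,i}+q_{2,i}-1$ otherwise. The analogous normalization for $(g_1g_2)^{-1}$ uses $1-\lambda$ when $\lambda\neq 0$. Combined with Lemma~\ref{lem:box-trace}, this gives explicit formulas for each term appearing in $\mathcal{L}_i^\pm(g_1,g_2)$.

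The remainder is bookkeeping, split into three cases. In Case 1 (one of $q_{j,i}$ is zero, say $q_{1,i}=0$), the element $g_1$ acts trivially on $\mathcal{L}_i$, so $L(g_1^{\pm 1})(\mathcal{L}_i)=0$ and $L((g_1g_2)^{\pm 1})(\mathcal{L}_i)=L(g_2^{\pm 1})(\mathcal{L}_i)$; the three-term alternating sum collapses to $0$ for both signs. In Case 2 ($q_{1,i},q_{2,i}>0$ and $q_{1,i}+q_{2,i}<1$), substituting $\lambda=q_{1,i}+q_{2,i}$ for $g_1g_2$ and $1-\lambda$ for its inverse gives $\mathcal{L}_i^+=(q_{1,i}+q_{2,i})-(q_{1,i}+q_{2,i})=0$ and $\mathcal{L}_i^-=(1-q_{1,i})+(1-q_{2,i})-(1-q_{1,i}-q_{2,i})=\mathcal{L}_i$. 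In Case 3 ($q_{1,i}+q_{2,i}\geq 1$), one uses $\lambda=q_{1,i}+q_{2,i}-1$ instead, and the arithmetic reverses to yield $\mathcal{L}_i^+=\mathcal{L}_i$ and $\mathcal{L}_i^-=0$. There is no conceptual obstacle; the only point requiring care is the discontinuous normalization at the boundary where $q_{1,i}+q_{2,i}$ crosses $1$, and the non-negativity guaranteed by Proposition~\ref{def:E-plus-minus} serves as a useful consistency check on the sign of each alternating sum.
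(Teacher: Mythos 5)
Your proposal is correct and follows essentially the same route as the paper's proof: unfold the three-term formulas of Proposition~\ref{def:E-plus-minus}, evaluate each logarithmic trace via Lemma~\ref{lem:box-trace} (with the cancellation $L(g_1g_2)=L(g_2)$, $L(g_2^{-1}g_1^{-1})=L(g_2^{-1})$ handling Case~1), and finish Cases~2 and~3 by normalizing $q_{1,i}+q_{2,i}$ into $[0,1)$. The one point you flag but do not resolve --- the boundary $q_{1,i}+q_{2,i}=1$, where $L\bigl((g_1g_2)^{-1}\bigr)(\mathcal{L}_i)=0$ rather than $(2-q_{1,i}-q_{2,i})\mathcal{L}_i$, so your Case~3 arithmetic for $\mathcal{L}_i^-$ would give $\mathcal{L}_i$ instead of $0$ --- is treated in exactly the same way in the paper's own proof, so it is not a gap relative to the paper.
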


\begin{proof}
	Since $0\leq q_{i,j}<1$ for any $i$ and $j$, the cases 1(a), 2(a) and 3(a) are disjoint and encompass all possibilities. So we must only show that in each of the three cases, (a) implies both (b) and (c).
	
	For case 1, without loss of generality we may assume $q_{1,i}=0$. Then by Lemma~\ref{lem:box-trace}, $\mathcal{L}_i(g_1)=\mathcal{L}_i(g_1^{-1})=0$, and thus we have 
	\beq
	{\mathcal{L}_i}^+(g_1,g_2)=L(g_2)(\mathcal{L}_i|_{Z^{g_1,g_2}})-L(g_1g_2)(\mathcal{L}_i|_{Z^{g_1,g_2}})
	\eeq
	and
	\beq
	{\mathcal{L}_i}^-(g_1,g_2)=L(g_2^{-1})(\mathcal{L}_i|_{Z^{g_1,g_2}})-L(g_2^{-1}g_1^{-1})(\mathcal{L}_i|_{Z^{g_1,g_2}}).
	\eeq
	Again by Lemma~\ref{lem:box-trace}, $L(g_1g_2)(\mathcal{L}_i)=q_{2,i}\mathcal{L}_i=L(g_2)(\mathcal{L}_i)$ and $L(g_2^{-1}g_1^{-1})=(1-q_{2,i})\mathcal{L}_i=L(g_2^{-1})(\mathcal{L}_i)$, which completes the proof of case 1.

	For cases 2 and 3, we have
	\beq
	L(g_1g_2)(\mathcal{L}_i)=	
	\left\{	
		\begin{array}{lll}
		(q_{1,i}+q_{2,i})\mathcal{L}_i & \text{if }&0< q_{1,i}+q_{2,i}<1\\
		(q_{1,i}+q_{2,i}-1)\mathcal{L}_i& \text{if }&q_{1,i}+q_{2,i}\geq1\\
		\end{array}	
	\right.
	\eeq
	and 
	\beq
	L(g_2^{-1}g_1^{-1})(\mathcal{L}_i)=	
		\left\{	
			\begin{array}{lll}
			(1-q_{1,i}-q_{2,i})\mathcal{L}_i & \text{if }&0<q_{1,i}+q_{2,i}\leq1\\
			(2-q_{1,i}-q_{2,i})\mathcal{L}_i & \text{if }&q_{1,i}+q_{2,i}>1
			\end{array}	
		\right..
	\eeq
	Thus,
	\beq
	\begin{array}{rcl}
		{\mathcal{L}_i}^+(g_1,g_2)
		&=&	
		L(g_1)(\mathcal{L}_i|_{Z^{g_1,g_2}})+L(g_2)(\mathcal{L}_i|_{Z^{g_1,g_2}})-L(g_1g_2)(\mathcal{L}_i|_{Z^{g_1,g_2}})\\[2mm]
		&=&
		\left\{	
			\begin{array}{lll}
			0 & \text{if }&0< q_{1,i}+q_{2,i}<1\\
			\mathcal{L}_i& \text{if }&q_{1,i}+q_{2,i}\geq1\\
			\end{array}	
		\right.\end{array}
	\eeq
	and 
	\beq
	\begin{array}{rcl}
		{\mathcal{L}_i}^-(g_1,g_2)
		&=&
		L(g_1^{-1})(\mathcal{L}_i|_{Z^{g_1,g_2}})+L(g_2^{-1})(\mathcal{L}_i|_{Z^{g_1,g_2}})-L(g_2^{-1}g_1^{-1})(\mathcal{L}_i|_{Z^{g_1,g_2}})\\[2mm]
		&=&
		\left\{	
			\begin{array}{lll}
			\mathcal{L}_i & \text{if }&0< q_{1,i}+q_{2,i}<1\\
			0& \text{if }&q_{1,i}+q_{2,i}\geq1\\
			\end{array}	
		\right..\end{array}
	\eeq
\end{proof}

\begin{definition}\label{def:B+-}
	For any pair $v_1,v_2\in\Boxx(\stSig)$, define the indexing sets $B^+_\stSig(v_1,v_2)$ and $B^-_\stSig(v_1,v_2)$ to be the following subsets of $\{1,2,\dots,n\}$:
	\beq
	B_\stSig^+(v_1,v_2):=\{i\st q_{1,i}+q_{2,i}\geq1\}
	\eeq
	\beq
	B_\stSig^-(v_1,v_2):=\{i\st q_{1,i}+q_{2,i}<1\text{ and } q_{1,i},q_{2,i}\neq0\}.
	\eeq
\end{definition}

\begin{proposition}\label{prop:computeV+}
	Let $V$ be the $G$-equivariant $Z$-bundle $V=\sum_{i=1}^n a_i\mathcal{L}_i$, where $a_i$ is a non-negative integer for each $i$. Then for any pair $v_1,v_2\in\Boxx(\stSig)$,
	\beq
	V^+(g_1,g_2)
	=\dsp\sum_{i=1}^n a_i {\mathcal{L}_i}^+(g_1,g_2)
	=\dsp\sum_{i \in B^+_{\stSig}(v_1,v_2)}a_i\mathcal{L}_i
	\eeq
	and
	\beq
	V^-(g_1,g_2)
	=\dsp\sum_{i=1}^n a_i {\mathcal{L}_i}^-(g_1,g_2)
	=\dsp\sum_{i \in B^-_\stSig(v_1,v_2)}a_i\mathcal{L}_i.
	\eeq
\end{proposition}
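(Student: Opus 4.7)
The plan is to reduce the proposition to Lemma~\ref{lem:computeLi+} by exploiting additivity of the logarithmic trace construction. First I would observe that the logarithmic trace of Definition~\ref{def:logtrace} is additive on direct sums: for any $g$, the eigenbundle decomposition of $V' \oplus V''$ under the $g$-action is the direct sum of the eigenbundle decompositions of $V'$ and $V''$ grouped by common eigenvalue, so $L(g)(V' \oplus V'') = L(g)(V') + L(g)(V'')$, and this extends $\Z$-linearly to $K_G$. Consequently the virtual bundles
\begin{equation*}
V^{\pm}(g_1,g_2) = L(g_1^{\pm 1})(V|_{Z^{g_1,g_2}}) + L(g_2^{\pm 1})(V|_{Z^{g_1,g_2}}) - L((g_1g_2)^{\pm 1})(V|_{Z^{g_1,g_2}})
\end{equation*}
of Proposition~\ref{def:E-plus-minus} are themselves additive in $V$.

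Applying this additivity to the decomposition $V = \sum_{i=1}^n a_i \mathcal{L}_i$ (with each $a_i \ge 0$ interpreted as the number of direct summands) immediately yields
\begin{equation*}
V^{+}(g_1,g_2) = \sum_{i=1}^n a_i \mathcal{L}_i^{+}(g_1,g_2), \qquad V^{-}(g_1,g_2) = \sum_{i=1}^n a_i \mathcal{L}_i^{-}(g_1,g_2),
\end{equation*}
which is the first equality in each displayed formula of the proposition.

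For the second equality I would invoke Lemma~\ref{lem:computeLi+}, which partitions $\{1,\dots,n\}$ into three disjoint cases depending on the pair $(q_{1,i},q_{2,i})$. Comparing these cases with Definition~\ref{def:B+-}, one sees that $\mathcal{L}_i^{+}(g_1,g_2) = \mathcal{L}_i$ precisely in case~3 (equivalently $i \in B_\stSig^{+}(v_1,v_2)$) and vanishes in cases~1 and~2, while $\mathcal{L}_i^{-}(g_1,g_2) = \mathcal{L}_i$ precisely in case~2 (equivalently $i \in B_\stSig^{-}(v_1,v_2)$) and vanishes in cases~1 and~3. Substituting these values term-by-term into the sums above collapses them to the claimed sums indexed by $B_\stSig^{+}(v_1,v_2)$ and $B_\stSig^{-}(v_1,v_2)$ respectively.

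There is no real obstacle here: once additivity of $V \mapsto V^{\pm}(g_1,g_2)$ is noted, the proposition is a direct repackaging of Lemma~\ref{lem:computeLi+} together with the definition of $B_\stSig^{\pm}$. The only bookkeeping concern is ensuring that the three cases of Lemma~\ref{lem:computeLi+} really are exhaustive and disjoint over the range $0 \le q_{j,i} < 1$, which was already verified in the proof of that lemma.
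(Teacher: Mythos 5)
Your proposal is correct and follows essentially the same route as the paper's proof: additivity of the logarithmic trace reduces $V^{\pm}(g_1,g_2)$ to $\sum_i a_i\mathcal{L}_i^{\pm}(g_1,g_2)$, and then Lemma~\ref{lem:computeLi+} together with Definition~\ref{def:B+-} identifies the surviving terms. Your write-up is slightly more explicit about why the trace is additive and about matching the three cases to the indexing sets, but it is the same argument.
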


\begin{proof}
	Since the logarithmic trace of a vector bundle is additive, if $V=\sum_{i=1}^na_i\mathcal{L}_i$ then for any $g\in G$,  
	\beq
	L(g)(V)=L(g)\left(\sum_{i=1}^na_i\mathcal{L}_i\right)=\sum_{i=1}^nL(g)(a_i\mathcal{L}_i)=\sum_{i=1}^na_iL(g)(\mathcal{L}_i).
	\eeq
	The statement then follows directly from Lemma~\ref{lem:computeLi+}.
\end{proof}

\begin{figure}\label{fig:p654box}
 	$$\includegraphics[width=3in]{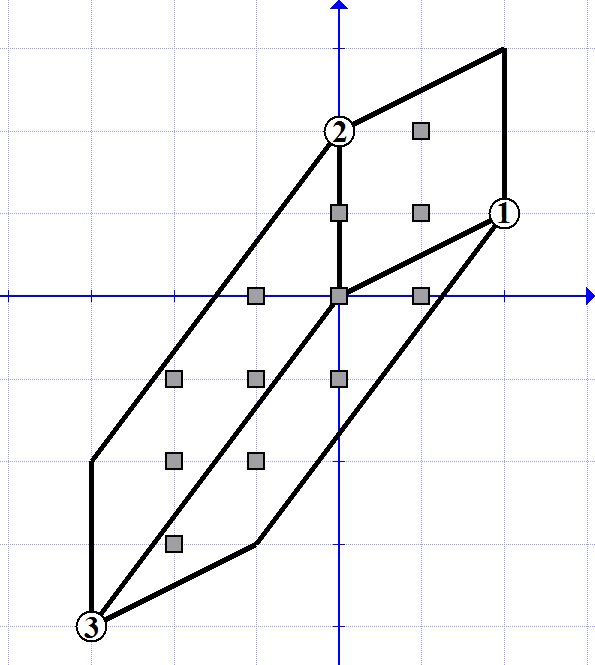}$$
 	\caption{The box diagram for the stacky fan in Example~\ref{ex:p654} which produces $\PP(6,5,4)$. }
 \end{figure}

\begin{example}\label{ex:p654}
	The definitions of $B^+_\stSig$ and $B^-_\stSig$ may not seem particularly intuitive until we look at an example which includes a box diagram. 
	
	Consider the stacky fan $\stSig$ with $N=\Z^2$, with the rays of $\Sigma$ generated by $b_1=(2,1)$, $b_2=(0,2)$ and $b_3=(-3,-4)$. Figure~\ref{fig:p654box} displays $\Boxx(\stSig)$, in a way such that the elements in $\Boxx(\stSig)$ (denoted by gray squares) are ``boxed off'' by the parallelograms formed by the $b_i$. Any element of $N=\Z^2$ outside the parallelograms is equivalent (modulo some $N(\stsig)$) to a box element lying inside one of the three parallelograms. 
	
	Consider the cone $\sigma_1$ formed by $\rho_1$ and $\rho_2$. We'll name the three non-zero box elements $v_1=(0,1)$, $v_2=(1,1)$ and $v_3=(1,2)$. Then $\ol{v_1}=\frac12\ol{b_2}$, $\ol{v_2}=\frac12\ol{b_1}+\frac14\ol{b_2}$ and $\ol{v_3}=\frac12\ol{b_1}+\frac34\ol{b_2}$.
	
	The intuitive way to think about  $B^+_\stSig$ is that it lists the "directions" in which the sum of the box elements "leaves" these parallelograms. For instance, with the pair $v_1$ and $v_2$, $B^+_\stSig(v_1,v_2)=\emptyset$, because $(0,1)+(1,1)=(1,2)$ which is still inside the parallelogram. On the other hand, $B^+_\stSig(v_2,v_3)=\{1,2\}$, since $(1,1)+(1,2)=(2,3)$, which leaves the parallelogram in the direction of both $b_1$ and $b_2$. Similarly, $B^+_\stSig(v_2,v_2)=\{1\}$ because the sum $v_2+v_2$ leaves the box only in the direction of $b_1$.
	
	The other side of the coin is $B^-_\stSig$, which lists the ways in which the sum does \emph{not} leave the parallelogram. For example, we have $B^-_\stSig(v_2,v_3)=\emptyset$ and $B^-_\stSig(v_2,v_2)=\{2\}$. An oddity occurs when one of the box elements happens to be on a ray, such as $B^-_\stSig(v_1,v_2)=\{2\}$; the index 1 is not included because $v_1$ is on the ray $b_2$ (and therefore $q_{1,1}=0$).
	
	Let $V=a_1\mathcal{L}_1+a_2\mathcal{L}_2+a_3\mathcal{L}_3$ for some non-negative integers $a_1, a_2, a_3$. Then we can compute the following, using Proposition~\ref{prop:computeV+} in conjunction with the above calculations:
	\beq
	\begin{array}{ll}
	V^+(v_1,v_2) = 0 & V^-(v_1,v_2) = a_2\mathcal{L}_2\\[1mm]
	V^+(v_2,v_2) = a_1\mathcal{L}_1 & V^-(v_2,v_2) = a_2\mathcal{L}_2\\[1mm]
	V^+(v_2, v_3) = a_1\mathcal{L}_1+a_2\mathcal{L}_2 \hspace{.2in} & V^-(v_2,v_3) = 0\\[1mm]
	\end{array}
	\eeq
	
\end{example}

\section{Presentations for inertial Chow rings}\label{ssec:chow-results}

	
	Let $\cX(\stSig)$ be a toric stack with stacky fan $\stSig=(N,\Sigma,\beta)$, and let $b_1,\dots,b_n$ be the distinguished points in $N$ defined by $\beta$. Let $\Boxx(\stSig)=\{v_1,\dots,v_k\}$. 

Let $\SR_{\stSig}$ be the stacky Stanley-Reisner ring of the stacky fan, which equals the intergral Chow ring of the stack $\cX(\stSig)$. 
The following is an immediate consequence of Proposition \ref{prop.sectorChow}
\begin{proposition}\label{prop:underlying-module-imm}
	Let $\cX(\stSig)$ be a toric Deligne-Mumford stack associated to the stacky fan $\stSig$, then for any inertial product we have the following isomorphism of $\SR_{\stSig}$-modules:
		\beq
		A^*(\cX(\stSig),\star,\Z)
		\cong 
		\bigoplus_{v\in\Boxx(\stSig)}\SR_{\stSig}/I_{\stSig,v}
		\eeq
where $I_{\stSig,v}$ is the ideal defined in the statement of Proposition \ref{prop.sectorChow}.
\end{proposition}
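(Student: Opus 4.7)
The plan is to observe that the $\SR_\stSig$-module structure on $A^*(\cX(\stSig),\star,\Z)$ is independent of the choice of inertial product $\star$: by Definition \ref{def:inertial-product}, an inertial product modifies only the multiplication on $A^*(I\cX(\stSig),\Z)$, while the underlying abelian group and the $\SR_\stSig$-action (induced by pullback along the natural projection $I\cX(\stSig)\to\cX(\stSig)$) are intrinsic to the inertia stack. Thus it suffices to compute $A^*(I\cX(\stSig),\Z)$ as an $\SR_\stSig$-module.

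First I would use the description recalled in Section \ref{sec:toricinertia} (based on \cite[Lemma 4.6]{BCS:05} and Proposition \ref{prop:box-corr}) that the inertia stack decomposes as a disjoint union of twisted sectors indexed by box elements,
\beq
I\cX(\stSig) \;=\; \coprod_{v\in\Boxx(\stSig)} \cX(\stSig/\sigma(\vbar)),
\eeq
where the component labeled by $v$ is the closed substack $[Z^g/G]\cong\cX(\stSig/\sigma(\vbar))$ for $g\in G$ corresponding to $v$. Since Chow groups convert disjoint unions into direct sums, this yields an isomorphism of abelian groups
\beq
A^*(I\cX(\stSig),\Z) \;\cong\; \bigoplus_{v\in\Boxx(\stSig)} A^*(\cX(\stSig/\sigma(\vbar)),\Z).
\eeq

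Second, I would apply Proposition \ref{prop.sectorChow} to each summand, which identifies $A^*(\cX(\stSig/\sigma(\vbar)),\Z)$ with $\SR_\stSig/I_{\stSig,v}$ as an $\SR_\stSig$-module, where the module structure is exactly the one induced by pullback along the inclusion of the component. Combining these two steps gives the desired isomorphism. There is no substantive obstacle here, the only point requiring any care being the compatibility of the module structure on each summand with the one appearing in Proposition \ref{prop.sectorChow}, which is immediate since both are defined by pullback along the same closed immersion $\cX(\stSig/\sigma(\vbar))\hookrightarrow\cX(\stSig)$.
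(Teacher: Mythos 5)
Your proposal is correct and follows the same route as the paper: the paper treats this as an immediate consequence of Proposition \ref{prop.sectorChow}, relying on exactly the decomposition of $I\cX(\stSig)$ into twisted sectors indexed by $\Boxx(\stSig)$ and the observation that the $\SR_{\stSig}$-module structure does not depend on the choice of $\star$. No issues.
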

	
We now work to define the algebra structure on $A^*(\ix(\stSig), \star, \Z)$.
We begin by defining the ring $R_\stSig$ as the quoteint $\SR_{\stSig}[\{y^v\}_{v \in  \Boxx(\stSig)}]/
\sum_v I_{\stSig,v}y^v$. Every inertial Chow ring will be a quotient of $R_\stSig$.

\begin{proposition} \label{prop.conerelations}
Let $v_1$ and $v_2$ be elements of $\Boxx(\stSig)$. 
If $\star$ is any inertial product then $y^{v_1} \star y^{v_2} = 0$
if $v_1, v_2$ do not lie in a common cone.
\end{proposition}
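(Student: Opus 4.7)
The plan is to derive the vanishing directly from Remark \ref{rem.doublebox}. Under the isomorphism of Proposition \ref{prop:underlying-module-imm}, the class $y^{v_i}$ corresponds to $1 \in \SR_\stSig/I_{\stSig,v_i}$, i.e., to the fundamental class of the twisted sector of $I\ix(\stSig)$ indexed by $v_i$. By Definition \ref{def:inertial-product},
\[ y^{v_1}\star y^{v_2} = \mu_*\bigl(e_1^* y^{v_1}\cdot e_2^* y^{v_2}\cdot c\bigr) \]
for the class $c \in A^*_G(\I^2 Z)$ determining the inertial product $\star$. It will therefore suffice to show that the product $e_1^* y^{v_1}\cdot e_2^* y^{v_2}$ already vanishes in $A^*_G(\I^2 Z)$, independently of $c$.

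Both the inertia stack $I\ix(\stSig)$ and the double inertia stack $\I^2\ix(\stSig)$ decompose as disjoint unions of their components, indexed respectively by $\Boxx(\stSig)$ and $\Boxx^2(\stSig)$, and the evaluation maps $e_1, e_2$ respect this decomposition componentwise (sending the $(w_1,w_2)$-component to the $w_1$- and $w_2$-components). Consequently $e_1^* y^{v_1}$ is supported on the union of components of $\I^2\ix(\stSig)$ indexed by pairs $(v_1, w) \in \Boxx^2(\stSig)$, while $e_2^* y^{v_2}$ is supported on the union of those indexed by pairs $(w, v_2) \in \Boxx^2(\stSig)$. Their product is thus supported on the (at most one) component indexed by the pair $(v_1, v_2)$.

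By Remark \ref{rem.doublebox}, the pair $(v_1, v_2)$ belongs to $\Boxx^2(\stSig)$ if and only if $v_1$ and $v_2$ lie in a common cone of $\Sigma$. Under the hypothesis of the proposition they do not, so no such component of $\I^2\ix(\stSig)$ exists and $e_1^* y^{v_1}\cdot e_2^* y^{v_2} = 0$ in $A^*_G(\I^2 Z)$. Multiplying by $c$ and applying $\mu_*$ preserves this zero, so $y^{v_1}\star y^{v_2} = 0$.

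The only substantive step is the support bookkeeping via the componentwise decomposition of $I\ix$ and $\I^2\ix$; everything else is formal. Since the argument never touches the specific class $c$, the same vanishing holds uniformly for every inertial product, which is the content asserted in the statement.
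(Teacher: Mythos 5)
Your proof is correct and follows essentially the same route as the paper's: both reduce the vanishing to the fact (Remark \ref{rem.doublebox}) that $Z^{g_1,g_2}=\emptyset$ when $v_1,v_2$ do not lie in a common cone, so the product $e_1^*y^{v_1}\cdot e_2^*y^{v_2}$ is supported on an empty component of $\I^2_GZ$ and hence vanishes before $c$ or $\mu_*$ ever enter. Your componentwise support bookkeeping just makes explicit what the paper's terser assertion that $e_1^*y^{v_1}=e_2^*y^{v_2}=0$ leaves implicit.
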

\begin{proof}
Writing $\ix(\stSig) = [Z/G]$ then  $v_1,v_2$ correspond to elements
 $g_1, g_2 \in G$ such that $Z^{g_1}, Z^{g_2} \neq \emptyset$. However $Z^{g_1,g_2}= \emptyset$ if $v_1, v_2$ do not lie in a common cone. Thus $e_1^*y^{v_1} =e_2^*y^{v_2} = 0$ so $y^{v_1} \star y^{v_2} = 0$
\end{proof}
Thanks to Proposition \ref{prop.conerelations} we make the following definition.

	\begin{definition} \label{def.conerelations}
	We define the \emph{cone relations ideal} in $R_\stSig$ to be
	\begin{equation*}
	\CR(\stSig):= \vr{ y^{v_i}y^{v_j} \st v_i, v_j\in\Boxx(\stSig) \text{ and no cone contains both }v_i\text{ and }v_j  }.
	\end{equation*}
	\end{definition}

\begin{proposition} \label{prop.twistfunction}
  Let $\cX(\stSig)$ be a toric stack and let $v_1, v_2 \in
  \Boxx(\stSig)$ such that $v_1, v_2$ lie in a common cone
  $\sigma$. Let $v_3$ be the box element representing $v_1 + v_2$.
and corresponding to $g_3 \in G$.
If $\star$ is any inertial product then there exists a class
$\Tw(\star)(v_1,v_2) \in A^*_G(Z^{g_3})$ such that 
\begin{equation}\label{eq:twfn}
				y^{v_1}\star y^{v_2} = y^{v_3}\cdot
					\Tw(\star)(v_1,v_2)\cdot
				\prod_{q_{1,i}+q_{2,i}=1}c_1(\mathcal{L}_i),
				\end{equation}
\end{proposition}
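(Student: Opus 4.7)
The plan is to unpack the definition of $\star$ at the level of twisted-sector components, reduce to a pushforward along the closed inclusion $\mu: Z^{g_1,g_2}\hookrightarrow Z^{g_3}$, and then apply the equivariant projection formula together with the self-intersection formula.

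By Proposition~\ref{prop:underlying-module-imm}, each $y^{v_j}$ is the identity element of the $v_j$-summand of $A^*(I\sX)$ and zero on the others. Hence $e_1^*y^{v_1}\cdot e_2^*y^{v_2}$ is supported only on the $(v_1,v_2)$-component $[Z^{g_1,g_2}/G]$ of $\I^2\sX$, where both pullbacks restrict to $1$. Writing $c$ for the Euler class that defines $\star$ (for instance $c=\eu(\mathscr{R}^{\pm}V)$ for $\star_{V^{\pm}}$), Definition~\ref{def:inertial-product} collapses to
\[
y^{v_1}\star y^{v_2} \;=\; \mu_*\bigl(c|_{(v_1,v_2)}\bigr),
\]
and this element lives naturally in the $v_3$-summand $A^*_G(Z^{g_3})$, matching the factor $y^{v_3}$ on the right of \eqref{eq:twfn}.

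Next I would identify the normal bundle of $\mu$. Lemma~\ref{lem:equiv} shows that $Z^g$ is cut out of $Z$ by the equations $\{z_i = 0 : q_i \neq 0\}$. Since $v_3$ is the box representative of $v_1+v_2$ in the common cone $\sigma$, we have $q_{3,i} = q_{1,i}+q_{2,i}$ when this sum is less than $1$ and $q_{3,i} = q_{1,i}+q_{2,i}-1$ otherwise; in particular $q_{3,i} = 0$ precisely when either both $q_{j,i}$ vanish or $q_{1,i}+q_{2,i}=1$. Consequently the additional coordinates cutting $Z^{g_1,g_2}$ out of $Z^{g_3}$ are indexed exactly by $\{i : q_{1,i}+q_{2,i}=1\}$, and $\mu$ is a regular embedding with equivariant normal bundle $\bigoplus_{q_{1,i}+q_{2,i}=1}\mathcal{L}_i$, whose top Chern class is $\prod_{q_{1,i}+q_{2,i}=1}c_1(\mathcal{L}_i)$.

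Finally, Proposition~\ref{prop.relativecase} applied to the nested linear subspaces $Z^{g_1,g_2}\subset Z^{g_3}$ shows that $\mu^*: A^*_G(Z^{g_3}) \to A^*_G(Z^{g_1,g_2})$ is surjective, so we may pick any lift $\Tw(\star)(v_1,v_2)\in A^*_G(Z^{g_3})$ of $c|_{(v_1,v_2)}$. The equivariant self-intersection formula gives $\mu_*(1) = \prod_{q_{1,i}+q_{2,i}=1} c_1(\mathcal{L}_i)$, and the projection formula then yields
\[
\mu_*\bigl(c|_{(v_1,v_2)}\bigr) = \mu_*\bigl(\mu^*\Tw(\star)(v_1,v_2)\bigr) = \Tw(\star)(v_1,v_2)\cdot\prod_{q_{1,i}+q_{2,i}=1}c_1(\mathcal{L}_i),
\]
which is exactly \eqref{eq:twfn} after multiplication by the identity $y^{v_3}$ of the $v_3$-summand. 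The only genuinely combinatorial step is the case analysis in the previous paragraph; independence of the final expression from the choice of lift follows immediately from a further application of the projection formula, since any $k\in\ker\mu^*$ satisfies $k\cdot\mu_*(1) = \mu_*(\mu^*k) = 0$.
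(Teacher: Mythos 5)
Your proof is correct and follows essentially the same route as the paper's: reduce to the pushforward along the closed embedding $Z^{g_1,g_2}\hookrightarrow Z^{g_3}$, identify $\mu_*(1)$ with $\prod_{q_{1,i}+q_{2,i}=1}c_1(\mathcal{L}_i)$, use surjectivity of $\mu^*$ to choose a lift of $c$, and conclude by the projection formula. You simply make explicit the combinatorial identification of the normal bundle and the well-definedness of $\Tw(\star)(v_1,v_2)$, which the paper leaves implicit.
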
 
\begin{proof}
With notation that we have established, the class $\prod_{q_{1,i}+q_{2,i}=1}c_1(\mathcal{L}_i)$ is the class
of $\mu_*([Z^{g_1,g_2}])$ in $A^*(Z^{g_3})$. Moreover, $A^*(Z^{g_1,g_2})$ is generated as an algebra by $A^*(Z^{g_3})$. Hence by the projection formula
$$\mu_*\left(e_1^* y^{v_1} \cdot e_2^*(y^{v_2}) \cdot c)\right)
= y^{v_3}\cdot
					\Tw(\star)(v_1,v_2)\cdot \prod_{q_{1,i}+q_{2,i}=1}c_1(\mathcal{L}_i),$$
for some class $\Tw(\star)(v_1,v_2) \in A^*(Z^{g_3})$ whose restriction to
$A^*(Z^{g_1,g_2})$ is $c$.				
\end{proof}

\begin{definition}
  Let $\star$ be an inertial product on the toric stack
  $\sX=\cX(\stSig)$ associated to the stacky fan $\stSig=(N,
  \Sigma,\beta)$.  The \emph{twisting
    function for the inertial product $\star$} is the mapping \beq
  \Tw(\star):\Boxx^2(\stSig) \to A^*(\I\sX) \eeq
  which sends a box pair $(v_1, v_2)$ to the class completely
  determined by $\star$ in $A^*(\I\sX)$. (Recall  - Remark \ref{rem.doublebox}
- that $\Boxx^2(\stSig)$ is the set of pairs $(v_1,v_2)$ such that $v_1, v_2$
lie in a common cone.) Its component at 
$g_3 = g_1g_2$, where
  $g_1$ and $g_2$ are the elements of $G$ corresponding to $v_1$ and
  $v_2$ is given as follows:
				\begin{equation}\label{eq:twfn}
				y^{v_1}\star y^{v_2} = y^{v_3}\cdot
					\Tw(\star)(v_1,v_2)\cdot
				\prod_{q_{1,i}+q_{2,i}=1}c_1(\mathcal{L}_i),
				\end{equation}
		where $v_3$ is the unique box element such that $v_1+v_2=v_3$ under box addition. 
\end{definition}

As a first
example, we'll compute the twisting function for the orbifold product
$\star_{orb}$ of Definition~\ref{def:orb-prod}.

\begin{example}
  By Definition~\ref{def:orb-prod}, the orbifold product $\star_{orb}$ is given by $\star_c$ where $c=\eu(LR(\T))=\eu(LR(\T))$. Then for any pair
  $(v_1,v_2)\in\Boxx^2(\stSig)$ with corresponding pair $g_1,g_2\in G$, let $v_3=v_1+v_2$ under box addition. Then we have 
    \beq
    y^{v_1}\star_{orb}y^{v_2}=y^{v_3}\cdot
    \Tw(\star_c)(v_1,v_2)\cdot
	\prod_{q_{1,i}+q_{2,i}=1}c_1(\mathcal{L}_i)
    \eeq
    \beq
    = y^{v_3}
    \cdot \eu(LR(\T)(g_1,g_2,(g_1g_2)^{-1}))
	\prod_{q_{1,i}+q_{2,i}=1}c_1(\mathcal{L}_i)
    \eeq    
    \beq
    =y^{v_3}\cdot
    \prod_{q_{1,i}+q_{2,i}>1}c_1(\mathcal{L}_i)
	\prod_{q_{1,i}+q_{2,i}=1}c_1(\mathcal{L}_i)
    \eeq  
    \beq
    =y^{v_3}\cdot
    \prod_{q_{1,i}+q_{2,i}\geq1}c_1(\mathcal{L}_i)
    \eeq  
	Thus, by (\ref{eq:twfn}) the twisting function for the orbifold product is: 
		\beq
		\Tw(\star_{orb})(v_1,v_2)=\prod_{q_{1,i}+q_{2,i}>1}c_1(\mathcal{L}_i) = \prod_{q_{1,i}+q_{2,i}>1}\widetilde{x}_i.
		\eeq
\end{example}

The purpose of the twisting function, illustrated in this example, is
to give a succinct way of describing exactly how $c$ affects the
inertial product. We'll use this in the next definition as we continue
our quest to provide a ring presentation for the inertial Chow ring.

\begin{definition}
	Define $\BR(\star,\stSig)$ to be the ideal 
	\beq
	\vr{\left.y^{v_1}y^{v_2}- y^{v_3}\cdot \Tw(\star)(v_1,v_2)\cdot\prod_{q_{1,i}+q_{2,i}=1} \widetilde{x}_i\right| v_1,v_2\in\Boxx^2(\stSig) }.
	\eeq
\end{definition}

\begin{theorem}\label{thm:inertial-chow-ring}
	For any toric stack $\cX(\stSig)$ and any inertial product $\star$ of the form $\star_{V^+}$ or $\star_{V^-}$ for some $G$-equivariant vector bundle $V$, we have an isomorphism of $\Z$-graded rings
	\beq
	A^*(\cX(\stSig),\star,\Z)\cong\dfr{R_\stSig}{\CR(\stSig)+\BR(\star,\stSig)},
	\eeq
	where the isomorphism is given by $y^{b_i}\mapsto c_1(\mathcal{L}_i)$.
\end{theorem}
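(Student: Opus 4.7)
The plan is to build an explicit surjective ring homomorphism
$\Phi\colon R_\stSig \to A^*(\cX(\stSig),\star,\Z)$, verify that its kernel contains $\CR(\stSig) + \BR(\star,\stSig)$, and finally use the known $\SR_\stSig$-module decomposition of Proposition~\ref{prop:underlying-module-imm} to show this inclusion is an equality. Concretely, send each $\tilde x_i$ to $c_1(\LL_i) \in A^*(\cX(\stSig))$ (the untwisted sector) and send each generator $y^v$ to the fundamental class of the twisted sector indexed by $v$, viewed as an element of $A^*(I\cX(\stSig))$ under the module identification $A^*(\cX(\stSig/\sigma(\vbar)))\simeq \SR_\stSig/I_{\stSig,v}$ of Proposition~\ref{prop.sectorChow}.

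To see $\Phi$ is well-defined, the Stanley-Reisner-type relations in $\SR_\stSig$ come from Proposition~\ref{prop:JiangTseng}, while the relations $I_{\stSig,v}\cdot y^v = 0$ in $R_\stSig$ are exactly the relations cutting out the Chow group of the $v$-th twisted sector as an $\SR_\stSig$-module, again from Proposition~\ref{prop.sectorChow}. The map is clearly surjective since $\SR_\stSig$ surjects onto $A^*$ of each component and the $y^v$ exhaust the components of $I\cX(\stSig)$ via the box correspondence of Proposition~\ref{prop:box-corr}.

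Next, I verify that $\CR(\stSig)$ and $\BR(\star,\stSig)$ both lie in $\ker\Phi$. The cone relations vanish under $\Phi$ by Proposition~\ref{prop.conerelations}: if $v_1,v_2$ do not share a cone then $Z^{g_1,g_2} = \emptyset$ and the inertial product $y^{v_1}\star y^{v_2}$ is zero by definition. For the box relations, when $v_1,v_2$ do share a cone, Proposition~\ref{prop.twistfunction} together with the explicit form of the twisting function for $\star_{V^\pm}$ (obtained by combining Lemma~\ref{lem:computeLi+}, Proposition~\ref{prop:computeV+}, and the formula $\eu(\LR(\T)+R^\pm V)$ from Theorem~\ref{thm:inertial-is-assoc}) gives precisely the relation defining $\BR(\star,\stSig)$. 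That is, the factor $\prod_{q_{1,i}+q_{2,i}=1}\tilde x_i$ records the pushforward $\mu_*[Z^{g_1,g_2}]$, while $\Tw(\star)(v_1,v_2)$ records the Euler class of the obstruction bundle.

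The main obstacle is showing the induced map $\bar\Phi\colon R_\stSig/(\CR+\BR) \to A^*(\cX(\stSig),\star,\Z)$ is injective. Here the strategy is to show that every element of $R_\stSig$ can be reduced modulo $\CR+\BR$ to a normal form $\sum_v y^v p_v$ with $p_v$ a representative in $\SR_\stSig$ modulo $I_{\stSig,v}$: the cone relations kill all mixed monomials $y^{v_1}y^{v_2}$ outside $\Boxx^2(\stSig)$, and the box relations rewrite any remaining $y^{v_1}y^{v_2}$ as $y^{v_3}$ times an element of $\SR_\stSig$. Because $\Phi$ applied to this normal form lands in the summand $\SR_\stSig/I_{\stSig,v}$ under the module isomorphism of Proposition~\ref{prop:underlying-module-imm}, any element in $\ker\bar\Phi$ must have each $p_v$ lying in $I_{\stSig,v}$, but these relations are already built into $R_\stSig$. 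This rigidity argument turns the additive isomorphism of Proposition~\ref{prop:underlying-module-imm} into the desired ring isomorphism, completing the proof.
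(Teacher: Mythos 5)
Your proposal is correct and follows essentially the same route as the paper, whose proof simply cites Proposition~\ref{prop.conerelations} (cone relations) and Proposition~\ref{prop.twistfunction} (box relations via the twisting function). The extra work you do --- reducing to a normal form $\sum_v p_v y^v$ and invoking the direct-sum module decomposition of Proposition~\ref{prop:underlying-module-imm} to get injectivity --- is exactly the bookkeeping the paper leaves implicit, and it is sound.
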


\begin{proof}
This follows from Proposition \ref{prop.conerelations} and 
Proposition \ref{prop.twistfunction}.
\end{proof}

We now compute the BR ideal for some of the inertial products defined
in \cite{EJK:16}.
	
\begin{proposition}\label{prop:br-v-plus-minus}
Let $\stSig$ be a stacky fan. For any $G$-equivariant bundle of the form 
$V=\sum a_i\LL_i$ with $a_i \geq 0$,  
the box relations ideal for the $\star_{V^+}$ product is
	\begin{eqnarray*}
          \BR(\star_{V^+},\stSig) & =& 
          \vr{\left.y^{v_1}y^{v_2}- y^{v_3}\cdot\prod_{
                i \in B^+_\stSig(v_1,v_2)} \eu(\mathcal{L}_i+a_i\mathcal{L}_i)\right| (v_1,v_2)\in\Boxx^2(\stSig) }\\
& = & \vr{\left.y^{v_1}y^{v_2}- y^{v_3}\cdot\prod_{
                i \in B^+_\stSig(v_1,v_2)} \tilde{x_i}^{a_i + 1}\right| (v_1,v_2)\in\Boxx^2(\stSig) },
          \end{eqnarray*}
          and the box relations ideal for the $\star_{V^-}$ product is
          \begin{eqnarray*}
          \BR(\star_{V^-},\stSig) & =& \vr{\left.y^{v_1}y^{v_2}- y^{v_3}\cdot\prod_{i \in B^-_\stSig(v_1,v_2)} \eu(a_i(\mathcal{L}_i)\cdot\prod_{j \in B^+_\stSig(v_1,v_2)}c_1(\mathcal{L}_j) \right| (v_1,v_2)\in\Boxx^2(\stSig) }\\
& =  & \vr{\left.y^{v_1}y^{v_2}- y^{v_3}\cdot\prod_{i \in B^-_\stSig(v_1,v_2)} \tilde{x}_i^{a_i} \cdot\prod_{j \in B^+_\stSig(v_1,v_2)}\tilde{x}_j \right| (v_1,v_2)\in\Boxx^2(\stSig) }
.
          \end{eqnarray*}
\end{proposition}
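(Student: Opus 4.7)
The plan is to compute the twisting function $\Tw(\star_{V^\pm})(v_1,v_2)$ explicitly from Proposition~\ref{prop:computeV+} and then substitute the answer into the definition of $\BR(\star,\stSig)$. By Theorem~\ref{thm:inertial-is-assoc}, the class defining $\star_{V^\pm}$ is $\eu(\mathscr{R}^\pm V)=\eu(LR(\T)+R^\pm V)$. Since each component of both $LR(\T)$ and $R^\pm V$ is a non-negative $K$-theoretic sum of line bundles $\mathcal{L}_i$, Euler classes split as a product, so on the component indexed by $(v_1,v_2)$ one has
\[
\eu(\mathscr{R}^\pm V)|_{(v_1,v_2)} = \eu(LR(\T))|_{(v_1,v_2)} \cdot \eu(R^\pm V)|_{(v_1,v_2)}.
\]

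First I would reuse the orbifold calculation carried out in the example immediately preceding this proposition: on the component $(v_1,v_2)$, the combined contribution $\eu(LR(\T))|_{(v_1,v_2)}\cdot \prod_{q_{1,i}+q_{2,i}=1}\tilde{x}_i$ that appears in the defining equation~(\ref{eq:twfn}) simplifies to $\prod_{i \in B^+_\stSig(v_1,v_2)}\tilde{x}_i$. The indices $i$ with $q_{1,i}+q_{2,i}>1$ enter through the orbifold obstruction bundle $\eu(LR(\T))$, the indices with $q_{1,i}+q_{2,i}=1$ enter through the fundamental class $[Z^{g_1,g_2}]$ via $\mu_\ast$, and by the trichotomy of Lemma~\ref{lem:computeLi+} these two sets of indices together comprise exactly $B^+_\stSig(v_1,v_2)$.

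Next I would apply Proposition~\ref{prop:computeV+} directly: on $(v_1,v_2)$ we have $R^+V=\sum_{i\in B^+_\stSig(v_1,v_2)}a_i\mathcal{L}_i$ and $R^-V=\sum_{i\in B^-_\stSig(v_1,v_2)}a_i\mathcal{L}_i$, whose Euler classes are $\prod_{i\in B^+_\stSig(v_1,v_2)}\tilde{x}_i^{a_i}$ and $\prod_{i\in B^-_\stSig(v_1,v_2)}\tilde{x}_i^{a_i}$ respectively, since each $\mathcal{L}_i$ is a line bundle with $c_1(\mathcal{L}_i)=\tilde{x}_i$. Multiplying the two contributions and substituting into~(\ref{eq:twfn}) produces the advertised formulas for $y^{v_1}\star_{V^\pm}y^{v_2}$, which are precisely the stated generators of $\BR(\star_{V^\pm},\stSig)$. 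I do not anticipate a serious obstacle beyond careful bookkeeping: the main point is to ensure that the factor $\prod_{q_{1,i}+q_{2,i}=1}\tilde{x}_i$ in~(\ref{eq:twfn}) is absorbed into the $B^+_\stSig(v_1,v_2)$ product in both the $V^+$ and $V^-$ cases, which follows cleanly from the case distinction of Lemma~\ref{lem:computeLi+} together with the disjointness of $B^+_\stSig$ and $B^-_\stSig$.
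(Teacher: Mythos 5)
Your proposal is correct and follows essentially the same route as the paper's proof: both compute $\eu(\mathscr{R}^{\pm}V)$ on the component indexed by $(v_1,v_2)$ by combining the orbifold contribution $\eu(LR(\T))$ with $\eu(R^{\pm}V)$ as given by Proposition~\ref{prop:computeV+}, and then absorb the normal-bundle factor $\prod_{q_{1,i}+q_{2,i}=1}c_1(\mathcal{L}_i)$ from the pushforward $\mu_*$ so that the indices with $q_{1,i}+q_{2,i}>1$ and those with $q_{1,i}+q_{2,i}=1$ merge into the product over $B^+_{\stSig}(v_1,v_2)$. The bookkeeping you describe is exactly what the paper carries out.
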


\begin{proof}
	For any $v_1,v_2\in\Boxx(\stsig)$ for some cone $\sigma\in\Sigma$, we have a corresponding $g_1,g_2\in G$ and we can write $\ol{v_1}=\sum_{\rho_i\in\sigma}q_{1,i}\ol{b_i}$ and $\ol{v_2}=\sum_{\rho_i\in\sigma}q_{2,i}\ol{b_i}$. Let $v_3=v_1+v_2$ under addition in $\Boxx(\stsig)$. 
	
	We first compute the twisting function for the $\star_{V^+}$ product.  To do so, we need the Euler class of $\mathscr{R}^+V$, using Proposition~\ref{def:E-plus-minus} and Proposition~\ref{prop:computeV+}. We have
	\begin{align*}
	\left(R^+V+LR(\T)\right)(g_1,g_2)
	= V^+(g_1,g_2)+LR(\T)(g_1,g_2) 
	= \sum_{i \in B^+_\stSig(v_1,v_2)} a_i\mathcal{L}_i + \sum_{q_{1,j}+q_{2,j}>1}\mathcal{L}_j,
	\end{align*}
	and so
	\beq
	\Tw(\star_{V^+})(v_1,v_2)=\eu\left(\sum_{i \in B^+_\stSig(v_1,v_2)} a_i\mathcal{L}_i + \sum_{q_{1,j}+q_{2,j}>1}\mathcal{L}_j\right) 
	\eeq
	\beq
	=\prod_{B^+_\stSig(v_1,v_2)} \eu(a_i\mathcal{L}_i))\cdot \prod_{q_{1,i}+q_{2,i}>1}c_1(\mathcal{L}_i).
	\eeq

	The normal bundle for $\bv g=(g_1,g_2)$ is $\sum \mathcal{L}_i$, where the sum is over all $i$ such that $\rho_i\in(\ol{v_1},\ol{v_2})$ but $\rho_i\notin\sigma(\ol{v_3})$. Equivalently, we could say the sum is over all $i$ such that $q_{1,i}+q_{2,i}=1$. 
	
Then we have
	\begin{align*}
	y^{v_1}\star_{V^+}y^{v_2} 
	= & \mu_*(e_1^*y^{v_1}\cdot e_2^*y^{v_2}\cdot \eu(\mathscr{R}^+V))\\
	= & y^{v_3}\cdot\left(\prod_{i \in B^+_\stSig(v_1,v_2)} 
\eu(a_i\mathcal{L}_i)\cdot \prod_{q_{1,j}+q_{2,j}>1}c_1(\mathcal{L}_j)\right)\cdot \prod_{q_{1,k}+q_{2,k}=1} c_1(\mathcal{L}_k)\\
	= & y^{v_3}\cdot\prod_{i \in B^+_\stSig(v_1,v_2)} \eu(\mathcal{L}_i+a_i\mathcal{L}_i)
	\end{align*}
where the last equality follows because $B^+_{\stSig}(v_1, v_2) =\{i | q_{i,1} +q_{i,2} \geq 1\}$.
	
	Thus,
	\beq
	\BR(\star_{V^+},\stSig)=\vr{\left.y^{v_1} y^{v_2}- y^{v_3}\cdot\prod_{i \in B^+_\stSig(v_1,v_2)} \eu(\mathcal{L}_i+a_i\mathcal{L}_i) \right| v_1,v_2\in\Boxx^2(\stSig) }.
	\eeq

	If we instead consider the $\star_{V^-}$ product, we instead have the twisting function
	\beq
	\Tw(\star_{V^-})(v_1,v_2)=\eu\left(\sum_{i \in B^-_\stSig(v_1,v_2)} a_i\mathcal{L}_i + \sum_{q_{1,j}+q_{2,j}>1}\mathcal{L}_j\right)
	=\prod_{i \in B^-_\stSig(v_1,v_2)} \eu(a_i\mathcal{L}_i)\cdot
\prod_{q_{1,j}+q_{2,j}>1}c_1(\mathcal{L}_j)
	\eeq
	and so 
	\begin{align*}
	w_1\star_{V^-}w_2 
	= & \mu_*(e_1^*w_1\cdot e_2^*w_2\cdot \eu(\mathscr{R}^-V))\\
	= & w_3\cdot\left(\prod_{i \in B^-_\stSig(v_1,v_2)} \eu(a_i\mathcal{L}_i)
\cdot\prod_{q_{1,j}+q_{2,j}>1}c_1(\mathcal{L}_j)\right)\cdot\prod_{q_{1,k}+q_{2,k}=1} c_1(\mathcal{L}_k)\\
	= &w_3\cdot\prod_{i \in B^-_\stSig(v_1,v_2)} \eu((\mathcal{L}_i))\cdot
\prod_{q_{1,j}+q_{2,j}\geq1}c_1(\mathcal{L}_j). 
	\end{align*}
	Therefore,
	\beq
	\BR(\star_{V^-},\stSig)=\vr{\left.y^{v_1} y^{v_2} - y^{v_3}\cdot\prod_{i \in B^-_\stSig(v_1,v_2)} \eu(a_i\mathcal{L}_i)\cdot\prod_{j \in 
B^+_\stSig(v_1,v_2)}c_1(\mathcal{L}_j) \right| v_1,v_2\in\Boxx(\stSig) }.
	\eeq	
\end{proof}

\begin{corollary}
	For the virtual product of \cite{GLSUX:07}, we use $V=\T=\sum_{i=1}^n\mathcal{L}_i$ in the $\star_{V^-}$ product. Then
	\beq
	\Tw(\star_{virt})(v_1,v_2)=\prod_{\rho_i\subseteq\sigma(\ol{v_1}),\sigma(\ol{v_2})}c_1(\mathcal{L}_i).
	\eeq
\end{corollary}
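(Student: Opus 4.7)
The plan is to specialize Proposition \ref{prop:br-v-plus-minus} to the case $V=\T=\sum_{i=1}^n \mathcal{L}_i$ (so that $a_i=1$ for every $i$), read off the expression for $\Tw(\star_{V^-})$ at an intermediate stage of that proof, and then repackage the resulting product in terms of rays of cones via Lemma \ref{lem:equiv}.

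First I would invoke the intermediate formula from the $\star_{V^-}$ portion of the proof of Proposition \ref{prop:br-v-plus-minus}, which gives
\beq
\Tw(\star_{V^-})(v_1,v_2)
= \prod_{i \in B^-_\stSig(v_1,v_2)} \eu(a_i \mathcal{L}_i)\cdot\prod_{q_{1,j}+q_{2,j}>1} c_1(\mathcal{L}_j).
\eeq
Substituting $a_i=1$ collapses each Euler class $\eu(\mathcal{L}_i)$ to $c_1(\mathcal{L}_i)$, so $\Tw(\star_{virt})(v_1,v_2)$ becomes a single product of first Chern classes $c_1(\mathcal{L}_i)$ indexed by the disjoint union $B^-_\stSig(v_1,v_2) \sqcup \{j : q_{1,j}+q_{2,j}>1\}$, together with the remaining boundary contribution from indices with $q_{1,i}+q_{2,i}=1$ that enters via the normal-bundle factor $\prod_{q_{1,i}+q_{2,i}=1} c_1(\mathcal{L}_i)$ in equation \eqref{eq:twfn}.

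Next I would identify the combined index set geometrically. By Definition \ref{def:B+-}, membership in $B^-_\stSig(v_1,v_2)$ requires both $q_{1,i}$ and $q_{2,i}$ to be nonzero. Since each $q_{j,i}$ lies in $[0,1)$, the inequality $q_{1,j}+q_{2,j}>1$ also forces both to be nonzero, and the borderline equality $q_{1,i}+q_{2,i}=1$ does the same. The union of these three disjoint sets is precisely $\{i : q_{1,i}\neq 0 \text{ and } q_{2,i}\neq 0\}$. Lemma \ref{lem:equiv} then translates this condition into the geometric statement $\rho_i \subseteq \sigma(\ol{v_1})$ and $\rho_i\subseteq \sigma(\ol{v_2})$, which yields the desired formula.

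The main obstacle is the careful accounting of the boundary case $q_{1,i}+q_{2,i}=1$, whose factors do not arise from $R^-\T$ or $LR(\T)$ directly but rather enter implicitly through the pushforward normal-bundle factor in \eqref{eq:twfn}. Writing the twisting function in the compact form of the corollary amounts to exploiting the flexibility in Proposition \ref{prop.twistfunction} to choose the lift of $\eu(\mathscr{R}^-\T)$ from $A^*(Z^{g_1,g_2})$ to $A^*(Z^{g_3})$ that absorbs these boundary contributions into a single product over all rays of $\sigma(\ol{v_1})\cap\sigma(\ol{v_2})$.
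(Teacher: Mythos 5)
The paper offers no proof of this corollary---it is presented as an immediate specialization of Proposition~\ref{prop:br-v-plus-minus}---and your proposal follows exactly the route that specialization requires: set $a_i=1$, so each $\eu(a_i\mathcal{L}_i)$ becomes $c_1(\mathcal{L}_i)$, merge the index sets, and translate the resulting condition via Lemma~\ref{lem:equiv}. Your bookkeeping of the three disjoint sets is correct: $B^-_\stSig(v_1,v_2)$, $\{j\,:\,q_{1,j}+q_{2,j}>1\}$, and $\{i\,:\,q_{1,i}+q_{2,i}=1\}$ each force $q_{1,i},q_{2,i}\neq 0$ (the last because $q_{2,i}<1$), and together they exhaust $\{i\,:\,q_{1,i}\neq 0\text{ and }q_{2,i}\neq 0\}$, which Lemma~\ref{lem:equiv} identifies with $\{i\,:\,\rho_i\subseteq\sigma(\ol{v_1})\text{ and }\rho_i\subseteq\sigma(\ol{v_2})\}$. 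You have also correctly isolated the one genuine subtlety, namely that the boundary indices $q_{1,i}+q_{2,i}=1$ do not come from $\eu(\mathscr{R}^-\T)$ but from the normal-bundle factor in \eqref{eq:twfn}.

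Your proposed resolution of that subtlety, however, does not work as stated. The ambiguity in Proposition~\ref{prop.twistfunction} is additive---$\Tw$ is determined only up to the kernel of restriction to $A^*(Z^{g_1,g_2})$---whereas absorbing the boundary factor into $\Tw$ is a multiplicative change: if one sets $\Tw'=\Tw\cdot\prod_{q_{1,i}+q_{2,i}=1}c_1(\mathcal{L}_i)$ and substitutes into \eqref{eq:twfn}, the boundary classes appear squared and the identity $y^{v_1}\star y^{v_2}=y^{v_3}\cdot\Tw'\cdot\prod_{q_{1,i}+q_{2,i}=1}c_1(\mathcal{L}_i)$ fails in general. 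The honest conclusion of your computation is one of two equivalent statements: either the twisting function in the strict sense of \eqref{eq:twfn} is $\prod_{i\in B^-_\stSig(v_1,v_2)}c_1(\mathcal{L}_i)\cdot\prod_{q_{1,j}+q_{2,j}>1}c_1(\mathcal{L}_j)$, i.e.\ the product over both-nonzero indices \emph{excluding} the boundary ones; or else the displayed product over all $\rho_i\subseteq\sigma(\ol{v_1}),\sigma(\ol{v_2})$ is the full coefficient of $y^{v_3}$ in $y^{v_1}\star_{virt}y^{v_2}$, i.e.\ $\Tw(\star_{virt})(v_1,v_2)\cdot\prod_{q_{1,i}+q_{2,i}=1}c_1(\mathcal{L}_i)$. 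The corollary as printed conflates these (harmlessly in the paper's running example, where the boundary set is empty); your derivation of the index set is the correct content, and you should simply state which of the two objects the clean product formula computes rather than appeal to a choice of lift.
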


\begin{example}	
	For the toric stack $\cX(\stSig)$ of Example~\ref{ex:p654}, consider $v_1=(0,1)$ and $v_3=(1,2)$. 
	
	Then $q_{1,1}+q_{3,1}=0+\frac12=\frac12$ and $q_{1,2}+q_{3,2}=\frac12+\frac34=\frac54$. So under the orbifold product, we have $\Tw(v_1,v_3)(\star_{orb})=c_1(\mathcal{L}_2)$. 

	On the other hand, we have $\rho_2\in\sigma(\ol{v_1})$ while $\rho_1,\rho_2\in\sigma(\ol{v_3})$. So under the virtual product, we have $\Tw(v_1,v_3)(\star_{virt})=c_1(\mathcal{L}_2)$.
\end{example}

We now give an example of a complete presentation of the inertial Chow ring
for an intertial product coming from a vector bundle on $\ix(\stSig)$.

\begin{example}\label{ex:p654inertialchowring}
	Let $\stSig$ be the stacky fan of Example~\ref{ex:p654}, with $\cX(\stSig)=\PP(6,5,4)$. As before, let $\mathcal{L}_i$ be the $G$-equivariant line bundle associated to the $i$-th component of the action of $G$ on $\cX(\stSig)$, and to the ray $\rho_i$, for $i=1,2,3$. Let $V=a_1\mathcal{L}_1+a_2\mathcal{L}_2+a_3\mathcal{L}_3$, for non-negative integers $a_1, a_2, a_3$. 	We'll compute the Chow ring for the $\star_{V^+}$ product on $\cX(\stSig)$. 
	
	As stated previously, we can calculate the ring $R_\stSig$ and its ideals $I(\stSig)$, $\CR(\stSig)$ and $\Cir(\stSig)$ without regard to the vector bundle $V$, so we'll do this first.
	
	The distinguished points are $b_1=(2,1)$, $b_2=(0,2)$ and $b_3=(-3,-4)$, so the box elements are
	\beq\begin{array}{llll}
	v_1=(0,1)&
	v_4=(-1,0)&
	v_7=(-2,-2)\quad&
	v_{10}=(0,-1)\\
	v_2=(1,1)&
	v_5=(-1,-1)&
	v_8=(-2,-3)&
	v_{11}=(1,0)\\
	v_3=(1,2)\quad&
	v_6=(-2,-1)\quad&
	v_9=(-1,-2)	
	\end{array}\eeq 
	with $q_{j,i}$ being defined by $\ol{v_j}=q_{j,1}\ol{b_1}+q_{j,2}\ol{b_2}+q_{j,3}\ol{b_3}$ as follows:
	\beq\begin{array}{llll}
	\ol{v_1}=\frac12\ol{b_2}&
	\ol{v_4}=\frac23b_2+\frac13\ol{b_3}&
	\ol{v_7}=\frac13\ol{b_2}+\frac23\ol{b_3}\quad&
	\ol{v_{10}}=\frac35\ol{b_1}+\frac25\ol{b_3}\\[1mm]
	\ol{v_2}=\frac12\ol{b_1}+\frac14\ol{b_2}&
	\ol{v_5}=\frac16\ol{b_2}+\frac13\ol{b_3}&
	\ol{v_8}=\frac15\ol{b_1}+\frac45\ol{b_3}&
	\ol{v_{11}}=\frac45\ol{b_1}+\frac15\ol{b_3}\\[1mm]
	\ol{v_3}=\frac12\ol{b_1}+\frac34\ol{b_2}\quad&
	\ol{v_6}=\frac56\ol{b_2}+\frac23\ol{b_3}\quad&
	\ol{v_9}=\frac25\ol{b_1}+\frac35\ol{b_3}	
	\end{array}
	\eeq 
	along with $\bv0=(0,0)$ (see Figure~\ref{fig:p654box} for an intuition-aiding diagram). The fan $\stSig$ has three top-dimensional cones:
	\beq
	\Boxx(\stsig_1)=\{\bv0,v_1,v_2,v_3\} \quad
	\Boxx(\stsig_2)=\{\bv0,v_1,v_4,v_5,v_6,v_7\}\quad
	\Boxx(\stsig_3)=\{\bv0,v_8,v_9,v_{10},v_{11}\}
	\eeq
	Let $w_j:=y^{v_j}$ for $j=1,\dots,11$, and let $x_i:=y^{b_i}$ for $i=1,2,3$. 
Then with coefficients in $\Z$, the stacky Stanley-Reisner ring
is 
$$\SR_{\stSig} = \Z[x_1,x_2,x_3]/\left(I(\stSig) + \Cir(\stSig)\right).$$
Since the orbifold is reduced we have $x_i = \tilde{x}_i$ for each $i$.
Based solely on the fan structure, we can compute the irrelevant ideal:
	\beq
	I(\stSig)=\vr{x_1x_2x_3}
	\eeq
	Using the distinguished points we compute the circuit ideal:
	\beq
	\Cir(\stSig)=\vr{2x_1-3x_3,x_1+2x_2-4x_3}
	\eeq	
so $$\SR_{\stSig} = \Z[x_1,x_2, x_3]/\left(2x_1 - 3x_3, x_1 + 2x_2 -4x_4, x_1x_2x_3\right)$$
Since the box has 11 elements, the ring $R_{\stSig}$ is a quotient of the ring
$$\SR_{\stSig}[w_1, w_2, \ldots , w_{10}]$$ by the ideal
$$\sum_{i=1}^{11} I_{\stSig, v_i} = \left(x_1x_3w_1, x_3w_2, x_3w_2, x_1w_4, x_1w_5, x_1w_6, x_1w_7, x_2w_8, x_2w_9,x_2w_{10}, x_2 w_{11}\right).$$
	Finally, based on the boxes of the top-dimensional cones $\sigma_1$, $\sigma_2$ and $\sigma_3$, we have the cone relations ideal:
	\begin{align*}
	\CR(\stSig)=
	\left<\begin{array}{c}w_1w_8,w_1w_9,w_1w_{10},w_1w_{11},
	w_2w_4,w_2w_5,w_2w_6,w_2w_7,w_2w_8,w_2w_9,  \\
	w_2w_{10},w_2w_{11},w_3w_4,w_3w_5,w_3w_6,w_3w_7,w_3w_8,w_3w_9,w_3w_{10},w_3w_{11}, \\
	w_4w_8,w_4w_9,w_4w_{10},w_4w_{11},w_5w_8,w_5w_9,w_5w_{10},w_5w_{11},
	w_6w_8,w_6w_9,\\
	w_6w_{10},w_6w_{11},w_7w_8,w_7w_9,w_7w_{10},w_7w_{11}\end{array}\right>
	\end{align*}
	
	This leaves only box relations ideal, which depends on $V$. By Proposition~\ref{prop:br-v-plus-minus}, we have
	\beq
	\BR(\star_{V^+},\stSig)=\vr{\left.w_{j_1}w_{j_2}-w_{j_3}\cdot\prod_{q_{1,i}+q_{2,i}\geq1}c_1(\mathcal{L}_i)^{1+a_i} \right| v_{j_1},v_{j_2}\in\Boxx(\stSig)}
	\eeq
	For each of the three top-dimensional cones $\stsig$, we must compute $w_{j_1}w_{j_2}$ for every pair $(v_{j_1},v_{j_2})\in\Boxx(\stsig)$.  (Note that we need not  calculate for box elements which are not in the same top-dimensional cone, such as $y^{v_1}y^{v_8}$, since it's already an ideal generator in $\CR(\stSig)$.)
	
	So, the generators of $\BR(\star_{V^+},\stSig)$, or at least the ones that we need to pay attention to, are
	\beq
	\begin{array}{ll}
	w_1w_2-w_3 \hspace{1.8in}&
	w_1w_3-w_2\cdot c_1(\mathcal{L}_2)^{a_2+1}\\
	w_2w_3-1\cdot c_1(\mathcal{L}_1)^{a_1+1}\cdot c_1(\mathcal{L}_2)^{a_2+1}&
	{w_1}^2-1\cdot c_1(\mathcal{L}_2)^{a_2+1}\\
	{w_2}^2-w_1\cdot c_1(\mathcal{L}_1)^{a_1+1}&
	{w_3}^2-w_1\cdot c_1(\mathcal{L}_1)^{a_1+1}\cdot c_1(\mathcal{L}_2)^{a_2+1}\\
	w_1w_4-w_5\cdot c_1(\mathcal{L}_2)^{a_2+1}&
	w_1w_5-w_4\\
	w_1w_6-w_7\cdot c_1(\mathcal{L}_2)^{a_2+1}&
	w_1w_7-w_6\\
	w_4w_5-w_6&
	w_4w_6-w_1\cdot c_1(\mathcal{L}_2)^{a_2+1}\cdot c_1(\mathcal{L}_3)^{a_3+1}\\
	w_4w_7-1\cdot c_1(\mathcal{L}_2)^{a_2+1}\cdot c_1(\mathcal{L}_3)^{a_3+1}&
	w_5w_6-1\cdot c_1(\mathcal{L}_2)^{a_2+1}\cdot c_1(\mathcal{L}_3)^{a_3+1}\\
	w_5w_7-w_1\cdot c_1(\mathcal{L}_3)^{a_3+1}&
	w_6w_7-w_5\cdot c_1(\mathcal{L}_2)^{a_2+1}\cdot c_1(\mathcal{L}_3)^{a_3+1}\\
	{w_4}^2-w_7\cdot c_1(\mathcal{L}_2)^{a_2+1}&
	{w_5}^2-w_7\\
	{w_6}^2-w_4\cdot c_1(\mathcal{L}_2)^{a_2+1}\cdot c_1(\mathcal{L}_3)^{a_3+1}&
	{w_7}^2-w_4\cdot c_1(\mathcal{L}_3)^{a_3+1}\\
	w_8w_9-w_{10}\cdot c_1(\mathcal{L}_3)^{a_3+1}&
	w_8w_{10}-w_{11}\cdot c_1(\mathcal{L}_3)^{a_3+1}\\
	w_8w_{11}-1\cdot c_1(\mathcal{L}_1)^{a_1+1}\cdot c_1(\mathcal{L}_3)^{a_3+1}&
	w_9w_{10}-1\cdot c_1(\mathcal{L}_1)^{a_1+1}\cdot c_1(\mathcal{L}_3)^{a_3+1}\\
	w_9w_{11}-w_8\cdot c_1(\mathcal{L}_1)^{a_1+1}&
	w_{10}w_{11}-w_9\cdot c_1(\mathcal{L}_1)^{a_1+1}\\
	{w_8}^2-w_9\cdot c_1(\mathcal{L}_3)^{a_3+1}&
	{w_9}^2-w_{11}\cdot c_1(\mathcal{L}_3)^{a_3+1}\\
	{w_{10}}^2-w_8\cdot c_1(\mathcal{L}_1)^{a_1+1}&
	{w_{11}}^2-w_{10}\cdot c_1(\mathcal{L}_1)^{a_1+1}
	\end{array}
	\eeq
	From these generator relations, we can eliminate $w_3, w_4, w_6, w_7$.
	
	By the associated formula of $\stSig$ (see Lemma~\ref{lem:Li-gen-char-gp}), we have the following:
	\begin{align*}
	c_1(\mathcal{L}_1)=x_1=\widetilde{x}_1=6t\\
	c_1(\mathcal{L}_2)=x_2=\widetilde{x}_2=5t\\
	c_1(\mathcal{L}_3)=x_3=\widetilde{x}_3=4t
	\end{align*}
	
	Finally, we have our ring presentation for $A^*(\cX(\stSig),\star_{V^+},\Z)$:
	\beq
	\dfr
	{\Z[t,w_1,w_2,w_5,w_8,w_{9},w_{10},w_{11}]}
	{\left<
	\begin{array}{c}
	120t^3,24t^2w_1,4tw_2, 6tw_5, 5tw_8, 5tw_9, 5tw_{10}, 
5tw_{11},\\ 
w_1w_8,w_1w_{9}, w_1w_{10},w_1w_{11},w_2w_5,w_2w_8,w_2w_{9},w_2w_{10},w_2w_{11},w_5w_8,w_5w_{9},\\
	w_5w_{10},w_5w_{11},{w_1}^2-(5t)^{a_2+1},{w_2}^2-w_1(6t)^{a_1+1},
{w_5}^4-w_1w_5(4t)^{a_3+1},\\
	{w_8}^2-w_9(4t)^{a_3+1},w_8w_9-w_{10}(4t)^{a_3+1},w_8w_{10}-w_{11}(4t)^{a_3+1},\\
	w_8w_{11}-(6t)^{a_1+1}(4t)^{a_3+1},{w_9}^2-w_{11}(4t)^{a_3+1},w_9w_{10}-(6t)^{a_1+1}(4t)^{a_3+1},\\
	{w_{10}}^2-w_8(6t)^{a_1+1},w_9w_{11}-w_8(6t)^{a_1+1},{w_{11}}^2-w_{10}(6t)^{a_1+1},w_{10}w_{11}-w_9(6t)^{a_1+1}
	\end{array}
	\right>}
	\eeq
\end{example}


\section{A new asymptotic product}\label{sec:asymptotic}
Given a vector bundle $V$ on a toric stack $\ix(\stSig)$ we show that
we can produce new
associative products on $A^*(I\ix)_\Q$ by taking the limits of the $\star_{aV^+}$
and $\star_{aV^{-}}$ prodcuts where $a$ is a positive integer going to $\infty$.

\begin{theorem}\label{thm:asymptotic-vanishing}
The following formulas define associative products on $A^*(I\ix)_\Q$.

	\benu
	\item $ y^{v_1} \star_{+\infty} y^{v_2} =0$ if $B_\stSig^+(v_1,v_2)$ is nonempty, and
	\item $ y^{v_1} \star_{+\infty} y^{v_2} = y^{v_1} \star y^{v_2} $ otherwise.
	\eenu
	\benu
	\item $ y^{v_1} \star_{-\infty} y^{v_2} =0$ if $B_\stSig^-(v_1,v_2)$ is nonempty, and
	\item $ y^{v_1} \star_{-\infty} y^{v_2} = y^{v_1} \star y^{v_2} $ otherwise. 
	\eenu
(Here $\star$ is the usual orbifold product.)
\end{theorem}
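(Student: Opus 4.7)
The plan is to realize $\star_{+\infty}$ and $\star_{-\infty}$ as honest componentwise limits of the associative inertial products $\star_{(aV)^{+}}$ and $\star_{(aV)^{-}}$ for the fixed bundle $V=\sum_{i=1}^n\LL_i$, and to transfer associativity from finite $a$ to the limit. For this choice of $V$ the coefficient of each $\LL_i$ in $aV$ equals $a$, so Proposition~\ref{prop:br-v-plus-minus} reduces to
\[
y^{v_1}\star_{(aV)^{+}}y^{v_2}=y^{v_3}\prod_{i\in B^{+}_\stSig(v_1,v_2)}\tilde{x}_i^{a+1},\qquad y^{v_1}\star_{(aV)^{-}}y^{v_2}=y^{v_3}\prod_{i\in B^{-}_\stSig(v_1,v_2)}\tilde{x}_i^{a}\prod_{j\in B^{+}_\stSig(v_1,v_2)}\tilde{x}_j.
\]

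The first step is a vanishing lemma: in $A^*(I\ix)_\Q$, for any class $\alpha$ of bounded degree and any index $i$, one has $\tilde{x}_i^N\cdot\alpha=0$ for $N$ sufficiently large. This holds because, with rational coefficients, each connected component of $I\ix(\stSig)$ is a toric DM stack whose Chow ring agrees with that of its coarse moduli space, a toric variety of finite dimension, so the graded ring vanishes above that dimension. Applied to the formulas above this yields pointwise stabilization. When $B^{+}_\stSig(v_1,v_2)\neq\emptyset$, the factor $\tilde{x}_i^{a+1}$ for any $i\in B^{+}_\stSig(v_1,v_2)$ kills the product for $a\gg 0$, so the limit is $0$; when $B^{+}_\stSig(v_1,v_2)=\emptyset$, the formula collapses to $y^{v_3}$, which coincides with $y^{v_1}\star_{orb}y^{v_2}$ by the identity $y^{v_1}\star_{orb}y^{v_2}=y^{v_3}\prod_{i\in B^{+}_\stSig(v_1,v_2)}\tilde{x}_i$ derived earlier. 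The dichotomy for $B^{-}_\stSig$ in place of $B^{+}_\stSig$ identifies $\star_{-\infty}$ as the limit of the $\star_{(aV)^{-}}$ products.

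For associativity, fix homogeneous classes $\alpha,\beta,\gamma\in A^*(I\ix)_\Q$. By Theorem~\ref{thm:inertial-is-assoc}, $(\alpha\star_{(aV)^{+}}\beta)\star_{(aV)^{+}}\gamma=\alpha\star_{(aV)^{+}}(\beta\star_{(aV)^{+}}\gamma)$ for every positive integer $a$. Applying the vanishing lemma once, for $a$ sufficiently large the inner product $\alpha\star_{(aV)^{+}}\beta$ stabilizes to its limit $\alpha\star_{+\infty}\beta$, which is a class of fixed bounded degree; applying the lemma a second time we can enlarge $a$ further so that the outer product with $\gamma$ stabilizes to $(\alpha\star_{+\infty}\beta)\star_{+\infty}\gamma$. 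The same reasoning gives the right-associated triple in the limit. Equality of the two sides for each $a$ therefore descends to equality of the limits, proving associativity of $\star_{+\infty}$; the argument for $\star_{-\infty}$ is identical.

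The main obstacle is this nested use of the vanishing lemma: it requires that the intermediate product of two bounded-degree classes again lies in bounded degree, so that the second application of the lemma is legal. This is immediate from the explicit formulas above, since the stabilized value of $\alpha\star_{(aV)^{\pm}}\beta$ is either $0$ or a specific class (essentially $y^{v_3}$ times a product of $\tilde{x}_j$ over $j\in B^{+}_\stSig(v_1,v_2)$) whose degree does not depend on $a$.
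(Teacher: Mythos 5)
Your proposal is correct and follows essentially the same route as the paper: take $V=\sum_i\LL_i$, use Proposition~\ref{prop:br-v-plus-minus} to write $\star_{(aV)^{\pm}}$ explicitly, and invoke the vanishing of $A^*(I\ix)_\Q$ above the dimension to see that the products stabilize to the stated formulas, inheriting associativity from Theorem~\ref{thm:inertial-is-assoc}. Your treatment is in fact slightly more careful than the paper's, which leaves the two-step stabilization needed for the triple products implicit.
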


\begin{proof} Let $V = \sum_{i=1}^n {\mathcal L}_i$ and let $a$ be a
  positive integer.  The box relations ideal of
  Proposition~\ref{prop:br-v-plus-minus} for the $aV^+$ product becomes
	\begin{align*}
	\BR(\star_{aV^+},\stSig)=&
	\vr{\left.y^{v_1}y^{v_2} -y^{v_3}\cdot\prod_{i \in B^{+}_\stSig} c_1(\mathcal{L}_i))^{1+a} \right| v_1,v_2\in\Boxx(\stSig) }
\end{align*}
Now since $A^r(I\ix)_\Q =0$ for $r < \dim \ix$, the terms $c_1({\mathcal L}_i)^{1+a}$ vanish for $a$ sufficiently large. In this case the box relation
ideal becomes

$$\{y^{v_1}y^{v_2}| B_\stSig^+(v_1,v_2)\neq\emptyset\}\bigcup\{y^{v_1}y^{v_2} -y^{v_3}| B_\stSig^+(v_1,v_2)=\emptyset\} .$$
	
which gives $\star_{+ \infty}$ formula of Theorem \ref{thm:asymptotic-vanishing}.
The associativity of the $\star_{-\infty}$ product follows from a similar argument with the $\star_{aV^{-}}$product.
\end{proof}
Other associative products with fewer non-trivial products can be obtained by taking $V = {\mathcal L}_{i_1} + \ldots + {\mathcal L}_{i_m}$ where $\{i_1, \ldots i_m\}$ is a subset of $\{1, \ldots , n\}$.

\begin{example}\label{ex:p654inf}
	Consider the stacky fan $\stSig$ of Example~\ref{ex:p654inertialchowring}. We'll compute the Chow ring of $\cX(\stSig)$ with $\star_{+\infty}$ product 
Note that the calculation for the inertial Chow ring $A^*(\cX(\stSig),\star_{V^+_\infty},\Q)$ will be the same as with $A^*(\cX(\stSig),\star_{V^+},\Q)$, done
in Example \ref{ex:p654inertialchowring} except that all terms with $a_i$ as exponents vanish. So, we have
	\beq
	A^*(\cX(\stSig),\star_{+\infty},\Q) \cong
	\dfr
	{\Q[t,w_1,w_2,w_5,w_8,w_{9},w_{10},w_{11}]}
	{\left<
	\begin{array}{c}
	t^3,t^2w_1, tw_2, tw_5, tw_8, tw_9, tw_{10}, tw_{11}\\
w_1w_8,w_1w_{9},w_1w_{10},w_1w_{11},w_2w_5,w_2w_8,w_2w_{9},w_2w_{10},w_2w_{11},\\
	w_5w_8,w_5w_{9},w_5w_{10},w_5w_{11},{w_1}^2,{w_2}^2,{w_5}^4,{w_8}^2,w_8w_9,w_8w_{10},\\
	w_8w_{11},{w_9}^2,w_9w_{10},{w_{10}}^2,w_9w_{11},{w_{11}}^2,w_{10}w_{11}
	\end{array}
	\right>}
	\eeq
	
\end{example}

\bibliographystyle{amsmath}
\bibliography{refs}
\def\cprime{$'$}
\def\cprime{$'$}

\myaddress{Department of Mathematics, University of Missouri, Columbia, MO 65211}
\myemail{colemanthomasa@gmail.com}
\myemail{edidind@missouri.edu}

\end{document}